\newtheorem{theorem}{Theorem}[section]
\newtheorem{proposition}[theorem]{Proposition}
\newtheorem{lemma}[theorem]{Lemma}
\newtheorem{corollary}[theorem]{Corollary}
\newtheorem{observation}[theorem]{Observation}
\theoremstyle{definition}
\newtheorem{definition}[theorem]{Definition}
\newtheorem{notation}[theorem]{Notation}
\theoremstyle{remark}
\newtheorem{remark}[theorem]{Remark}
\numberwithin{equation}{section}
\newcommand{\Lip}{\operatorname{Lip}}
\newcommand{\Lipo}{\operatorname{Lip}_1(X,m)}
\newcommand{\dom}{\operatorname{Dom}}
\newcommand{\diam}{\operatorname{diam}}
\newcommand{\id}{\operatorname{Id}}
\newcommand{\graph}{\operatorname{Graph}}
\newcommand{\sspan}{\operatorname{span}}
\newcommand{\eps}{\varepsilon}
\renewcommand{\phi}{\varphi}
\newcommand{\Fogt}{(F\circ\gamma)' (t)}
\newcommand{\Hd}{\mathcal{H}^d}
\newcommand{\Hn}{\mathcal{H}^n}
\newcommand{\Hs}{\mathcal{H}^s}
\newcommand{\Ho}{\mathcal{H}^1}
\newcommand{\N}{\mathbb{N}}
\newcommand{\R}{\mathbb{R}}
\newcommand{\B}{\mathbb{B}}
\newcommand{\p}{\sigma}
\renewcommand{\P}{\mathbb{P}}
\begin{document}

\begin{abstract}
We characterise purely $n$-unrectifiable subsets $S$ of a complete metric space $X$ with finite Hausdorff $n$-measure by studying arbitrarily small perturbations of elements of $\Lipo$, the set of all bounded 1-Lipschitz functions $f\colon X \to \R^m$ with respect to the supremum norm.
In one such characterisation it is shown that, if $S$ has positive lower density almost everywhere, then the set of all $f$ with $\Hn(f(S))=0$ is residual in $\Lipo$.
Conversely, if $E\subset X$ is $n$-rectifiable with $\Hn(E)>0$, the set of all $f$ with $\Hn(f(E))>0$ is residual in $\Lipo$.

These results provide a replacement for the Besicovitch-Federer projection theorem in arbitrary metric spaces, which is known to be false outside of Euclidean spaces.
\end{abstract}

\thanks{}

\date{}
\author{David Bate}
\address{P.O. Box 68 (Gustaf H\"allstr\"omin katu 2b), FI-00014 University of Helsinki}
\email{david.bate@warwick.ac.uk}

\title[Rectifiability and perturbations]{Purely unrectifiable metric spaces and perturbations of Lipschitz functions}

\maketitle
\section{Introduction}

Recall that a subset of Euclidean space is \emph{$n$-rectifiable} if it can be covered, up to a set of $\Hn$ measure zero, by a countable number of Lipschitz (or equivalently $\mathcal C^{1}$) images of $\R^{n}$ (throughout this paper, $\Hn$ denotes the $n$-dimensional Hausdorff measure).
A set is \emph{purely $n$-unrectifiable} if all of its $n$-rectifiable subsets have $\Hn$ measure zero.

Rectifiable and purely unrectifiable sets are a central object of study in geometric measure theory, and a fundamental description of them is given by the Besicovitch-Federer projection theorem \cite{Mattila_1995}.
It states that, for a purely $n$-unrectifiable $S\subset \R^{m}$ with $\Hn(S)<\infty$, for almost every $n$-dimensional subspace $V$ of $\R^{m}$, the orthogonal projection of $S$ onto $V$ has $n$-dimensional Lebesgue measure zero.
The converse statement is an easy consequence of the Rademacher differentiation theorem: if a set is not purely $n$-unrectifiable then it contains a rectifiable subset of positive measure which has at least one $n$-dimensional approximate tangent plane.
Any projection onto a plane not orthogonal to this tangent plane has positive measure and in particular, almost every projection has positive measure.

The past several decades have seen significant activity in analysis and geometry in general metric spaces.
In particular, we mention the works of Ambrosio \cite{Ambrosio_1990}, Preiss and Tišer \cite{preiss-tiser} and Kirchheim \cite{kirchheim-regularity}, which were amongst the first to show that ideas from classical geometric measure theory generalise to an arbitrary metric space, and the later work of Ambrosio and Kirchheim \cite{ambrosio-kirchheim-currents,ambker-rectifiablesets}.
One is quickly lead to ask if a counter part to the Besicovitch-Federer projection theorem holds in this setting.
Of course, in the purely metric setting, one must interpret a \emph{projection} appropriately.
One approach is to assume additional geometric structure on the metric space that leads to an interpretation of a projection.
In this case, some positive, yet specific, results are known \cite{Brothers_1969,Brothers_1969_2,12053010,Hovila_2012}.
On the other hand, for the most general interpretation, which considers linear mappings on an infinite dimensional Banach space containing (an embedding of) the metric space, it is known that the projection theorem completely fails:
continuing from the work of De Pauw \cite{depauw-besicovitch}, Bate, Csörnyei and Wilson \cite{Bate_2017} construct, in any separable infinite dimensional Banach space $X$, a purely 1-unrectifiable set $S$ of finite $\Ho$ measure for which \emph{every} continuous linear $0 \neq T \colon X \to \R$ has $\mathcal L^1(T(S))>0$.
Thus, outside of the Euclidean setting, it is not sufficient to consider only linear mappings to Euclidean space in order to describe rectifiability.

In the metric setting, it is natural to consider \emph{Lipschitz} mappings to Euclidean space.
Indeed, this is exactly the approach taken in Cheeger's generalisation of Rademacher's theorem \cite{cheeger-diff}, and Ambrosio and Kirchheim's generalisation of currents \cite{ambrosio-kirchheim-currents}, to metric spaces.
One of the main results of this paper is to prove a suitable counterpart to the projection theorem in metric spaces for Lipschitz mappings into Euclidean space.

Namely, suppose that $X$ is a complete metric space and $S\subset X$ is purely $n$-unrectifiable with finite $\Hn$ measure and positive lower density at almost every point (see below).
The set of all bounded 1-Lipschitz functions on $X$ into some fixed Euclidean space, equipped with the supremum norm, is a complete metric space and so we can consider a residual (or comeagre) set of 1-Lipschitz functions, and such a set forms a suitable notion of a ``generic'' or ``typical'' 1-Lipschitz function.
One of the main results of this paper to show that a typical 1-Lipschitz function on $X$ maps $S$ to a set of $\Hn$ measure zero.
Conversely, it is shown that a typical 1-Lipschitz image of an $n$-rectifiable subset of $X$ has positive $\Hn$ measure.
These results are new even when $X$ is a Euclidean space.

To describe these results in more detail, recall that a subset $E$ of a metric space is $n$-rectifiable (see Definition \ref{def:rectifiable}) if it can be covered, up to a set of $\Hn$ measure zero, by a countable number of Lipschitz images of \emph{subsets} of $\R^{n}$ (considering subsets of $\R^{n}$ allows us to avoid topological obstructions).
By a result of Kirchheim \cite{kirchheim-regularity} (see Lemma \ref{lem:kirchheim}), we obtain an equivalent definition if we require \emph{biLipschitz} images of subsets of $\R^{n}$.
As for the classical case, a subset $S$ is purely $n$-unrectifiable if all of its $n$-rectifiable subsets have $\Hn$ measure zero, and any metric space $X$ with $\Hn(X)<\infty$ can be decomposed into Borel sets $E$ and $S$ where $E$ is $n$-rectifiable and $S$ is purely $n$-unrectifiable.

In \cite{kirchheim-regularity} a regularity and metric differentiation theory of rectifiable sets is given.
This was extended be Ambrosio and Kirchheim \cite{ambker-rectifiablesets} to a notion of a weak tangent plane to a rectifiable set.
Many properties of rectifiable subsets of Euclidean space can be generalised, with suitable interpretation, to the metric setting using these results.
However, positive results for purely unrectifiable subsets of metric spaces remain elusive.

We will study purely unrectifiable metric spaces by considering Lipschitz images into a Euclidean space.
Given a metric space $X$, let $\Lipo$ be the collection of all bounded 1-Lipschitz functions $f\colon X \to \R^{m}$ equipped with the supremum norm.
A subset of $\Lipo$ is \emph{residual} if it contains a countable intersection of dense open sets.
Since $\Lipo$ is complete, the Baire category theorem states that residual subsets of $\Lipo$ are dense and, since they are closed under taking countable intersections, naturally form a suitable notion of a generic Lipschitz function.

One of the main results of this paper is the following (see Theorems \ref{thm:residual-euclidean-target} and \ref{thm:positive-measure-is-open-dense}).
\begin{theorem}\label{thm:intro-main-theorem-1}
Let $X$ be a complete metric space and $S\subset X$ purely $n$-unrectifiable such that $\Hn(S)<\infty$ and
\begin{equation}
  \label{eq:lower-density}
  \liminf_{r\to 0}\frac{\Hn(B(x,r)\cap S)}{r^n}>0 \quad \text{for } \Hn\text{-a.e. } x\in S.\tag{$*$}
\end{equation}
The set of all $f\in \Lipo$ with $\Hn(f(S)) = 0$ is residual.
Conversely, if $E\subset X$ is $n$-rectifiable with $\Hn(E)>0$, the set of $f\in \Lipo$ with $\Hn(f(E))>0$ is residual.
\end{theorem}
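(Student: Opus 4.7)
For each $k \in \N$ I would set $U_k := \{f \in \Lipo : \Hn_\infty(f(S)) < 1/k\}$, where $\Hn_\infty$ denotes $n$-dimensional Hausdorff content. Since $\Hn$ and $\Hn_\infty$ share the same null sets,
\[
\{f \in \Lipo : \Hn(f(S)) = 0\} \;=\; \bigcap_{k \in \N} U_k,
\]
so it suffices to show each $U_k$ is open and dense. Openness follows from upper semicontinuity of $f \mapsto \Hn_\infty(f(S))$ under uniform convergence: a near-optimal cover of $f(S)$ is turned into a cover of $g(S)$ by a $\|g-f\|_\infty$-thickening of each of its members, increasing the total $n$-th power of diameters only slightly.

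For density, given $f \in \Lipo$ and $\delta, \epsilon > 0$, I would use hypothesis~(*) and a Vitali-type covering theorem to cover $\Hn$-a.e.\ of $S$ by pairwise disjoint balls $B_i = B(x_i, r_i)$ with $\sup_i r_i$ as small as desired and $\sum_i r_i^n \leq C \Hn(S)$, then modify $f$ on each $B_i$ individually. The main obstacle, I expect, is a local \emph{compression lemma}: at $\Hn$-a.e.\ $x \in S$ and every $\eta > 0$, there should exist arbitrarily small $r > 0$ and a 1-Lipschitz $h \colon X \to \R^m$ agreeing with $f$ off $B(x, r)$, with $\|h - f\|_\infty \leq \eta r$ and $\Hn_\infty(h(S \cap B(x, r))) \leq \eta r^n$. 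Granting it, applying the lemma on each $B_i$ with $\eta_i$ tuned so that $\sum_i \eta_i r_i^n < \epsilon$ and gluing the modifications across disjoint balls produces a single 1-Lipschitz $g$ with $\|g - f\|_\infty < \delta$ and $\Hn_\infty(g(S)) < \epsilon$. I would prove the compression lemma by a blow-up contradiction: failure on a set of positive $\Hn$-measure would in the limit $r \to 0$ yield a metric tangent of $S$ together with a 1-Lipschitz map to $\R^m$ of incompressibly large $\Hn_\infty$-image, and Kirchheim-style metric differentiation would then biLipschitz-parameterize a positive measure subset of the tangent by $\R^n$, contradicting pure $n$-unrectifiability.

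\textbf{Rectifiable case.} Assume $\Hn(E) > 0$ and $m \geq n$ (otherwise the statement is vacuous). By Lemma~\ref{lem:kirchheim}, $E$ contains a biLipschitz image $\phi(K)$ with $K \subset \R^n$ compact of positive $\Hn$-measure; since $\{f : \Hn(f(E)) > 0\} \supset \{f : \Hn(f(\phi(K))) > 0\}$, it suffices to treat the case $E = \phi(K)$ with $\phi \colon K \to X$ biLipschitz. For rational $L > 1$ and $k \in \N$ I would define
\[
A_{L,k} := \{f \in \Lipo : \exists\, \text{compact } K' \subset K,\ \Hn(K') \geq 1/k,\ f \circ \phi \text{ is } L\text{-biLipschitz on } K'\}.
\]
Each $A_{L,k}$ is closed: a Hausdorff-convergent subsequence of witnesses $K'_n$ has a compact limit $K' \subset K$ on which the $L$-biLipschitz property passes to the uniform limit of $f_n$, while upper semicontinuity of $n$-dimensional Lebesgue measure on compact subsets of $\R^n$ preserves $\Hn(K') \geq 1/k$. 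Moreover, $f \in A_{L,k}$ forces $\Hn(f(E)) \geq 1/(k L^n) > 0$, so $\bigcup_{L,k} A_{L,k} \subset \{f : \Hn(f(E)) > 0\}$. It therefore suffices to show $\bigcup_{L,k} \operatorname{int}(A_{L,k})$ is dense in $\Lipo$, as this is then an open dense (hence residual) subset of $\{f : \Hn(f(E)) > 0\}$.

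For density, I would extend a suitable 1-Lipschitz rescaling of $\phi^{-1} \colon E \to \R^n$, viewed as taking values in the first $n$ coordinates of $\R^m$, to a 1-Lipschitz $\psi \colon X \to \R^m$. For $t \in (0, 1)$, $T_t(f) := (1-t) f + t \psi$ is 1-Lipschitz with $T_t(f) \to f$ uniformly as $t \to 0$. Projecting $T_t(f) \circ \phi \colon K \to \R^m$ onto the first $n$ coordinates yields $x \mapsto (1-t) F(x) + t c x$ where $F \colon K \to \R^n$ is Lipschitz and $c > 0$. Its Jacobian determinant is a polynomial in $t$ of degree at most $n$ equal to $c^n > 0$ at $t = 1$, and hence vanishes for at most $n$ values of $t$ at each $x$ where $F$ is differentiable. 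Fubini then yields, for a.e.\ $t$, nonvanishing of the Jacobian at a.e.\ $x \in K$, and Lusin's theorem extracts a compact $K' \subset K$ of positive measure on which the Jacobian is bounded below by a positive constant. A further restriction to a subset on which the map is injective renders $T_t(f) \circ \phi$ biLipschitz on a compact set of positive measure with quantitative constants. Since such strict estimates persist under small perturbations of $f$, $T_t(f) \in \operatorname{int}(A_{L', k'})$ for suitable rationals $L', k'$, and letting $t \to 0$ completes the density argument.
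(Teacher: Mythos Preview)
Your overall skeleton (open--dense decomposition, reduction to compact pieces) aligns with the paper, but both halves contain a genuine gap at the decisive step.

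\textbf{Unrectifiable direction.} The compression lemma is the entire content of the theorem, and your blow-up sketch does not prove it. You assert that a tangent $T$ of $S$ carrying a $1$-Lipschitz map to $\R^m$ whose image cannot be compressed would, by ``Kirchheim-style metric differentiation'', contain a biLipschitz image of a piece of $\R^n$. But Kirchheim's differentiation (Lemma~\ref{lem:kirchheim}) concerns Lipschitz maps \emph{from} $\R^n$; it gives no mechanism for producing a biLipschitz chart in an abstract metric tangent from the incompressibility of an \emph{outgoing} map. The implication ``no small-image Lipschitz perturbation $\Rightarrow$ rectifiable piece'' is precisely the hard direction you are trying to prove, now transplanted to the tangent, so the argument is circular. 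The paper supplies this step by an entirely different route: it shows (Theorem~\ref{thm:atilde-summary}, resting on the Alberti-representation rectifiability criterion of~\cite{David_Bate_2017}) that $S$ has an $(n-1)$-dimensional weak tangent field with respect to any Lipschitz $F\colon X\to\R^m$, and then builds the perturbation explicitly and globally (Sections~\ref{s:constructing-perturbations}--\ref{sec:vect-valu-pert}, culminating in Theorem~\ref{thm:general-perturbation}) by contracting the coordinate functionals of $F$ orthogonal to that field. No tangent-space limiting argument is used.

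\textbf{Rectifiable direction.} Your polynomial-in-$t$ Jacobian trick is a nice variant of the paper's density argument (Lemma~\ref{lem:basic-perturb-positive-image}), and the closedness of $A_{L,k}$ is correct. The gap is the claim that $T_t(f)\in\operatorname{int}(A_{L',k'})$ because ``strict estimates persist under small perturbations''. They do not: if $T_t(f)\circ\phi$ is $L$-biLipschitz on $K'$ and $\|g-T_t(f)\|_\infty<\delta$, then $\|g(\phi(x))-g(\phi(y))\|\geq L^{-1}|x-y|-2\delta$, which yields no lower bound once $|x-y|$ is small; the sup-norm topology gives no control on derivatives of $g$. The paper does not attempt to stabilise biLipschitz pieces. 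Instead it proves directly that $\{f:\Hn(f(E))>0\}$ is \emph{open} (Section~\ref{sec:s-is-open}) via a degree-theoretic argument: any continuous map of $\B$ that moves $\partial\B$ by less than $\eps$ covers $B(0,1-\eps)$ (Lemma~\ref{lem:degree}), and a Lipschitz extension transfers this to high-density subsets of balls (Lemmas~\ref{lem:converse-dense-subset-of-ball}--\ref{lem:converse-dense-subset-any-radius}), giving a quantitative positive lower bound on $\Hn(g(E))$ for all $g$ uniformly close to $f$. This topological obstruction is what your outline is missing.
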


The approach to proving this result is very general and we are able to remove the assumption \eqref{eq:lower-density} in various circumstances.
First, if $S$ is a subset of some Euclidean space, then \eqref{eq:lower-density} is not necessary (see Theorem \ref{thm:residual-euclidean-domain}).
Secondly, if $n=1$ or, more generally, $S$ is purely 1-unrectifiable, then \eqref{eq:lower-density} is not necessary (see Theorem \ref{thm:residual-domain-1pu}).
Finally, using a recently announced result of Csörnyei and Jones, it is possible to show that \eqref{eq:lower-density} is never necessary (see Remark \ref{rmk:lower-density-unnec2}).
Further, our approach applies to sets of fractional dimension.
We are able to show that for any subset $S$ of a metric space with $\Hs(S)<\infty$ for $s\not\in \N$, a typical $f\in \Lipo$ has $\Hs(f(S))=0$ (see Theorem \ref{thm:fractional-dimension}).

The conclusion of Theorem \ref{thm:intro-main-theorem-1} is related to the notion of a \emph{strongly unrectifiable} set introduced by Ambrosio and Kirchheim \cite{ambker-rectifiablesets}.
A metric space of finite $\Hn$ measure is said to be strongly $n$-unrectifiable if \emph{every} Lipschitz mapping into some Euclidean space has $\Hn$ measure zero image.
In \cite{ambker-rectifiablesets}, a construction of a strongly $n$-unrectifiable set is given for any $n\in \N$, based on an unpublished work of Konyagin.
An earlier construction of a purely 1-unrectifiable set of positive and finite $\Ho$ measure for which all real valued Lipschitz images have zero measure image was given by Vituškin, Ivanov and Melnikov \cite{MR0154965} (see also \cite{MR1313694}).
Of course, not all purely $n$-unrectifiable sets are strongly $n$-unrectifiable.
However, our main theorem shows that purely $n$-unrectifiable sets are \emph{almost} strongly $n$-unrectifiable, in a suitable sense.

The first step to prove Theorem \ref{thm:intro-main-theorem-1} (or any of the other related theorems mentioned above) is to show that any $S$ satisfying the hypotheses has a $(n-1)$-dimensional \emph{weak tangent field} with respect to \emph{any} Lipschitz $f\colon X \to \R^m$.
That is, for any Lipschitz $f\colon X \to \R^m$, after possibly removing a set of measure zero from $S$, there exists a Borel $\tau \colon S \to G(m,n-1)$ (the Grassmannian of $n-1$ dimensional subspaces of $\R^m$) such that the following holds:
for any 1-rectifiable $\gamma \subset S$, the tangent of $f(\gamma)\subset \R^m$ at a point $f(x)$, $x\in \gamma$, lies in $\tau(x)$ for $\Ho$ almost every $x\in \gamma$.
Thus, although $S$ is an $n$-dimensional set, the tangents of its 1-rectifiable subsets may only span $n-1$ dimensional subspaces.
See Definition \ref{def:atilde}.

The definition of a weak tangent field of a metric space, and its application to studying purely unrectifiable metric spaces, is new.
It is a generalisation of the weak tangent fields introduced by Alberti, Csörnyei and Preiss \cite{acp-structurenullsets,Alberti_2005,Alberti_2011} in their work on the structure of null sets in Euclidean space, where they study $(n-1)$-dimensional tangent fields of subsets of $\R^n$.
It is also related to the decomposability bundle introduced by Alberti and Marchese \cite{MR3494485}.

The construction of a weak tangent field to a purely unrectifiable subset of a metric space relies on the notion of an \emph{Alberti representation} of a metric measure space $(X,d,\mu)$, which is an integral combination 1-rectifiable measures that gives the $\mu$ measure of any Borel subset of $X$ (see Definition \ref{d:alberti-rep}).
Alberti representations were introduced in \cite{Bate_2014} to give several descriptions of those metric measure spaces that satisfy Cheeger's generalisation of Rademacher's theorem \cite{cheeger-diff}.
However, rather than their differentiability properties, we will instead be interested in the additional geometric structure that an Alberti representation provides a metric measure space.

Specifically, in Theorem \ref{thm:general-alberti-pu}, for any Lipschitz $f\colon X \to \R^m$, $S\subset X$ and $n\leq m$, we give a decomposition $S = A \cup S'$ such that $\mu\llcorner A$ has $n$ \emph{independent} Alberti representations (see Definition \ref{d:alberti-direction}) and such that $S'$ has a $(n-1)$-dimensional weak tangent field with respect to $f$.
If $S$ satisfies $\Hn(S)<\infty$ we can apply this with $\mu=\Hn$ and if in addition $S$ satisfies \eqref{eq:lower-density}, the main result of \cite{David_Bate_2017} concludes that $A$ is in fact $n$-rectifiable. 
Thus, if $S$ is purely $n$-unrectifiable, we must have $\Hn(A)=0$ and hence construct the required weak tangent field of $S$.
For subsets of Euclidean space, we will instead use the results of Alberti and Marchese \cite{MR3494485} and De Philippis and Rindler \cite{dephilippisrindler} to conclude that $\Hn(A)=0$ without assuming \eqref{eq:lower-density}.

From this point on, the proof of Theorem \ref{thm:intro-main-theorem-1} does not use the hypothesis that $S$ is purely unrectifiable and only relies upon the definition of a weak tangent field.
The main part of the argument is to construct a dense set of $\Lipo$ that maps $S$ to a set of small $\Hn$ measure.
Given $f\in \Lipo$ and $\tau$ the weak tangent field of $S$ with respect to $f$, the idea is to construct a perturbation of $f$ by locally contracting $f$ in all directions orthogonal to $\tau$.
Since $\tau$ takes values in $(n-1)$-dimensional subspaces, it is possible to reduce the $\Hn$ measure of the image of $f$ to an arbitrarily small value.
Further, since $\tau$ is a weak tangent field, this can be realised as an arbitrarily small perturbation of $f$ (see Theorem \ref{thm:general-perturbation}).
Of course, it is essential that our construction does not increase the Lipschitz constant, so that the constructed perturbation of $f$ belongs to $\Lipo$.

When considering perturbations of $\R^{m}$ valued mappings of a compact metric space $X$, it is also natural to equip the image with the supremum norm.
Indeed, for any $\eps>0$, if $x_{1},\ldots,x_{m(\eps)}$ is a maximal $\eps$-net in $X$, then in a similar fashion
to the Kuratowski embedding into $\ell_{\infty}$, the mapping $X \to \ell_\infty^{m(\eps)}$ defined by
\begin{equation*}
  x \mapsto (d(x,x_{1}),d(x,x_{2}),\ldots ,d(x,x_{m(\eps)})),
\end{equation*}
is  1-Lipschitz and perturbs relative distances in $X$ by at most $\eps$.
If $X$ has a weak tangent field, then by constructing an arbitrarily small perturbation of this map as above, we obtain a mapping that perturbs all distances in $X$ by an arbitrarily small amount that also reduces $\Hn(X)$ to an arbitrarily small amount.

If this is done naively, then the Lipschitz constant of this perturbation depends on $\eps$ (due to the comparison of the Euclidean and supremum norms in $\R^{m(\eps)}$).
If, however, we take the norm into consideration when constructing this perturbation, it is possible to construct it so that the Lipschitz constant increases by a fixed factor depending only upon $n$.
This leads to the following theorem (see Theorems \ref{thm:metric-perturbation} and \ref{thm:metric-converse}).
\begin{theorem}\label{thm:intro-main-theorem-2}
Let $X$ be a compact purely $n$-unrectifiable metric space with finite $\Hn$ measure that satisfies \eqref{eq:lower-density}.
For any $\eps>0$ there exists an $L(n)$-Lipschitz $\p_{\eps}\colon X \to \ell_\infty^{m(\eps)}$ such that $\Hn(\p_\eps(X))<\eps$ and
\begin{equation}\label{eq:intro-perturb-points}
  |d(x,y)-\|\p_{\eps}(x)-\p_{\eps}(y)\|_{\infty}|<\eps \text{ for each } x,y\in X.
\end{equation}

Conversely, if $X$ is $n$-rectifiable with $\Hn(X)>0$, then
\[\inf_{L\geq 1}\lim_{\eps\to 0} \inf \Hn(\p_{\eps}(X))>0,\]
where the second infimum is taken over all $L$-Lipschitz $\p_\eps\colon X \to \ell_\infty$ satisfying \eqref{eq:intro-perturb-points}.
\end{theorem}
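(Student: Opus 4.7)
The plan is to combine the Kuratowski-type embedding described in the introduction with a sup-norm version of the weak-tangent-field perturbation argument that underlies Theorem~\ref{thm:intro-main-theorem-1}, and to handle the converse by a soft argument exploiting the biLipschitz parametrisations coming from rectifiability.

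\textbf{Forward direction.} Fix $\eps > 0$. By compactness of $X$, pick a maximal $\eps$-net $\{x_1,\ldots,x_{m(\eps)}\}$ and form the Kuratowski-type embedding $\iota_\eps \colon X \to \ell_\infty^{m(\eps)}$, $\iota_\eps(x) = (d(x,x_i))_i$. Each coordinate is $1$-Lipschitz, so $\iota_\eps$ is $1$-Lipschitz in the sup norm, and for any $x,y \in X$, choosing $i$ with $d(x,x_i) < \eps$ shows that $|d(x,y) - \|\iota_\eps(x) - \iota_\eps(y)\|_\infty| \leq 2\eps$. Now invoke a sup-norm analogue of Theorem~\ref{thm:general-perturbation} applied to $\iota_\eps$: using the hypotheses on $X$, construct a weak tangent field $\tau \colon X \to G(m(\eps), n-1)$ for $\iota_\eps$, and perturb $\iota_\eps$ to $\p_\eps$ with $\|\p_\eps - \iota_\eps\|_\infty < \eps$, $\Hn(\p_\eps(X)) < \eps$, and $\Lip(\p_\eps) \leq L(n)$ in the sup norm. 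The construction mirrors the Euclidean template: partition $X$ (up to a set of arbitrarily small $\Hn$ measure) into pieces $X_j$ on which $\tau$ is nearly constant, equal to some $V_j$, and on each piece compose with a sup-norm contraction along directions transverse to $V_j$ to drive the transverse width, and hence the $\Hn$ measure of the image, down. The triangle inequality then yields $|d(x,y) - \|\p_\eps(x) - \p_\eps(y)\|_\infty| \leq 2\eps + 2\|\p_\eps - \iota_\eps\|_\infty \leq 4\eps$, and rescaling $\eps$ by a fixed constant produces \eqref{eq:intro-perturb-points}.

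\textbf{Converse direction.} Suppose $X$ is $n$-rectifiable with $\Hn(X) > 0$. By Lemma~\ref{lem:kirchheim}, choose a compact $A \subset \R^n$ with $\mathcal{L}^n(A) > 0$ and a $C$-biLipschitz $\phi \colon A \to E \subset X$. For any $L$-Lipschitz $\p_\eps \colon X \to \ell_\infty$ satisfying \eqref{eq:intro-perturb-points}, set $g_\eps = \p_\eps \circ \phi$; then $g_\eps$ is $LC$-Lipschitz and satisfies $\|g_\eps(u)-g_\eps(v)\|_\infty \geq |u-v|/C - \eps$, so at scales above $2C\eps$ it is $(1/C, LC)$-biLipschitz. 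Applying Kirchheim's area formula to $g_\eps$ on $A$, and using that the $\eps$-isometric condition forces the metric differential of $g_\eps$ to have lower bound $1/C - o_\eps(1)$ on average as $\eps \to 0$, gives $\Hn(\p_\eps(X)) \geq \Hn(g_\eps(A)) \geq c_n \mathcal{L}^n(A)/C^n - o_\eps(1)$. This lower bound is independent of $L$, so the double infimum is strictly positive.

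\textbf{Main obstacle.} The hard part is the sup-norm perturbation step in the forward direction: the Lipschitz constant of the perturbation must be bounded by a constant $L(n)$ depending only on $n$, not on the ambient dimension $m(\eps)$ of the target, which diverges as $\eps \to 0$. Naively performing the orthogonal contraction of Theorem~\ref{thm:intro-main-theorem-1} and converting between Euclidean and sup norms on $\R^{m(\eps)}$ would lose a factor of $\sqrt{m(\eps)}$. Overcoming this by building the contraction directly in the sup norm---exploiting that coordinate-wise contractions in $\ell_\infty^{m(\eps)}$ are automatically $1$-Lipschitz, and choosing the transverse directions to be well-aligned with the coordinate axes up to a fixed $n$-dependent cost---is the essential technical content of Theorem~\ref{thm:metric-perturbation}.
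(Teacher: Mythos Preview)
Your forward direction is essentially the paper's approach. The Kuratowski embedding is Lemma~\ref{lem:compact-perturbation-l_infty}, the weak tangent field comes from Theorem~\ref{thm:atilde-summary}, and the ``sup-norm analogue'' you invoke is Theorem~\ref{thm:general-perturbation} applied with $V=\ell_\infty^m$. You correctly isolate the key obstacle, and your remark about coordinate-wise contractions captures the fact that the unconditional constant $K_u$ of the standard basis of $\ell_\infty^m$ equals $1$; combined with a Kadets--Snobar projection onto the $(n-1)$-dimensional subspace $W$ (norm $\le\sqrt{n-1}$), this yields $\tilde K(\ell_\infty^m,n-1)\le\sqrt{n-1}+2$ independently of $m$ (Observation~\ref{obs:tildek-d0}).

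Your converse argument, however, has a genuine gap. You write that ``the $\eps$-isometric condition forces the metric differential of $g_\eps$ to have lower bound $1/C - o_\eps(1)$'', but this is false for any fixed $\eps>0$: the inequality $\|g_\eps(u)-g_\eps(v)\|\ge |u-v|/C-\eps$ is vacuous once $|u-v|<C\eps$, so it places no constraint whatsoever on $md(g_\eps,x)$, which is an infinitesimal quantity. For each fixed $\eps$, the map $g_\eps$ could have degenerate metric differential (hence zero Jacobian) almost everywhere, and Kirchheim's area formula would then give no lower bound on $\Hn(g_\eps(A))$. The phrase ``on average as $\eps\to 0$'' does not repair this: the $\liminf$ requires a uniform lower bound for each small $\eps$, and there is no compactness available to pass to a limiting map.

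The paper's argument for the converse (Theorem~\ref{thm:metric-converse}) is fundamentally different and topological. After the biLipschitz parametrisation $h\colon E\to S$, one applies Vitali to cover most of $E$ by disjoint balls in which $E$ has density $>1-\eps_2$. On each such ball the composition $\p_\eps\circ h$ is Lipschitz-extended to a map of the full ball into $\ell_\infty$; one then builds, from an $\eps$-net on $\partial\B$, a Lipschitz retraction back to $\R^n$ and applies a Brouwer/degree argument (Lemma~\ref{lem:degree}) to show the image covers a ball of definite radius. The Lipschitz bound on $\p_\eps$ enters by controlling how much measure is lost when passing from the extension back to the dense subset $E\cap B_i$ (Lemma~\ref{lem:converse-dense-subset-of-ball}). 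This replaces your infinitesimal area-formula step with a finite-scale topological one, which is exactly what the large-scale hypothesis \eqref{eq:intro-perturb-points} can support.
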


Simple examples show that the converse statement is false if the Lipschitz constant is unbounded as $\eps \to 0$.
Thus, it is essential to obtain an absolute bound on the Lipschitz constant in the first half of the theorem.
As for Theorem \ref{thm:intro-main-theorem-1}, controlling the Lipschitz constant in this way requires careful consideration throughout the argument.

The assumption \eqref{eq:lower-density} can be removed under the same conditions as before, and we have a corresponding statement for fractional dimensional sets (see also Theorem \ref{thm:metric-perturbation}).

Further, if $X$ is a Banach space with an unconditional basis (see Section \ref{sec:pert-sets-uncond}), it is possible to realise $\p_{\eps}$ as a genuine perturbation of $X$.
That is, $\p_\eps\colon X\to X$ with $\|\p_{\eps}(x)-x\|<\eps$ for each $x\in X$ (see Theorem \ref{thm:banach-space-perturbation}).
This is a significant generalisation of a result of Pugh \cite{pugh16:local_besic_feder_projec_theor}, who proved the result (and its converse) for Ahlfors regular subsets of Euclidean space.
Generalising this paper was the initial motivation for the work presented here.
Note however, that Pugh's proof heavily depends on the Besicovitch-Federer projection theorem, and so our approach is entirely new.
Related is the work of Gałęski \cite{Ga_ski_2017} which finds an arbitrarily small Lipschitz perturbation with measure zero image, but at the sacrifice of any control over the Lipschitz constant.

The results that perturb a purely unrectifiable subset of a Banach space in this way immediately show the existence of a dense subset of all Lipschitz functions $f \colon X \to \R^m$ that reduce the Hausdorff measure of $X$ to an arbitrarily small amount (or to zero in the case of Gałęski).
Indeed, this follows by simply pre-composing a suitable Lipschitz extension of $f$ by such a $\p_\eps$.
However, obtaining a result for \emph{residual} subsets would require $\p_\eps$ to be 1-Lipschitz.
It is not clear how to do this in general and so we primarily consider perturbing an arbitrary Lipschitz function defined on a metric space from the outset.

We summarise our construction (see Theorem \ref{thm:general-perturbation}) of a perturbation of an arbitrary Lipschitz function $F\colon X \to \R^m$, with respect to $S\subset X$ that has a weak tangent field with respect to $F$.
For simplicity, suppose that the tangent field is constant and equal to $W \in G(m,n-1)$.

Given a linear $T\colon \R^{m} \to \R$, we first construct a perturbation $\p$ of $T\circ F$ such that, in a small neighbourhood of $S$,
\begin{equation}\label{eq:intro-project-to-subspace}
|\p(x)-\p(y)| \leq \|\pi(F(x)-F(y))\| +\eps d(x,y),
\end{equation}
for $\pi$ the orthogonal projection onto $W$ and $\eps>0$ arbitrary (see Proposition \ref{prop:basic-perturbation}).
It is easy to see that we can only do this if $S$ has a weak tangent field:
if $\gamma \subset S$ is a rectifiable curve for which $(F\circ \gamma)' \not \in V$ almost everywhere,
then $\p(\gamma)$ is a curve that is much shorter than $F(\gamma)$ (becoming shorter the further that $(F\circ \gamma)'$ lies away from $W$ on average).
Thus, $\p$ would not be an arbitrarily small perturbation of $F$, since the end points of $\gamma$ are mapped much closer together under $\p$ than $F$.
With a standard approximation argument, it is possible to reach a similar conclusion if $\gamma$ is simply 1-rectifiable, rather than a rectifiable curve.
The construction given in Section \ref{s:constructing-perturbations} shows that this condition is sufficient.
It is motivated by a similar construction in \cite{Bate_2014}, though it must be modified to fit the present needs.

We then apply the previous step to coordinate functionals of $F$.
Specifically, take a basis $B$ of $\R^{m}$ that contains $(n-1)$ vectors in $W$, and perturb the coordinate functionals of $F$ in the $m-(n-1)$ directions of $B$ not in $W$, leaving the other $n-1$ directions unchanged.
Since $W$ is $n-1$ dimensional, \eqref{eq:intro-project-to-subspace} implies that $\Hn(\p(S))$ can be made arbitrarily small.

In this construction, the Lipschitz constant of $\p$ depends on the choice of $B$.
As mentioned above, for all of our main results, we must maintain a strict control of this Lipschitz constant.
When the image of $F$ is equipped with the Euclidean norm, the natural choice of an orthonormal basis for $B$ is correct.
However, when the image of $F$ is equipped with a non-Euclidean norm, a more careful choice is required.
Therefore, before concluding with the final step of the construction, we analyse the target norm for a suitable collection of coordinate functionals (see Definition \ref{def:tilde-k}).

As mentioned above, the converse statements are false if the Lipschitz constant of the considered perturbations is not uniformly bounded.
In our proofs of the converse statements, the uniform bound allows us to modify topological arguments to the setting of rectifiable sets.
For example, a simple topological argument shows that any continuous mapping of the unit ball in Euclidean space to itself that perturbs the boundary by a small amount has positive measure image (see Lemma \ref{lem:degree}).
If this mapping is Lipschitz, then the same is true if the entire ball is replaced by an arbitrary subset with sufficiently large measure (depending only upon the Lipschitz constant of the map, see Lemma \ref{lem:converse-dense-subset-of-ball}).
Using Kirchheim's description of rectifiable sets \cite{kirchheim-regularity} (see Lemma \ref{lem:kirchheim}), this can be used to deduce the required statements about Lipschitz images of rectifiable sets.

This topological observation also leads to the following consequence of Theorem \ref{thm:intro-main-theorem-1}:
any curve (i.e. continuous image of an interval) with distinct endpoints and $\sigma$-finite $\Ho$-measure contains a rectifiable subset of positive measure.
Higher dimensional statements are also true, see Theorem \ref{thm:topological-consequences}.
Shortly after the first preprint of this article appeared, David and Le Donne \cite{1807.02664} used Theorem \ref{thm:intro-main-theorem-1} to give a stronger result than Theorem \ref{thm:topological-consequences} that only involves topological dimension.
In Euclidean space, these statements follow, in a similar fashion, from the Besicovitch-Federer projection theorem.

The structure of this paper is as follows.

In Section \ref{s:alberti-representations-rectifiability} we recall the definition of an Alberti representation of a metric measure space and some of their basic properties given in \cite{Bate_2014}.
We give a class of subsets of a metric measure space, the sets with a weak tangent field (see Definition \ref{def:atilde}), that determine when a metric measure space has many Alberti representations.
We also relate Alberti representations to rectifiability of metric spaces.
In particular we will use the main result from \cite{David_Bate_2017} that determines when a metric measure space with many Alberti representations is rectifiable.
In particular, these results show that purely unrectifiable metric spaces have a weak tangent field (see Theorem \ref{thm:atilde-summary}).

Section \ref{s:constructing-perturbations} we construct a perturbation of real valued functions.
Specifically, given $F\colon X \to \R^m$ Lipschitz and $S\subset X$ with a $d$-dimensional weak tangent field with respect to $F$, we construct a perturbation $\p$ of $T\circ F$, where $T\colon \R^{m}\to \R$ is an arbitrary linear function.
In a small neighbourhood of $S$, these perturbations satisfy \eqref{eq:intro-project-to-subspace}.
The results in this section use ideas from \cite{Bate_2014}, but they are modified to fit our requirements.

In Subsection \ref{sec:prop-finite-dimens} we gather properties of an arbitrary finite dimensional Banach space $V$ and use them to construct a collection of coordinate functionals of $V$.
These coordinate functionals are well behaved with respect to a given $d$ dimensional subspace $W$ of $V$.
Then, in Subsection \ref{sec:vector-constructions}, we apply the real valued construction of the previous section to each of these coordinate functionals to obtain a perturbation $\p$ of $F$.
The preliminary analysis of $V$ given in Subsection \ref{sec:prop-finite-dimens} results in a number $\tilde K(V,d)$ (see Definition \ref{def:tilde-k}).
Our construction is such that $\Lip \p$ is at most $\tilde K(V,d) \Lip F$.

We will see that $\tilde K(\R^m,d)=1$ for any $m,d\in\N$ and so, given a function in $\Lipo$, our construction produces a function in $\Lipo$.
This allows us to show that certain subsets of $\Lipo$ are dense and hence form residual sets.
This is done in Section \ref{sec:typical}.

This concludes one direction of the proof of our main theorems.
In Section \ref{sec:proof-main-theorems} we combine the results of the previous sections and state and prove these theorems.
Our constructions regarding coordinate functionals of finite dimensional Banach spaces are related to concepts from infinite dimensional geometric measure theory.
In Section \ref{sec:pert-sets-uncond} we highlight these relationships and use them to deduce a perturbation theorem for purely unrectifiable subsets of Banach spaces with an unconditional basis.

Finally, we prove various results regarding rectifiable subsets of a metric space in Section \ref{sec:converse}.

\subsection{Notation}
Throughout this paper, $(X,d)$ will denote a complete metric space.
Since any Lipschitz function may be uniquely extended to the completion of its domain, this is a natural assumption in our setting and simply alleviates issues arising from measurability.
For example, it implies that, for any $\Hs$ measurable $S\subset X$ with $\Hs(S)<\infty$, $\Hs \llcorner S$ is a finite Borel regular measure on the closure of $S$, a complete and separable metric space.
In particular, this implies that $\Hs \llcorner S$ is inner regular by compact sets.

Here and throughout, $\Hs$ will denote the $s$-dimensional Hausdorff measure on $X$ defined,
for $S\subset X$ and $s,\delta>0$, by
\begin{equation*}
  \Hs_\delta(S) = \inf\left\{\sum_{i\in\N} \diam(S_i)^s : S \subset \bigcup_{i\in\N} S_i,\ \diam(S_i)\leq \delta\right\}
\end{equation*}
and $\Hs(S) = \lim_{\delta\to 0} \Hs_\delta(S)$.

For $x\in X$ and $r>0$, $B(x,r)$ will denote the open ball of radius $r$ centred on $x$.
If $S\subset X$, $B(S,r)$ will denote the open $r$-neighbourhood of $S$ and $\overline S$ the closure of $S$.
We will write $d(x,S)$ for the infimal distance between $x$ and points of $S$.

For $(Y,\rho)$ a metric space and $L\geq 0$, a function $f\colon X \to Y$ is said to be \emph{$L$-Lipschitz} (or simply \emph{Lipschitz} if such an $L$ exists) if
\[\rho(f(x),f(y)) \leq L d(x,y)\]
for each $x,y\in X$.
We let $\Lip f$ be the least $L\geq 0$ for which $f$ is $L$-Lipschitz.
Further, if $f$ is Lipschitz, we let
\[\Lip(f,x)=\limsup_{y\to x}\frac{\rho((f(x),f(y)))}{d(x,y)},\]
the \emph{pointwise Lipschitz constant} of $f$.
We will write $\Lip(f,\cdot)$ for the function $x\mapsto \Lip(f,x)$.

We will require results from the theory of \emph{metric measure spaces}: complete metric spaces $(X,d)$ with a $\sigma$-finite Borel regular Radon measure $\mu$.
However, our only application will be to the metric measure spaces of the form $(X,d,\Hs\llcorner S)$, for $S\subset X$ $\Hs$ measurable.

We define a rectifiable set as follows.
\begin{definition}\label{def:rectifiable}
For $n\in\N$, a $\Hn$ measurable $E\subset X$ is \emph{$n$-rectifiable} if there exists a countable number of Lipschitz $f_i\colon A_i \subset \mathbb R^n \to X$ such that
\begin{equation*}
\Hn\left(E\setminus \bigcup f_i(A_i)\right)=0.
\end{equation*}
A $\Hn$ measurable $S\subset X$ is \emph{purely $n$-unrectifiable} if $\Hn(S\cap E)=0$ for every $n$-rectifiable $E\subset X$.
\end{definition}
Since $X$ is complete, an equivalent definition of rectifiable sets is obtained if we require the $A_i$ to be compact.
If $X$ is a Banach space, then by obtaining a Lipschitz extension of each $f_i$ (see \cite{Johnson_1986}), an equivalent definition is obtained by requiring each $A_i=\R^n$.

We write $G(d,m)$ for the Grassmannian of $d$-dimensional subspaces of $\R^m$.
We may sometimes write $W\leq V$ to denote that $W$ is a subspace of $V$.

Throughout this paper, the notation $\|.\|$ will refer to the intrinsic norm of a Banach space, be it the Euclidean norm on $\R^m$, the supremum norm on a set of bounded functions, the operator norm on a set of bounded linear functions or the norm of some other arbitrary Banach space.
Whenever this notation is used, the precise norm in question should be clear from the context.

\subsection{Acknowledgements}
This work was supported by the Academy of Finland projects 308510 and 307333, and the University of Helsinki project 7516125.

I would like to thank Pertti Mattila for bringing the work of Pugh to my attention and for comments on the first preprint of this article.
I would also like to thank Bruce Kleiner and David Preiss for comments that lead me to an enlightenment on the final presentation of this work.
I am grateful to Tuomas Orponen for useful discussions during the preparation of this article.
Finally, I would like to thank the referee for suggestions on how to improve the readability of the paper.

\section{Alberti representations, rectifiability and weak tangent fields} \label{s:alberti-representations-rectifiability}
We now recall the definition of an Alberti representation of a metric measure space introduced in \cite{Bate_2014}, and give conditions that ensure the existence of many \emph{independent} Alberti representations.
Following this, we give various conditions under which a metric measure space with many independent Alberti representations is in fact rectifiable.
By combining these, we develop the ideas into the notion of a \emph{weak tangent field} of a purely unrectifiable subset of a metric measure space.

\subsection{Alberti representations of a measure}
An Alberti representation of a measure is an integral representation by rectifiable curves.
One important point is that we allow these curves to be Lipschitz images of \emph{disconnected} subsets of $\R$.
This allows us to consider all metric spaces, regardless of obvious topological obstructions.
\begin{definition}\label{d:alberti-rep}
  Let $(X,d)$ be a metric space.  We define the set of \emph{curve fragments} of $X$ to be the set
  \[\Gamma(X) := \{\gamma \colon \dom\gamma \subset \R \to X : \dom\gamma \text{ compact, } \gamma \text{ biLipschitz}\}.\]
  We equip $\Gamma(X)$ with the Hausdorff metric induced by the inclusion
  \[\gamma\in \Gamma(X) \mapsto \graph \gamma \subset \R \times X.\]

  An \emph{Alberti representation} of a metric measure space $(X,d,\mu)$ consists of a probability measure $\P$ on $\Gamma(X)$ and, for each $\gamma\in \Gamma(X)$, a measure $\mu_{\gamma}\ll \Ho\llcorner\gamma$ such that
  \[\mu(B) = \int_\Gamma \mu_\gamma(B) d\P(\gamma)\]
  for each Borel $B\subset X$.
  Integrability of the integrand is assumed as a part of the definition.
\end{definition}

Alberti representations first appeared in the generality of metric spaces in \cite{Bate_2014}, where they were used to give several characterisations of Cheeger's generalisation of Rademacher's theorem.
The relationship between Alberti representations and differentiability can be seen in the following observation.

Suppose that $\gamma\in \Gamma(X)$ and $F\colon X \to \R^m$ is Lipschitz.
Then $F\circ \gamma \colon \dom\gamma \to \R^m$ and so it is differentiable at almost every point of $\dom\gamma$.
Therefore, if $\mu$ has an Alberti representation, for $\mu$ almost every $x$, there exists a curve fragment $\gamma \ni x$ for which $(F\circ\gamma)'(\gamma^{-1}(x))$ exists.
That is, $F$ has a \emph{partial derivative} at $x$.

Alternatively, although a curve fragment may not have a tangents in $X$, there exist many tangents after mapping the fragment to a Euclidean space.
This allows us to distinguish ``different'' Alberti representations: Alberti representations will be considered different if we can find a single Lipschitz map to Euclidean space that distinguishes their tangents.

\begin{definition}
  \label{def:cones}
  For $w\in \ell_2^m$ and $0<\theta<1$ define the \emph{cone centred on $w$ of width $\theta$} to be
  \[C(w,\theta):= \{v\in \R^m: v\cdot w \geq (1-\theta)\|v\|\}.\]
  We say that cones $C_1,\ldots, C_n \subset \ell_2^m$ are \emph{independent} if, for any choice of $w_i \in C_i \setminus \{0\}$ for each $1\leq i \leq n$, the $w_i$ are linearly independent.
  
  Now let $V$ be a finite dimensional Banach space.
  For $W\leq V$ a subspace, we define the ``conical complement'' of $W$ to be
  \[E(W,\theta):= \{v\in V : d(v,W) \geq (1-\theta)\|v\|\}.\]
  
  Note that both of the above sets become wider as $\theta\to 1$.
  Whilst sets of either form may be considered ``cones'', we will reserve this name, and the notation ``$C$'', for sets of the first type.
\end{definition}

\begin{definition}\label{d:alberti-direction}
  Let $(X,d)$ be a metric space, $V$ a finite dimensional Banach space, $F\colon X \to V$ Lipschitz and $D$ a set of the form $C(w,\theta)$ (if $V=\ell_2^m$) or $E(W,\theta)$.
  We say that a curve fragment $\gamma \in \Gamma$ is in the \emph{$F$-direction} of $D$ if
  \[\Fogt \in D\setminus \{0\}\]
  for $\Ho$-a.e.\ $t\in \dom\gamma$.
  Further, an Alberti representation $(\P,\{\mu_\gamma\})$ of $(X,d,\mu)$ is in the $F$-direction of a cone $C$ if $\P$-a.e.\ $\gamma\in \Gamma$ is in the $F$-direction of $C$.

  Finally, Alberti representations $\mathcal A_1, \ldots, \mathcal A_n$ of $(X,d,\mu)$ are \emph{independent} if there exist an $m\in \N$, a Lipschitz $F\colon X\to \mathbb \ell_2^m$ and independent cones $C_1, \ldots, C_n \subset \ell_2^m$ such that $\mathcal A_i$ is in the $F$-direction of $C_i$ for each $1\leq i \leq n$.
  In this case, we say that the Alberti representations are $F$-independent.
\end{definition}

In the definition of independent Alberti representations, we could permit a Lipschitz function taking values in any finite dimensional Banach space. However, its rather straightforward to see that this definition is equivalent to the one given above (up to a countable decomposition of the support of the measure).
Thus, for compatibility with \cite{Bate_2014}, we will only consider a $\ell_2^m$ valued function.
We do, however, require the definition of $E(W,\theta)$ for arbitrary finite dimensional Banach spaces.
For the remainder of this section, we will write $\R^m$ for $\ell^m_2$.

This definition of independent Alberti representations differs slightly from the definition given in \cite{Bate_2014}.
There, the definition requires the dimension of the image ($m$) and the number of Alberti representations ($n$) to agree.
However, it is easy to see that these definitions are equivalent.
Indeed, $F\colon X \to \R^m$ is Lipschitz and Alberti representations $\mathcal A_1, \ldots, \mathcal A_n$ are in the $F$-direction of $C(w_1,\theta),\ldots, C(w_n,\theta)$, let $\pi$ be the orthogonal projection onto the span of the $w_i$.
Then it is easy to check that the $\mathcal A_i$ are in the $\pi\circ F$ direction of the $\pi(C_i)$ and that the $\pi(C_i)$ are independent cones.

Although it is a small change to the definition, considering a smaller number of Alberti representations than the dimension of the image is required for us to develop the notion of a weak tangent field of a metric space.

One of the main results of \cite{Bate_2014} gives an equivalence between Cheeger's generalisation of Rademacher's theorem and the existence of many independent Alberti representations of a metric measure space.
Further, independent to interests in differentiability, an Alberti representation is a new concept to provide additional structure to a metric measure space.
In subsection \ref{ss:alberti-rect} below, we will give various results that show when a metric measure space $(X,d,\Hn)$ with $n$-independent Alberti representations is, in fact, $n$-rectifiable.
For the rest of this subsection, we will develop conditions that ensure that a metric measure space has many independent Alberti representations, so that these results can be applied.

First suppose that $w\in \R^n$, $F\colon X \to \R^m$ is Lipschitz and $\mu$ has an Alberti representation in the $F$-direction of $C(w,\theta)$.
Then necessarily, any Borel $S\subset X$ with $\Ho(\gamma\cap S)=0$ for each $\gamma \in \Gamma$ in the $F$-direction of $C(w,\theta)$, must have $\mu(S)=0$.
This condition is also sufficient for the existence of an Alberti representation.
\begin{lemma}[\cite{Bate_2014}, Corollary 5.8]\label{lem:one-rep}
  Let $(X,d,\mu)$ be a metric measure space, $F\colon X \to \R^m$ Lipschitz and $C\subset \R^m$ a cone.
  There exists a Borel decomposition $X=A \cup S$ such that $\mu\llcorner A$ has an Alberti representation in the $F$-direction of $C$ and $S$ satisfies $\Ho(\gamma\cap S)=0$ for each $\gamma\in \Gamma$ in the $F$-direction of $C$.
\end{lemma}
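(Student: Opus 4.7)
The plan is to construct $A$ as a maximal element of the family of Borel sets carrying an Alberti representation in the prescribed direction, and then to verify that the complement $S:=X\setminus A$ is automatically null for every curve fragment in the $F$-direction of $C$. Let $\mathcal{F}$ denote the family of Borel sets $A\subset X$ for which $\mu\llcorner A$ admits an Alberti representation in the $F$-direction of $C$. The first step is to show that $\mathcal{F}$ is closed under countable unions: given $A_n\in\mathcal{F}$ with representations $(\P_n,\{\mu_\gamma^n\})$, replace $A_n$ by the disjointified pieces $B_n=A_n\setminus\bigcup_{k<n}A_k$, restrict each representation to $B_n$, and glue them into a single representation of $\mu\llcorner\bigcup_n B_n$ via a weighted convex combination $\P=\sum_n c_n\P_n$ with weights chosen so that $\mu(E\cap\bigcup_n B_n)=\int\mu_\gamma(E)\,d\P(\gamma)$ for every Borel $E$.

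After reducing to the finite-measure case via the $\sigma$-finiteness of $\mu$, set $\alpha:=\sup\{\mu(A):A\in\mathcal{F}\}$ and pick $A_n\in\mathcal{F}$ with $\mu(A_n)\to\alpha$. By the previous step, $A:=\bigcup_n A_n$ itself lies in $\mathcal{F}$ and attains the supremum, so the decomposition $X=A\cup S$ already carries an Alberti representation on $A$. It then suffices to show that $\Ho(\gamma\cap S)=0$ for every $\gamma\in\Gamma(X)$ in the $F$-direction of $C$, which I would argue by contradiction: assuming some $\gamma_0$ in the $F$-direction of $C$ satisfies $\Ho(\gamma_0\cap S)>0$, I would produce a Borel $B\subset S$ with $\mu(B)>0$ on which $\mu\llcorner B$ itself admits an Alberti representation in the $F$-direction of $C$. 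Then $A\cup B\in\mathcal{F}$ with $\mu(A\cup B)>\alpha$, contradicting maximality.

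The construction of $B$ is the genuine obstacle. A single $\gamma_0$ contributes only $\Ho$ mass along its image and need not be absolutely continuous with respect to $\mu$, so one cannot simply take $\P$ to be a Dirac mass at $\gamma_0$. The idea is instead to bundle $\gamma_0$ into an entire parametric family of curve fragments in the $F$-direction of $C$. Since $\Gamma(X)$ with the Hausdorff-graph metric is Polish, a measurable selection argument yields a probability measure $Q$ supported on curve fragments in the $F$-direction of $C$ whose trace $\nu_Q(E):=\int \Ho(\gamma\cap E)\,dQ(\gamma)$ is a finite Borel measure on $X$ assigning positive mass to $S$. A Lebesgue decomposition of $\nu_Q$ against $\mu$ then, on the carrier of its $\mu$-absolutely-continuous part, produces via Radon--Nikodym the densities that assemble into measures $\mu_\gamma\ll\Ho\llcorner\gamma$ realising the required Alberti representation of $\mu\llcorner B$ for some Borel $B\subset S$ with $\mu(B)>0$. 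The key technical hurdle is arranging that $\nu_Q$ has a non-trivial $\mu$-absolutely-continuous component concentrated in $S$; this is the measure-theoretic content inherited from the construction in \cite{Bate_2014}.
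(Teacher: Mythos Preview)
The paper does not prove this lemma; it is quoted from \cite{Bate_2014}, Corollary~5.8, and used as a black box. So there is no in-paper argument to compare against, and I evaluate your proposal on its own merits.

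There is a genuine gap in the contradiction step, and it stems from a mismatch between the two measures in play. Your maximality argument selects $A$ only up to a $\mu$-null set, whereas the required property of $S=X\setminus A$ is a statement about $\Ho$-measure along curves, which is not invariant under $\mu$-null modification of $S$. Concretely: suppose $\mu$ is supported on a single fragment $\gamma_1$ that is \emph{not} in the $F$-direction of $C$, and no admissible fragment meets $\gamma_1$ in positive $\Ho$-measure. Then $\alpha=0$, every $\mu$-null $A$ is maximal, and the choice $A=\emptyset$ gives $S=X$, which plainly fails the curve-null condition even though some other choice of $S$ (namely $S=\operatorname{im}\gamma_1$) would succeed. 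Your scheme provides no mechanism for selecting the correct representative.

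Your attempted contradiction reflects this: from $\Ho(\gamma_0\cap S)>0$ you try to manufacture $B\subset S$ with $\mu(B)>0$ carrying an Alberti representation, but a single fragment carries no information about $\mu$ whatsoever. The ``bundling $\gamma_0$ into a family via measurable selection'' paragraph is not an argument --- nothing in the hypothesis produces such a family, and you yourself concede at the end that the crucial step is ``the measure-theoretic content inherited from \cite{Bate_2014}'', which is circular. The proof in \cite{Bate_2014} runs in the opposite direction: one first builds $S$ directly as (an essential supremum of) curve-null sets, and then shows that $\mu\llcorner(X\setminus S)$ admits the representation. The hard implication is ``no positive-$\mu$-measure curve-null subset $\Rightarrow$ Alberti representation exists'', not the one you attempt.
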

We also require the following result, which allows us to \emph{refine} the directions of an Alberti representation.
\begin{lemma}[\cite{Bate_2014}, Corollary 5.9]\label{lem:refine}
  Let $(X,d,\mu)$ be a metric measure space, $F\colon X \to \R^m$ Lipschitz and $C\subset \R^m$ a cone.
  Suppose that, for some cone $C\subset \R^m$, $\mu\llcorner A$ has an Alberti representation in the $F$-direction of $C$.
  Then, for any countable collection of cones $C_k$ with
  \[\bigcup_{k\in\N}\operatorname{interior}(C_k) \supset C\setminus \{0\},\]
  there exists a countable Borel decomposition $A=\cup_k A_k$ such that each $\mu\llcorner A_k$ has an Alberti representation in the $F$-direction of $C_k$.
\end{lemma}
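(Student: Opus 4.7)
The plan is to bootstrap the refinement from Lemma \ref{lem:one-rep}. Applied to $\mu\llcorner A$ and each $C_k$ separately, the lemma produces a Borel decomposition $A=A_k\cup S_k$ such that $\mu\llcorner A_k$ carries an Alberti representation in the $F$-direction of $C_k$ and $\Ho(\gamma\cap S_k)=0$ for every $\gamma\in\Gamma(X)$ in the $F$-direction of $C_k$. Once I show that $\mu(\bigcap_k S_k)=0$, I disjointify by setting $\tilde A_k:=A_k\setminus\bigcup_{j<k}A_j$ (absorbing the resulting $\mu$-null set into $\tilde A_1$) and restrict each Alberti representation to $\tilde A_k$: since the $F$-direction of $C_k$ is a property of the curves in the support of $\P$ alone, replacing $\mu_\gamma$ by $\mu_\gamma\llcorner\tilde A_k$ preserves it and yields an Alberti representation of $\mu\llcorner\tilde A_k$.

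The hard part is showing $\mu(S)=0$ where $S:=\bigcap_k S_k$. My approach is by contradiction: if $\mu(S)>0$, then by the representation formula $\mu(S)=\int\mu_\gamma(S)\,d\P(\gamma)$ there is a positive $\P$-measure set of $\gamma$ with $\mu_\gamma(S)>0$. Combined with $\mu_\gamma\ll\Ho|_\gamma$ and the biLipschitz character of $\gamma$, this gives $|\gamma^{-1}(S)|>0$ in Lebesgue measure on $\dom\gamma$. I fix one such $\gamma$; for almost every $t\in\gamma^{-1}(S)$, the derivative $(F\circ\gamma)'(t)$ exists and lies in $C\setminus\{0\}$, hence in $\operatorname{interior}(C_k)$ for some $k=k(t)$ by the covering hypothesis. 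Partitioning $\gamma^{-1}(S)$ by the value of $k(t)$ yields a piece of positive Lebesgue measure contained in $\{t:(F\circ\gamma)'(t)\in\operatorname{interior}(C_{k_0})\}$ for some $k_0$, and Lebesgue inner regularity lets me extract a compact $K$ of positive measure inside it. The restriction $\gamma|_K$ is then a curve fragment in the $F$-direction of $C_{k_0}$ with $\gamma(K)\subset S\subset S_{k_0}$, whence $\Ho(\gamma|_K\cap S_{k_0})=\Ho(\gamma(K))>0$, contradicting the defining property of $S_{k_0}$.

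The main obstacle is this extraction step; the remaining pieces are a direct application of Lemma \ref{lem:one-rep}, a routine disjointification, and a routine restriction of Alberti representations. All the measurability I need during the extraction is pointwise in a single fixed $\gamma$ and reduces to Lebesgue inner regularity on $\dom\gamma$, so no measurable selection of curve fragments in $\Gamma(X)$ is required and the contradiction is obtained at a single $\gamma$.
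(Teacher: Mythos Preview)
The paper does not give its own proof of this lemma; it is quoted without argument as Corollary 5.9 of \cite{Bate_2014}. Your derivation from Lemma \ref{lem:one-rep} is correct and is the natural route: decompose $A$ for each $C_k$, use the given Alberti representation of $\mu\llcorner A$ in the $F$-direction of $C$ to find a single fragment meeting $S=\bigcap_k S_k$ in positive measure, and restrict that fragment to force a contradiction with the defining property of some $S_{k_0}$.

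The only step you leave implicit is that the restriction $\gamma|_K$ really is in the $F$-direction of $C_{k_0}$: one needs $(F\circ\gamma|_K)'(t)=(F\circ\gamma)'(t)$ for almost every $t\in K$, which holds at Lebesgue density points of $K$ in $\dom\gamma$. This is exactly the observation recorded in the paper just after Notation \ref{not:1d-perturbation}, so it is available to you and the argument goes through.
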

We will use this lemma in the following way.
Suppose that $\mu\llcorner A$ has Alberti representations in the $F$-direction of independent cones $C_1,\ldots,C_d$.
For any $0<\eps<1$, we may cover each $C_i$ by the interior of a finite number of cones $C_i^j$ of width $\eps$ such that any choice $C_1^{j_1},\ldots,C_d^{j_d}$ is also independent.
By applying the lemma to these collections, we see that there exists a finite Borel decomposition $A=\cup_i A_i$ such that each $\mu\llcorner A_i$ has $d$ $F$-independent Alberti representations in the $F$-direction of cones of width $\eps$.

It is possible to define a collection $\tilde A(F)$ of subsets of $X$ that extends the decomposition given in Lemma \ref{lem:one-rep} in the following way:
there exists a decomposition $X=S\cup \cup_i U_i$ such that $S\in \tilde A(F)$ and each $\mu\llcorner U_i$ has $m$ $F$-independent Alberti representations (see \cite[Definition 5.11, Proposition 5.13]{Bate_2014}).
However, as mentioned above, it will be necessary for us consider the case when $\mu$ has $d$ $F$-independent Alberti representations, for $d\leq m$.
Our first task is to give a suitable decomposition in this case.

We begin with the following.
\begin{lemma}\label{lem:depomposition-subspace-complement}
  Let $(X,d,\mu)$ be a metric measure space, $F\colon X \to \R^m$ Lipschitz and, for some $0 \leq d \leq m$, let $W\leq \R^m$ be a $d$-dimensional subspace.
  For any Borel $U\subset X$, $0<\theta<1$ and $0<\eps< 1-\theta$, there exists a Borel decomposition
  \[U = S \cup U_1 \cup \ldots \cup U_N\]
  such that $\Ho(\gamma \cap S)=0$ for each $\gamma \in \Gamma$ in the $F$-direction of $E(W,\theta)$ and each $\mu\llcorner U_i$ has an Alberti representation in the $F$-direction of some cone $C_i \subset E(W,\theta+\eps)$.
\end{lemma}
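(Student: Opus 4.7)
The plan is to reduce the statement to iteratively applying Lemma~\ref{lem:one-rep}, after first producing a finite cover of $E(W,\theta)\setminus\{0\}$ by cones of the form $C(w_i,\alpha_i)$ that all sit inside the slightly larger set $E(W,\theta+\eps)$.

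The key geometric observation is that $E(W,\theta)\setminus\{0\}$ is contained in the Euclidean interior of $E(W,\theta+\eps)$: for $v\in E(W,\theta)\setminus\{0\}$ the continuous function $u\mapsto\|\pi(u)\|-(1-\theta-\eps)\|u\|$ equals at least $\eps\|v\|>0$ at $u=v$ and hence remains positive in a neighbourhood of $v$. For each $v\in E(W,\theta)\cap S^{m-1}$ one may therefore pick $\delta_v>0$ with $B(v,\delta_v)\subset E(W,\theta+\eps)$ and then $\alpha_v>0$ small enough that $C(v,\alpha_v)\cap S^{m-1}\subset B(v,\delta_v)$; since $E(W,\theta+\eps)$ is closed under positive scaling, the full cone $C(v,\alpha_v)$ lies inside $E(W,\theta+\eps)$. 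Compactness of $E(W,\theta)\cap S^{m-1}$ now yields finitely many $w_1,\dots,w_N$ for which the open cones $\operatorname{int}(C(w_i,\alpha_i))$ cover $E(W,\theta)\cap S^{m-1}$, and hence by scaling cover $E(W,\theta)\setminus\{0\}$. Write $C_i:=C(w_i,\alpha_i)$.

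I would then iterate Lemma~\ref{lem:one-rep}. Setting $S_0:=U$, apply the lemma to $\mu\llcorner S_0$ with the cone $C_1$ to obtain a Borel splitting $S_0=U_1\cup S_1$ with $\mu\llcorner U_1$ admitting an Alberti representation in the $F$-direction of $C_1$ and $\Ho(\gamma\cap S_1)=0$ for every $\gamma$ in the $F$-direction of $C_1$. Apply the lemma again to $\mu\llcorner S_1$ with $C_2$, and continue for $i=3,\dots,N$. The outcome is a Borel decomposition $U=U_1\sqcup\cdots\sqcup U_N\sqcup S$ with $S:=S_N$, such that $\Ho(\gamma\cap S)=0$ for every $\gamma$ in the $F$-direction of any single $C_i$.

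To finish, I upgrade this null property to every $\gamma$ in the $F$-direction of $E(W,\theta)$. Given such a $\gamma$, the measurable sets $T_i:=\{t\in\dom\gamma:(F\circ\gamma)'(t)\in\operatorname{int}(C_i)\}$ cover $\dom\gamma$ modulo a null set by the first paragraph. Partition $\dom\gamma$ (modulo null sets) into Borel subsets $T'_i\subset T_i$ and exhaust each $T'_i$ from within by compact sets $K_{i,j}$. Each restricted fragment $\gamma|_{K_{i,j}}$ lies in $\Gamma(X)$, and by the Lebesgue density theorem, at almost every $t\in K_{i,j}$ the derivative of $F\circ\gamma|_{K_{i,j}}$ equals $(F\circ\gamma)'(t)\in\operatorname{int}(C_i)$; so $\gamma|_{K_{i,j}}$ is in the $F$-direction of $C_i$ and meets $S$ in an $\Ho$-null set. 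Summing over $i,j$ and noting that Lipschitz images of null sets are null gives $\Ho(\gamma\cap S)=0$. The main technical point is the first paragraph — carefully arranging the cones so that the $C_i$ lie strictly in $E(W,\theta+\eps)$ while their interiors still exhaust $E(W,\theta)\setminus\{0\}$; the remaining steps are standard manipulations of Alberti representations and biLipschitz sub-fragments.
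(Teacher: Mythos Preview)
Your proof is correct and takes essentially the same approach as the paper: cover $E(W,\theta)$ by finitely many cones $C_i\subset E(W,\theta+\eps)$, apply Lemma~\ref{lem:one-rep} for each, and decompose any fragment in the $F$-direction of $E(W,\theta)$ into sub-fragments pointing into the individual $C_i$. The only minor difference is that the paper applies Lemma~\ref{lem:one-rep} to all of $U$ in parallel for each $C_i$ and then sets $S=\bigcap_i S_i$, rather than iterating on successive residual sets as you do; your treatment of the sub-fragment argument is also somewhat more detailed than the paper's.
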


\begin{proof}
  Cover $E(W,\theta)$ by cones $C_1, \ldots, C_N \subset E(W,\theta+\eps)$ and for each $1\leq i \leq N$ apply Lemma \ref{lem:one-rep} to obtain a decomposition $U = U_i \cup S_i$ where $\mu \llcorner U_i$ has an Alberti representation in the $F$-direction of $C_i$ and $\Ho(\gamma\cap S_i)=0$ for each $\gamma$ in the $F$-direction of $C_i$.

  Observe that $S:= \cap_i S_i$ satisfies $\Ho(\gamma\cap S)=0$ for any $\gamma$ in the $F$-direction of $E(W,\theta)$.
  Indeed, if $\gamma$ is in the $F$-direction of $E(W,\theta)$, there exists a decomposition $\gamma = \gamma_1\cup \ldots \cup \gamma_N$ so that each $\gamma_i$ is in the $F$-direction of $C_i$.
  Thus $\Ho(\gamma_i \cap S)=0$ for each $1\leq i \leq N$ and so $\Ho(\gamma \cap S)=0$.
  Therefore, $U=S\cup U_1\cup \ldots \cup U_N$ is the required decomposition.
\end{proof}

Next we define the sets that generalise the $\tilde A(F)$ sets mentioned above.
We will see that these are precisely those sets with a \emph{weak tangent field}.
Weak tangent fields were first defined in the works of Alberti, Csörnyei and Preiss \cite{Alberti_2011, Alberti_2005, acp-structurenullsets} where many aspects of the classical theory of Alberti representations appears.
In these papers it is shown that any Lebesgue null set in the plane has a weak tangent field.
Furthermore, the relationship between weak tangent fields and various questions in geometric measure theory is established.
\begin{definition}
  \label{def:atilde}
  Fix a finite dimensional Banach space $V$, a Lipschitz $F\colon X \to V$ and an integer $d \leq \dim V$.

  For $0<\theta<1$ we define $\tilde A(F, d, \theta)$ to be the set of all $S\subset X$ for which there exists a Borel decomposition $S=S_{1} \cup \ldots \cup S_{M}$ and $d$-dimensional subspaces $W_{i} \leq V$ such that, for each $1\leq i \leq M$, $\Ho(\gamma\cap S_{i})= 0$ for every $\gamma\in \Gamma$ in the $F$-direction of $E(W_{i},\theta)$.
  Further, we define $\tilde A(F,d)$ to be the set of all $S\subset X$ that belong to $\tilde A(F,d,\theta)$ for each $0<\theta<1$.

  For $m \in \N$, let $\mathcal C$ be the collection of closed, conical subsets of $\R^m$ (that is, closed sets that are closed under multiplication by scalars).
  We define a metric on $\mathcal C$ by setting $d(V,W)$ to be the Hausdorff distance between $V\cap \mathbb S^{m-1}$ and $W\cap \mathbb S^{m-1}$.
  Note that for any integer $d\leq m$, $G(m,d)$ is a closed subset of $\mathcal C$.

  Let $S\subset X$ be Borel.
  A Borel $\tau \colon S \to G(m,d)$ is a \emph{$d$-dimensional weak tangent field to $S$ with respect to $F$} if, for every $\gamma\in\Gamma(X)$,
  \begin{equation*}
    (F\circ\gamma)'(t) \in \tau(\gamma(t)) \quad \text{for } \Ho \text{-a.e. } t\in\gamma^{-1}(S).
  \end{equation*}
\end{definition}
Note that the sets $\tilde A(F,d,\theta)$ decrease as $\theta$ increases to 1, and that any Borel subset of a $\tilde A(F,d,\theta)$ set is also in $\tilde A(F,d,\theta)$.
Also, $\tilde A(F,d,\theta) \subset \tilde A(F,d',\theta)$ if $d\leq d'$.
Further, by the compactness of $\mathbb S^{m-1}$, an equivalent definition is obtained if we allow \emph{countable} decompositions of an $\tilde A(F,d,\theta)$ set, rather than finite decompositions.
Thus, $\tilde A(F,d,\theta)$ and hence $\tilde A(F,\theta)$ sets are closed under countable unions.

The $\tilde A(\phi)$ sets of \cite{Bate_2014} are essentially $\tilde A(\phi,n-1)$ sets and
the weak tangent field introduced by Alberti, Csörnyei and Preiss for a set $S\subset \R^{n}$ is what we call an $(n-1)$-dimensional weak tangent field with respect to the identity.

It is easy to see the connection between weak tangent fields and $\tilde A$ sets.
The only technical point is to construct a tangent field in a Borel regular way.
First the simple direction.
\begin{lemma}\label{lem:tangent-field-gives-atilde}
	For $F\colon X \to \R^m$ Lipschitz, let $S\subset X$ have a $d$-dimensional weak tangent field with respect to $F$.
	Then $S\in \tilde A(F,d)$.
\end{lemma}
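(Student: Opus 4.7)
The plan is to fix an arbitrary $\theta \in (0,1)$ and produce the decomposition required for $S \in \tilde A(F,d,\theta)$ by pulling back, via the weak tangent field $\tau\colon S \to G(m,d)$, a finite open cover of the Grassmannian whose pieces are engineered to be incompatible with the relevant conical complements. Since the construction will work uniformly in $\theta$, the conclusion $S \in \tilde A(F,d)$ follows immediately.

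The crux is the geometric observation that for any $V \in G(m,d)$ one has $V \cap E(V,\theta) = \{0\}$: a non-zero $v \in V$ satisfies $\pi_V(v) = 0$, where $\pi_V$ is orthogonal projection onto $V^\perp$, so $v \notin E(V,\theta)$. Using continuity of $\pi_V$ together with compactness of the unit sphere in $V$, I would upgrade this to an open condition, producing a neighbourhood $\mathcal N_V \subset G(m,d)$ of $V$ such that $V' \cap E(V,\theta) = \{0\}$ for every $V' \in \mathcal N_V$. Compactness of $G(m,d)$ then yields a finite subcover $\mathcal N_{V_1},\ldots,\mathcal N_{V_M}$, and I would set $W_i := V_i$.

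I would then let $S_i := \tau^{-1}(\mathcal N_{V_i}) \setminus \bigcup_{j<i}\tau^{-1}(\mathcal N_{V_j})$, which is Borel since $\tau$ is Borel and each $\mathcal N_{V_i}$ is open, and the $S_i$ partition $S$. To verify the $\tilde A$-condition with witness $W_i$, let $\gamma \in \Gamma(X)$ be in the $F$-direction of $E(W_i,\theta)$: for $\Ho$-a.e.\ $t \in \gamma^{-1}(S_i)$ the weak tangent field hypothesis gives $(F\circ\gamma)'(t) \in \tau(\gamma(t))$, while being in the chosen $F$-direction gives $(F\circ\gamma)'(t) \in E(W_i,\theta) \setminus \{0\}$. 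Since $\tau(\gamma(t)) \in \mathcal N_{V_i}$ forces $\tau(\gamma(t)) \cap E(W_i,\theta) = \{0\}$, these two constraints are incompatible, so $\Ho(\gamma^{-1}(S_i)) = 0$, and biLipschitzness of $\gamma$ then yields $\Ho(\gamma \cap S_i) = 0$.

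The only real obstacle is the construction of the neighbourhoods $\mathcal N_V$, which turns the pointwise statement $V \cap E(V,\theta) = \{0\}$ into an open condition stable under small perturbations of $V$; the required uniform gap between $\|\pi_V(v)\|$ and $(1-\theta)\|v\|$ is precisely what compactness of the unit sphere in $V$ delivers. Everything else is standard Borel bookkeeping combined with a direct invocation of the weak tangent field definition.
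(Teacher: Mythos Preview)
Your proof is correct and follows essentially the same approach as the paper's: both pull back a finite family of $d$-dimensional subspaces $W_i$ via the tangent field $\tau$ to decompose $S$, then observe that on each piece the weak tangent field forces $(F\circ\gamma)'(t) \notin E(W_i,\theta)$, which is incompatible with $\gamma$ being in the $F$-direction of $E(W_i,\theta)$. The paper is terser, simply selecting $W_1,\ldots,W_M$ with $\bigcap_i E(W_i,\theta)=\{0\}$ and setting $S_i = \tau^{-1}(\R^m\setminus E(W_i,\theta))$, while you spell out the compactness argument on $G(m,d)$ that guarantees the pieces cover $S$.
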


\begin{proof}
  Suppose that $\tau \colon S \to G(m,d)$ is a $d$-dimensional weak tangent field with respect to $F$ and let $0<\theta<1$.
  Let $W_1,\ldots,W_M \in G(m,d)$ such that
  \[\bigcap_{i=1}^M E(W_i,\theta)=\{0\}\]
  and, for each $1\leq i \leq M$, let $S_{i}$ be those $x\in S$ for which $\tau(x) \subset \R^m \setminus E(W_{i},\theta)$, a Borel set.
  Then, if $\gamma \in \Gamma(X)$, $(F\circ \gamma)'(t) \in \R^m \setminus E(W_{i},\theta)$ for almost every $t \in \gamma^{-1}(S_i)$.
  Therefore, $\Ho(\gamma\cap S_i)=0$ for each $\gamma\in \Gamma$ in the $F$-direction of $E(W_{i},\theta)$.
\end{proof}

For the other direction, we must take a little care to construct the weak tangent field in a Borel way.
\begin{lemma}
  \label{lemma:atlide-gives-tangent-field}
  For $F\colon X \to \R^m$ Lipschitz, let $S \in \tilde A(F,d)$.
  Then $S$ has a $d$-dimensional weak tangent field with respect to $F$.
\end{lemma}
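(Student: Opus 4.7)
The plan is to construct $\tau$ as a Borel-measurable accumulation point of a sequence of Borel approximations $\tau_k$, one for each $\theta_k \to 1$, and then to verify the tangent condition one curve fragment at a time. First I would set $\theta_k = 1 - 1/k$. Since $S \in \tilde A(F,d,\theta_k)$, using countable (rather than finite) decompositions as permitted by the remark after Definition \ref{def:atilde}, I obtain pairwise disjoint Borel sets $\{S_{k,j}\}_{j \in \N}$ with $S = \bigcup_j S_{k,j}$ and subspaces $W_{k,j} \in G(m,d)$ such that $\Ho(\gamma \cap S_{k,j}) = 0$ for every $\gamma \in \Gamma(X)$ in the $F$-direction of $E(W_{k,j},\theta_k)$. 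Define Borel maps $\tau_k \colon S \to G(m,d)$ by $\tau_k(x) = W_{k,j}$ on $S_{k,j}$.

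Next I would extract a Borel selector from the accumulation sets. The multifunction $\Lambda(x) := \{\text{accumulation points of } (\tau_k(x))_k\} \subset G(m,d)$ has nonempty closed values by compactness of $G(m,d)$, and is weakly Borel-measurable because, for every open $U \subset G(m,d)$,
\[\{x \in S : \Lambda(x) \cap U \neq \emptyset\} = \bigcap_{N \in \N} \bigcup_{k \geq N} \tau_k^{-1}(U)\]
is Borel. The Kuratowski--Ryll-Nardzewski selection theorem then yields a Borel $\tau \colon S \to G(m,d)$ with $\tau(x) \in \Lambda(x)$ for every $x \in S$.

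Finally I would verify the tangent condition. Fix $\gamma \in \Gamma(X)$; since $F \circ \gamma$ is Lipschitz, $(F\circ\gamma)'(t)$ exists for $\Ho$-a.e.\ $t \in \dom\gamma$. If $(F\circ\gamma)'(t) \in E(W_{k,j},\theta_k) \setminus \{0\}$ on a set $A \subset \gamma^{-1}(S_{k,j})$ of positive $\Ho$-measure, then by inner regularity a compact $A' \subset A$ of positive measure gives a curve fragment $\gamma|_{A'} \in \Gamma(X)$ in the $F$-direction of $E(W_{k,j},\theta_k)$ whose image lies in $S_{k,j}$ and has positive $\Ho$-measure, contradicting the defining property of $S_{k,j}$. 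Hence for $\Ho$-a.e.\ $t \in \gamma^{-1}(S_{k,j})$,
\[\|\pi_{W_{k,j}^\perp}((F\circ\gamma)'(t))\| \leq (1-\theta_k)\|(F\circ\gamma)'(t)\|.\]
A countable union over $(k,j)$ produces an $\Ho$-null $N \subset \dom\gamma$ off which the same bound holds with $W_{k,j}$ replaced by $\tau_k(\gamma(t))$ for every $k$. For $t \in \gamma^{-1}(S) \setminus N$, pass to a subsequence $k_n$ with $\tau_{k_n}(\gamma(t)) \to \tau(\gamma(t))$; continuity of $(W,v) \mapsto \pi_{W^\perp}(v)$ combined with $1-\theta_{k_n} \to 0$ forces $\pi_{\tau(\gamma(t))^\perp}((F\circ\gamma)'(t)) = 0$, i.e.\ $(F\circ\gamma)'(t) \in \tau(\gamma(t))$, as required.

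The principal obstacle is measurability: the sequence $\tau_k(x)$ need not converge because the decompositions at different $\theta_k$ are obtained independently, so a naive pointwise limit fails to define a Borel function. The Kuratowski--Ryll-Nardzewski step is what rescues this; everything else is an essentially soft argument against curve fragments.
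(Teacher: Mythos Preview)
Your argument is correct and close in spirit to the paper's, though the mechanism for extracting a Borel limit differs. The paper works in the space $\mathcal C$ of closed conical subsets of $\R^m$ with the Hausdorff metric: it sets $L_n(x)=\bigcap_{j\le n} C(W_{i(x,j),j},\theta_j)$, where $C(W,\theta)$ is the closed cone around $W$ complementary to $E(W,\theta)$, shows that $L_n\to L$ pointwise in $\mathcal C$ (hence $L$ is Borel), checks that $(F\circ\gamma)'(t)\in L(\gamma(t))$ almost everywhere, and then extends the cone $L(x)$, which lies inside some $d$-plane, to an element of $G(m,d)$ ``dimension by dimension''. Your route via the accumulation multifunction $\Lambda$ and the Kuratowski--Ryll-Nardzewski theorem is arguably cleaner in that it lands directly in $G(m,d)$ and makes the Borel selection explicit, at the price of invoking a nontrivial selection theorem; the paper's argument is more elementary but leaves the final Borel-extension step somewhat implicit.

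One small repair is needed. The displayed identity
\[
\{x:\Lambda(x)\cap U\neq\emptyset\}=\bigcap_{N\in\N}\bigcup_{k\ge N}\tau_k^{-1}(U)
\]
is only the inclusion $\subset$: if $\tau_k(x)\in U$ infinitely often but every accumulation point lies on $\partial U$, the right-hand side holds while the left fails. The fix is routine. With a countable basis $\{V_n\}$ of $G(m,d)$ one has
\[
\{x:\Lambda(x)\cap U\neq\emptyset\}=\bigcup_{\overline{V_n}\subset U}\ \bigcap_{N\in\N}\ \bigcup_{k\ge N}\tau_k^{-1}(V_n),
\]
which is Borel; alternatively, one may simply note that $\Lambda$ is the Kuratowski upper limit of the single-valued Borel maps $\tau_k$ and is therefore automatically weakly measurable. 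The remainder of your proof goes through as written.
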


\begin{proof}
  For each $j\in \N$ let
    \begin{equation*}
    S = \bigcup_{i=1}^{M_{j}}S_{i,j}
  \end{equation*}
  be a disjoint Borel decomposition given by the definition of an $\tilde A$ set with the choice $\theta=1/j$, where $W_{i,j}\in G(m,d)$.
  To define a weak tangent field with respect to $F$,
  for each $x\in S$ and $j\in\N$, let $i(x,j)\in \N$ such that $x \in S_{i(x,j),j}$.
  For each $n\in \N$ define
  \begin{equation*}
    L_n(x) = \bigcap_{j=1}^n C(W_{i(x,j),j},1/j) \in \mathcal C
  \end{equation*}
  and
  \begin{equation*}
    L(x) = \bigcap_{j\in \N}C(W_{i(x,j),j},1/j) \in \mathcal C,
  \end{equation*}
  for $C(W,\theta)$ the closure of $\R^m\setminus E(W,\theta)$ (it is a ``cone'' around $W$).

  First observe that, for any $\gamma\in \Gamma(X)$, $(F\circ\gamma)'(t) \in L(\gamma(t))$ for almost every $t\in \gamma^{-1}(S)$.
  Indeed, for each $j\in \N$ and $1\leq i \leq M_{j}$, for almost every $t\in\gamma^{-1}(S_{i,j})$, $(F\circ\gamma)'(t) \in C(W_{i,j},1/j)$.
  That is, for almost every $t \in \gamma^{-1}(S)$ and every $j\in \N$, $(F\circ\gamma)'(t)\in C(W_{i(\gamma(t),j),j},1/j)$.
  Therefore, for a full measure subset of $\gamma^{-1}(S)$, $(F\circ\gamma)'(t)\in L(\gamma(t))$.
  Of course, $L(x)$ may not belong to $G(m,d)$, and so we must find a weak tangent field $\tau$ that contains $L$ at almost every point.

  However, $L(x)$ is contained in a $d$-dimensional subspace for each $x\in S$.
  Indeed, let $W_{i(x,j_k),j_k}\to W$ be any convergent subsequence as $k\to \infty$.
  Then
  \[L(x) \in L_k(x) \subset C(W_{i(x,j_k),j_k},1/j_k) \subset C(W,d_k),\]
  where $d_k \to 0$ as $j_k \to 0$, so that $L(x)\subset W$.

  Moreover, $L(x)$ is a Borel function, since, for each $x\in S$, $L_n(x)\to L(x)$ as $n\to \infty$.
  Indeed, since $L(x) \subset L_n(x)$ for each $n\in \N$, if $L_n(x)\not\to L(x)$, there exist some $\eps>0$ and a sequence $y_n \in \mathbb S^{n-1}\cap L_n(x)$ with
  \[y_n \not \in B(\mathbb S^{m-1}\cap L(x),\eps)\]
  for each $n\in \N$.
  By the compactness of $\mathbb S^{m-1}$, we may suppose that
  \[y_n\to y \not \in B(\mathbb S^{m-1}\cap L(x),\eps)\]
  as $n\to \infty$.
  Since $L_n(x)$ decreases as $n$ increases, $y_n \in L_{n'}(x)$ whenever $n'\leq n$, and so $y\in L_{n'}(x)$ for each $n'\in\N$.
  Therefore, $y\in L(x)$, a contradiction.

  Thus, we can set $\tau(x)$ to be the span of $L(x)$ and, wherever necessary, extend it to a $d$-dimensional subspace in a Borel way.
\end{proof}

Next we generalise \cite[Proposition 5.13]{Bate_2014}.
Although it is possible to deduce this result from \cite[Proposition 5.13]{Bate_2014}, because of several technical details in the statement of that proposition, it is simpler to give a direct proof.
\begin{proposition}\label{prop:atilde-decomposition}
  Let $(X,d,\mu)$ be a metric measure space, $F\colon X \to \R^m$ Lipschitz and $0\leq d <m$ an integer.
  There exists a Borel decomposition
  \[X=S \cup \bigcup_{j\in \N}U_j\]
  where $S\in \tilde A(F,d)$ and each $\mu\llcorner U_j$ has $d+1$ $F$-independent Alberti representations.
\end{proposition}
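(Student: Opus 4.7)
My plan is to prove this by induction on $k \in \{0, 1, \ldots, d+1\}$, maintaining the invariant that there exists a Borel decomposition
\[ X = S_k \cup \bigcup_{j \in \N} U^{(k)}_j \]
with $S_k \in \tilde A(F, d)$ and, for each $j$, $\mu \llcorner U^{(k)}_j$ having $k$ $F$-independent Alberti representations in narrow cones centred on vectors $v^{(k)}_{j,1}, \ldots, v^{(k)}_{j,k}$ which span a $k$-dimensional subspace $W^{(k)}_j \leq \R^m$. The base case $k = 0$ is trivial with $S_0 = \emptyset$ and $U^{(0)}_1 = X$, and the case $k = d+1$ is the statement of the proposition.

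For the inductive step, fix a piece $U = U^{(k)}_j$ with associated subspace $W$. I would iteratively apply Lemma \ref{lem:depomposition-subspace-complement} with $\theta_n = 1 - 1/n$ and a suitable $\eps_n > 0$: starting from $V_0 = U$, at step $n$ the lemma gives a Borel decomposition $V_{n-1} = V_n \cup \bigcup_l U_{n,l}$ where each $\mu \llcorner U_{n,l}$ has an Alberti representation in $F$-direction of some cone $C_{n,l} \subset E(W, \theta_n + \eps_n)$, and $\Ho(\gamma \cap V_n) = 0$ for every $\gamma$ in $F$-direction $E(W, \theta_n)$. Using Lemma \ref{lem:refine} I would refine each $C_{n,l}$ to be arbitrarily narrow around a centre direction $w_{n,l}$ which, by $C_{n,l} \subset E(W, \theta_n + \eps_n)$, has nonzero projection onto $W^\perp$ and hence lies outside $W$. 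Each $U_{n,l}$ inherits the existing $k$ Alberti representations of $U$ by restriction, and by choosing $C_{n,l}$ narrow enough these combine with the new representation into $k+1$ $F$-independent Alberti representations on $U_{n,l}$; I place each such $U_{n,l}$ into $U^{(k+1)}$ with subspace $W \oplus \sspan\{w_{n,l}\}$, and set aside the residual $V^{(j)} := \bigcap_n V_n$.

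The purpose of letting $\theta_n \uparrow 1$ is that $V^{(j)}$ then satisfies $\Ho(\gamma \cap V^{(j)}) = 0$ for every $\gamma$ in $F$-direction $E(W, \theta_n)$ for all $n$, hence $V^{(j)} \in \tilde A(F, k, \theta)$ for every $\theta < 1$, so $V^{(j)} \in \tilde A(F, k) \subset \tilde A(F, d)$. Collecting all such $V^{(j)}$ (one per $j$) into $S_{k+1}$ preserves the invariant, since $\tilde A(F, d)$ is closed under countable unions. After $d+1$ iterations every surviving piece has $d+1$ $F$-independent Alberti representations, completing the construction.

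The main obstacle is the $F$-independence verification in the inductive step. The set $E(W, \theta_n + \eps_n)$ widens to all of $\R^m$ as $\theta_n \to 1$, so a priori its refined sub-cones can be nearly tangent to $W$; the delicate point is to tune $\eps_n$ and the refinement widths of the existing $k$ cones together with that of $C_{n,l}$ so that any selection of one non-zero vector from each of the $k+1$ cones remains linearly independent. Beyond this, the remaining work is routine bookkeeping of countable Borel decompositions and the standard fact that the restriction of an Alberti representation to a Borel subset is again an Alberti representation.
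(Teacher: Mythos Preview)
Your overall strategy matches the paper's: build up to $d+1$ $F$-independent Alberti representations piece by piece, using Lemma~\ref{lem:depomposition-subspace-complement} to peel off a residual lying in some $\tilde A(F,\cdot,\theta)$ class and Lemma~\ref{lem:refine} to control cone widths. But the order in which you nest the two loops is backwards, and this turns the ``main obstacle'' you flag into a genuine gap rather than merely a delicate point.

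At inductive level $k$, the $k$ existing cones on a piece $U$ have some fixed width $\alpha>0$ inherited from the previous step, with centres spanning the $k$-dimensional subspace $W$. When you then iterate Lemma~\ref{lem:depomposition-subspace-complement} with $\theta_n\to 1$, the new cone $C_{n,l}\subset E(W,\theta_n+\eps_n)$ is only guaranteed to have its centre at angular distance of order $1-\theta_n$ from $W$. Once $1-\theta_n$ drops below a constant times $\alpha$, any choice $v_1,\ldots,v_k$ from the existing cones spans a $k$-plane $W'$ that can contain the centre of $C_{n,l}$, and independence cannot be verified. Refining the existing cones on $U_{n,l}$ does not help: the refined centres $w_i'$ may lie anywhere in $C(w_i,\alpha)$, so $W'=\sspan\{w_i'\}$ still wanders within roughly $\alpha$ of $W$, while the new cone was positioned relative to $W$, not $W'$. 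And refining $C_{n,l}$ on each sub-piece forces you (via Lemma~\ref{lem:refine}) to cover \emph{all} of $C_{n,l}$, including sub-cones centred in $W'$; those sub-pieces then carry $k+1$ representations that are not independent, with no evident way to dispose of them.

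The paper's proof avoids this by swapping the loops: fix $\theta$ first, run the entire induction from $0$ up to $d+1$ representations choosing all cone widths $\alpha<\sqrt{1-\theta^2}/2$ (so independence with a new cone in $E(W,\theta+\eps)$ is immediate at every step), obtain $S^\theta\in\tilde A(F,d,\theta)$, and only then intersect $S=\bigcap_i S^{\theta_i}$ over a sequence $\theta_i\to 1$. Your argument is repaired by adopting this order; note also that the paper applies Lemma~\ref{lem:depomposition-subspace-complement} with a $d$-dimensional subspace containing the current $i\leq d$ centres, so the residual lands directly in $\tilde A(F,d,\theta)$ rather than $\tilde A(F,k,\theta)$.
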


\begin{proof}
  Fix $0<\theta<1$ and choose an arbitrary $d$-dimensional subspace $W\leq \R^m$ and apply Lemma \ref{lem:depomposition-subspace-complement} to obtain a Borel decomposition $U= S \cup U_1 \cup \ldots \cup U_N$ where $\Ho(\gamma\cap S)=0$ for each $\gamma\in\Gamma$ in the $F$-direction of $E(W,\theta)$ and each $\mu\llcorner U_j$ has an Alberti representation in the $F$-direction of some cone $C_j\subset \R^m$.
  In particular, $S\in \tilde A(F,d,\theta)$.

  If $d=0$ then we are done.
  Otherwise, suppose that, for some $0< i \leq d$, there exists a Borel decomposition $U= S \cup U_1 \cup\ldots \cup U_N$ such that each $\mu\llcorner U_j$ has $i$ $F$-independent Alberti representations and $S\in \tilde A(F,d,\theta)$.
  By applying Lemma \ref{lem:refine} and taking a further decomposition if necessary, we may suppose that each Alberti representation of the $\mu\llcorner U_j$ are in the $F$-direction of cones of width $0<\alpha< \sqrt{1-\theta^2}/2$.

  For a moment fix $1\leq j \leq N$ and let $C(w_1,\alpha),\ldots,C(w_i,\alpha)$ be independent cones that define the $F$-direction of the Alberti representations of $\mu\llcorner U_j$.
  By applying Lemma \ref{lem:depomposition-subspace-complement} to a $d$-dimensional subspace $W$ containing $w_1,\ldots, w_i$, we obtain a decomposition $U_j = S_j \cup U_1^j \cup \ldots \cup U_{M_j}^j$ where $S_j \in \tilde A(F,d,\theta)$ and each $\mu\llcorner U_k^j$ has an Alberti representation in the $F$-direction of some cone $C \subset E(W,\theta+\eps)$ in addition to the other $i$ Alberti representations.
  Since $\alpha< \sqrt{1-\theta^2}/2$, $\eps>0$ may be chosen so that $C, C_1,\ldots, C_i$ forms an independent collection of cones, and hence so that each $\mu\llcorner U_k^j$ has $i+1$ $F$-independent Alberti representations.

  Since $S':=S\cup S_1 \cup \ldots \cup S_N \in \tilde A(F,d,\theta)$, this gives a Borel decomposition $U=S'\cup_{j,k} U_{k}^j$ where $S' \in \tilde A(F,d,\theta)$ and each $\mu \llcorner U_k^j$ has $i+1$ $F$-independent Alberti representations.

  Repeating this process $d-1$ times gives a decomposition $X \setminus S^\theta =\cup_{j}U_j^\theta$ where each $\mu\llcorner U_j$ has $d+1$ Alberti representations and $S^\theta \in \tilde A(F,d,\theta)$.
  Repeating this for $\theta_i \to 1$ and setting $S=\cap_{i}S^{\theta_i} \in \tilde A(F,d)$ gives a decomposition
  \[X=S\cup \bigcup_{i,j}U_j^{\theta_i}\]
  of the required form.
\end{proof}

We also obtain the following generalisation of \cite[Theorem 5.14]{Bate_2014}.
\begin{theorem}
  \label{thm:general-alberti-pu}
  Let $(X,d,\mu)$ be a metric measure space, $F\colon X \to \R^{m}$ Lipschitz and $d<m$ and integer.
  \begin{enumerate}
  \item \label{item:atilde-upperbound} For every positive measure Borel subset $X'$ of $X$, $\mu\llcorner X'$ has at most $d$ $F$-independent Alberti representations if and only if there exists $N\subset X$ with $\mu(N)=0$ and $X\setminus N \in \tilde A(F,d)$.
  \item \label{item:atlide-exact} There exists a decomposition $X=\cup_i X_i$ so that each $\mu\llcorner X_i$ has $d+1$ $F$-independent Alberti representations if and only if each $\tilde A(F,d)$ subset of $X$ is $\mu$-null.
  \end{enumerate}
\end{theorem}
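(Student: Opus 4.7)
The plan is to reduce both statements to Proposition~\ref{prop:atilde-decomposition} via a single geometric observation about independent cones, which I would isolate and prove first. The observation is: \emph{if $C_1,\ldots,C_{d+1} \subset \R^m$ are independent cones, there exists $\theta_0<1$ such that for every $W \in G(m,d)$ at least one $C_j$ lies in $E(W,\theta_0)$.} This is a compactness argument on $G(m,d)$: if it failed, one could find $W_n \to W$ in $G(m,d)$ together with unit vectors $w_j^n \in C_j \cap \mathbb S^{m-1}$ satisfying $d(w_j^n, W_n) \to 0$; passing to a convergent subsequence yields nonzero $w_j \in C_j \cap W$ for each $j=1,\ldots,d+1$, contradicting independence of the cones since $\dim W = d$.

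With this in hand, Part~\ref{item:atilde-upperbound} $(\Leftarrow)$ goes by contradiction: assume $X \setminus N \in \tilde A(F,d)$ and that some positive measure Borel $X'$ admits $d+1$ $F$-independent representations in the directions of $C_1,\ldots,C_{d+1}$. Choose $\theta_0$ from the lemma, decompose $X \setminus N = \bigcup_{i=1}^M S_i$ according to the $\tilde A(F,d,\theta_0)$ structure with associated subspaces $W_i$, and pick an $i$ with $\mu(X' \cap S_i)>0$ together with $j$ such that $C_j \subset E(W_i,\theta_0)$. Then $\P$-almost every curve $\gamma$ of the $j$-th representation is in the $F$-direction of $E(W_i,\theta_0)$, so $\Ho(\gamma \cap S_i)=0$ and hence $\mu_\gamma(S_i)=0$; integrating against $\P$ gives $\mu \llcorner X'(S_i)=0$, contradicting $\mu(X' \cap S_i)>0$. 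The converse $(\Rightarrow)$ is immediate from Proposition~\ref{prop:atilde-decomposition}: it produces $X = S \cup \bigcup_i U_i$ with $S \in \tilde A(F,d)$ and each $\mu\llcorner U_i$ admitting $d+1$ $F$-independent representations, so the hypothesis applied to $X'=U_i$ forces $\mu(U_i)=0$; then $N := \bigcup_i U_i$ is null and $X \setminus N = S \in \tilde A(F,d)$.

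Part~\ref{item:atlide-exact} reduces to Part~\ref{item:atilde-upperbound} plus the elementary observation that Alberti representations restrict canonically: given $\mu\llcorner X_i = \int \mu_\gamma\,d\P$ and a Borel $T \subset X$, replacing each $\mu_\gamma$ by $\mu_\gamma \llcorner T$ yields an Alberti representation of $\mu \llcorner (X_i \cap T)$ with the same curve measure $\P$, hence the same $F$-directions and $F$-independence. For $(\Rightarrow)$, if some $T \in \tilde A(F,d)$ had $\mu(T)>0$ then some $\mu(T \cap X_i)>0$, and restricting the representations of $\mu\llcorner X_i$ to $T$ produces $d+1$ $F$-independent representations of $\mu \llcorner (T \cap X_i)$, a measure fully supported on the $\tilde A(F,d)$ set $T \cap X_i$; this contradicts Part~\ref{item:atilde-upperbound} $(\Leftarrow)$ applied with $N := X \setminus (T \cap X_i)$. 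For $(\Leftarrow)$, Proposition~\ref{prop:atilde-decomposition} again gives $X = S \cup \bigcup_i U_i$ with $S \in \tilde A(F,d)$; by hypothesis $\mu(S)=0$, and absorbing $S$ into any one of the $U_i$ (the restriction of the Alberti representations to a null extension is unchanged) yields the desired decomposition.

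The only substantive step is the cone-independence lemma, where one must extract a \emph{single} $\theta_0<1$ that is uniform over $W \in G(m,d)$; compactness of the Grassmannian does exactly this. Once that is in place, everything else is bookkeeping using Proposition~\ref{prop:atilde-decomposition} together with the definition of an Alberti representation.
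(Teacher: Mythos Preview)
Your proposal is correct and follows essentially the same route as the paper's proof: both directions of each part are obtained from Proposition~\ref{prop:atilde-decomposition} together with the geometric fact that independent cones $C_1,\ldots,C_{d+1}$ cannot all avoid $E(W,\theta)$ for $\theta$ close to $1$ uniformly in $W\in G(m,d)$. The only difference is that you isolate and prove this cone lemma explicitly via compactness of the Grassmannian, whereas the paper simply asserts the existence of such a $\theta$ parenthetically; your treatment is slightly more careful here but the argument is otherwise identical.
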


\begin{proof}
  We first prove \eqref{item:atilde-upperbound}.
  One direction follows from the previous proposition.
  Indeed, if $U_j$ are as in the conclusion of the proposition then, by assumption, each $U_j$ must have $\mu$ measure zero.
  Therefore, setting $N= \cup_i N_i$, a $\mu$-null set completes this direction.

  We prove the other direction by contradiction.
  Suppose that $X'\subset X$ has positive measure and $d+1$ $F$-independent Alberti representations in the direction of cones $C_1,\ldots, C_{d+1}\subset \R^m$.
  Choose $0<\theta<1$ sufficiently large (depending only upon the configuration of the $C_i$ and $m$) such that, for any $d$-dimensional subspace $W\leq \R^m$, $E(W,\theta)$ contains at least one of the $C_i$.

  Since there exists a $\mu$-null set $N$ such that $X\setminus N \in \tilde A(F,d)$, there exists a positive measure subset $Y$ of $X'$ and a $d$-dimensional subspace $W\leq \R^m$ such that $\Ho(\gamma\cap Y')=0$ for each $\gamma \in \Gamma$ in the $F$-direction of $E(W,\theta)$.
  By the choice of $\theta$ above, there exists some $C_i \subset E(W,\theta)$ and so, since $\mu\llcorner X'$ has an Alberti representation in the $F$-direction of $C_i$, we see that $\mu(Y)=0$, a contradiction.

  One direction of \eqref{item:atlide-exact} also follows from the previous proposition.
  For the other direction, suppose that $X=\cup_i X_i$ is such a decomposition and let $S\in \tilde A(F,d)$.
  By applying \eqref{item:atilde-upperbound} to the metric measure space $(X,d,\mu\llcorner S)$, we see that every positive measure subset of $S$ can have at most $d$ $F$-independent Alberti representations.
  However, if $\mu(S)>0$, there exists some $i\in\N$ with $\mu(S\cap X_i)>0$ and hence $S\cap X_i$ is a positive measure subset of $S$ with $d+1$ $F$-independent Alberti representations, a contradiction.
\end{proof}

\subsection{Alberti representations and rectifiability}
\label{ss:alberti-rect}
In this subsection we will give conditions that ensure that a metric measure space with $n$ independent Alberti representations is $n$-rectifiable.
By combining these conditions with the results from the previous subsection, we will obtain a relationship between purely unrectifiable sets and $\tilde A$ sets.

The main result we will use is the following.
\begin{theorem}[{\cite[Theorem 1.2]{David_Bate_2017}}]\label{t:alberti-rect}
  Suppose that a metric measure space $(X,d,\mu)$ satisfies
  \[0 <\liminf_{r\to 0}\frac{\mu(B(x,r))}{r^n} \leq \limsup_{r\to 0}\frac{\mu(B(x,r))}{r^n} < \infty  \quad \text{for } \mu\text{-a.e.\ } x\in X\]
  and has $n$ independent Alberti representations.
  Then there exists a Borel $N\subset X$ with $\mu(N)=0$ such that $X\setminus N$ is $n$-rectifiable.
\end{theorem}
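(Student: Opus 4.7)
The plan is to construct, for $\mu$-almost every $x \in X$, a bi-Lipschitz map from a positive-measure subset of $\R^n$ into $X$ whose image has positive $\mu$-measure near $x$; an exhaustion argument then produces the null set $N$. First I would pass to a positive-measure Borel subset of $X$ on which all the relevant parameters are uniformly controlled. By the definition of $n$ $F$-independent Alberti representations $\mathcal{A}_1,\ldots,\mathcal{A}_n$ (for some Lipschitz $F\colon X\to\R^m$) and Lemma \ref{lem:refine}, after further decomposition one may assume the underlying cones $C_i$ have width as small as desired, so $\P_i$-a.e.\ curve fragment has $F$-derivative close to a fixed unit vector $v_i\in\R^m$ with $v_1,\ldots,v_n$ linearly independent. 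Using Egorov and Lusin on the density hypothesis, I would further restrict to a compact $K$ of positive measure on which the disintegrating curve fragments have uniformly bounded biLipschitz constants, $\P_i$-a.e.\ $\gamma$ meeting $K$ has a definite $\Ho$-density in $K$, and $\mu(B(x,r))/r^n$ lies in a fixed compact interval $[\alpha,\beta]\subset(0,\infty)$ for $r$ in some range $(0,r_0)$.

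The main construction is to fix a density point $x_0\in K$ and build a map $\phi\colon Q\subset\R^n\to X$ by iteratively ``flowing'' along the $n$ families of fragments: starting at $y$, move along a $\gamma_1$-fragment from $\mathcal{A}_1$ for signed time $t_1$, then along a $\gamma_2$-fragment from $\mathcal{A}_2$ for time $t_2$, and so on. The disintegration of $\mu\llcorner K$ by $\P_1$ realises it as an average of measures on curves of $F$-direction roughly $v_1$, so by Fubini a positive-measure set of $\gamma_1$-fragments spends positive $\Ho$-time in $K$; iterating this through $\mathcal{A}_2,\ldots,\mathcal{A}_n$ applied to the conditional measures yields a well-defined Lipschitz map $\phi$ from a set $Q\subset\R^n$ of positive Lebesgue measure into $X$. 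The Lipschitz bound follows from the uniform Lipschitz constants of the flows together with the upper density bound $\limsup\mu(B(x,r))/r^n\leq\beta$. The independence of $v_1,\ldots,v_n$ forces $F\circ\phi$ to be, up to an arbitrarily small error controlled by the cone widths, a fixed invertible affine map, from which a lower bound $\|\phi(t)-\phi(s)\|\geq c|t-s|$ follows on a Lebesgue-density-one subset of $Q$.

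The hard part will be producing the quantitative bi-Lipschitz lower bound \emph{uniformly} on a positive-measure subset of $Q$, and showing that $\phi(Q)$ has positive $\mu$-measure rather than merely positive $\Hn$-measure. Qualitatively one obtains only that the composed flow has non-degenerate derivative in the directions $v_1,\ldots,v_n$ at a.e.\ parameter; extracting uniform constants needs the two-sided density bound to rule out ``thin'' degenerations, playing the role that a tangent-measure analysis of Preiss type plays in the Euclidean setting. Concretely, the lower density $\liminf\mu(B(x,r))/r^n\geq\alpha$ forces $\mu\llcorner K$ to dominate $\alpha\Hn\llcorner K$ on small balls, which combined with the Jacobian estimate for $F\circ\phi$ gives a uniform positive lower bound for $\mu(\phi(Q))/\Hn(Q)$.

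Once one bi-Lipschitz piece of positive $\mu$-measure is produced near a generic point of $K$, a Vitali-type exhaustion finishes the proof: pick a maximal $\mu$-almost disjoint family of such bi-Lipschitz images inside $X$, iterate on the residue, and use the lower density bound to conclude that at each density point of the residue a definite fraction of mass is captured, forcing the residue to be $\mu$-null. The union of the countably many bi-Lipschitz images is then the desired $n$-rectifiable set $X\setminus N$.
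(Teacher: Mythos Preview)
This theorem is not proved in the paper: it is quoted verbatim as \cite[Theorem 1.2]{David_Bate_2017} and used as a black box (see the text immediately following the statement, which passes directly to Corollary~\ref{c:alberti-pu}). There is therefore no ``paper's own proof'' to compare your proposal against.

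That said, your sketch has a genuine gap at the level of the main construction. An Alberti representation is a disintegration $\mu=\int_\Gamma\mu_\gamma\,d\P(\gamma)$ into measures supported on \emph{fragments} $\gamma\colon\dom\gamma\subset\R\to X$ with $\dom\gamma$ compact and totally disconnected in general; there is no flow map, no canonical fragment through a given point, and no reason two fragments from the same representation through nearby points should be close as maps. Your step ``move along a $\gamma_1$-fragment for signed time $t_1$, then along a $\gamma_2$-fragment for time $t_2$'' presupposes a measurable selection $(x,t)\mapsto\Phi_i(x,t)$ that is Lipschitz in $t$ \emph{and} in $x$, and whose composites $\Phi_n(\cdot,t_n)\circ\cdots\circ\Phi_1(\cdot,t_1)$ are defined on a set of positive $\mathcal L^n$-measure. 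Nothing in the definition of an Alberti representation supplies this: the measures $\P_i$ live on the space of fragments, not on a foliation, and distinct fragments may overlap, cross, or have disjoint domains in $\R$. The Fubini-type iteration you describe would at best produce a set of positive $\P_1\otimes\cdots\otimes\P_n$-measure in $\Gamma^n$, not a Lipschitz map from a subset of $\R^n$.

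The argument in the cited paper proceeds quite differently: rather than building bi-Lipschitz pieces directly from the curve fragments, it uses the $n$ independent representations together with the two-sided density bound to control the geometry of $\mu$ at small scales (via the Lipschitz map $F$ realising the independence) and then invokes a rectifiability criterion. If you want to pursue a direct ``parametrisation'' strategy, the obstacle to overcome is precisely producing a coherent chart from the a priori unstructured families $\{\gamma\}$; this is the hard content of the cited theorem, not a routine Egorov--Lusin reduction.
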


We can easily transform the previous result into one about purely $n$-unrectifiable sets.
\begin{corollary}\label{c:alberti-pu}
  Let $S\subset X$ have $\Hn(S)<\infty$, be purely $n$-unrectifiable and satisfy
  \begin{equation*}\tag{$*$}
  	 \liminf_{r\to 0}\frac{\Hn(B(x,r)\cap S)}{r^n}>0 \quad \text{for } \Hn\text{-a.e. } x\in S.
  \end{equation*}
  Then for every Borel $S'\subset S$ of positive $\Hn$ measure, $\Hn \llcorner S'$ has at most $n-1$ independent Alberti representations.
\end{corollary}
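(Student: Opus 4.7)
The plan is to argue by contradiction: suppose some Borel $S'\subset S$ with $\Hn(S')>0$ admits $n$ independent Alberti representations of $\Hn\llcorner S'$. We aim to apply Theorem \ref{t:alberti-rect} to the metric measure space $(X,d,\Hn\llcorner S')$ to obtain a positive-measure $n$-rectifiable subset of $S'$, contradicting that $S$ is purely $n$-unrectifiable.

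The only real content is verifying the two density hypotheses of Theorem \ref{t:alberti-rect} for $\mu:=\Hn\llcorner S'$. The upper bound $\limsup_{r\to 0}\mu(B(x,r))/r^n<\infty$ for $\mu$-a.e.\ $x$ is immediate from the standard estimate $\limsup_{r\to 0}\Hn(B(x,r)\cap S)/r^n\leq 1$ valid $\Hn$-a.e.\ on any set of finite $\Hn$ measure (e.g.\ Mattila 6.2), together with $\Hn(B(x,r)\cap S')\leq \Hn(B(x,r)\cap S)$.

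The lower bound is the main obstacle: hypothesis \eqref{eq:lower-density} is stated for $S$, but we need it for $S'$. To bridge this, I would invoke the classical fact that for any $\Hn$-measurable set $A$ with $\Hn(A)<\infty$, the upper density $\limsup_{r\to 0}\Hn(B(x,r)\cap A)/r^n$ vanishes at $\Hn$-a.e.\ point outside $A$. Applying this to $A=S\setminus S'$ gives
\begin{equation*}
\limsup_{r\to 0}\frac{\Hn(B(x,r)\cap(S\setminus S'))}{r^n}=0 \quad \text{for $\Hn$-a.e. } x\in S'.
\end{equation*}
Combining with
\begin{equation*}
\Hn(B(x,r)\cap S')\geq \Hn(B(x,r)\cap S)-\Hn(B(x,r)\cap(S\setminus S')),
\end{equation*}
and the hypothesis \eqref{eq:lower-density} on $S$, we deduce $\liminf_{r\to 0}\mu(B(x,r))/r^n>0$ for $\mu$-a.e.\ $x$.

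With both densities verified, Theorem \ref{t:alberti-rect} produces a Borel $N\subset X$ with $\mu(N)=0$ such that $S'\setminus N$ is $n$-rectifiable. Since $\Hn(S'\setminus N)=\Hn(S')>0$ and $S'\setminus N\subset S$, this contradicts the pure $n$-unrectifiability of $S$, completing the proof.
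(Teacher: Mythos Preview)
Your proposal is correct and follows essentially the same route as the paper: verify the upper density bound from the standard estimate for sets of finite $\Hn$ measure, transfer the lower density from $S$ to $S'$ via the vanishing upper density of $S\setminus S'$ at $\Hn$-a.e.\ point of $S'$, then invoke Theorem~\ref{t:alberti-rect} to contradict pure $n$-unrectifiability. The only cosmetic difference is that the paper phrases it as ``if $\Hn\llcorner S'$ has $n$ independent Alberti representations then $\Hn(S')=0$'' rather than as a proof by contradiction.
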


\begin{proof}
  Let $S'\subset S$ be Borel.
  Since $S$ has finite $\Hn$ measure, \cite[Theorem 2.10.18]{federer} implies
  \begin{equation*}
    \limsup_{r\to 0}\frac{\Hn(B(x,r)\cap S)}{(2r)^n}\leq 1 \quad \text{for } \Hn\text{-a.e.\ } x\in S
  \end{equation*}
  and
  \begin{equation*}
    \limsup_{r\to 0}\frac{\Hn(B(x,r)\cap (S\setminus S'))}{r^n}=0 \quad \text{for } \Hn\text{-a.e. } x\in S'.
  \end{equation*}
  In particular, by combining with \eqref{eq:lower-density},
  \begin{equation*}
    \liminf_{r\to 0}\frac{\Hn(B(x,r)\cap S')}{r^n} >0 \quad \text{for } \Hn\text{-a.e. } x\in S'.
  \end{equation*}
  Therefore, if $\Hn\llcorner S'$ has $n$ independent Alberti representations, $(X,d,\Hn\llcorner S')$ satisfies the hypotheses of Theorem \ref{t:alberti-rect} and so $S'$ is $n$-rectifiable.
  In particular, since $S$ is purely $n$-unrectifiable, we must have $\Hn(S')=0$.
\end{proof}

There are many situations when the lower density assumption \eqref{eq:lower-density} is not necessary.
First, we mention that it is never necessary.
We will not prove this, but mention it to set the scope for the results of this paper.
\begin{remark}
  \label{rmk:lower-density-unnec}
  Using very deep results regarding the structure of null sets in $\R^{n}$ recently announced by Csörnyei and Jones \cite{csornyei-jones}, it is possible to show that any $(X,d,\Hn)$ with $n$-independent Alberti representations necessarily satisfies \eqref{eq:lower-density}.
  In particular, $X$ is $n$-rectifiable, and Corollary \ref{c:alberti-pu} is true without the assumption \eqref{eq:lower-density}.
  If $n=2$, this can be deduced from the work of Alberti, Csörnyei and Preiss \cite{Alberti_2005}.
  This will appear in future work of myself and T.\ Orponen.
\end{remark}

Without the announcement of Csörnyei and Jones, it is still possible to remove the assumption \eqref{eq:lower-density} in many situations.

First, observe that it is not necessary for 1-dimensional sets.
\begin{observation}
  \label{obs:pu-no-density}
  For any purely 1-unrectifiable metric space $X$, a (non-trivial) measure $\mu$ on $X$ cannot have \emph{any} Alberti representations, and in fact $X\in \tilde A(F,0)$ for any Lipschitz $F\colon X \to \R^m$ and any $m\in \N$.
\end{observation}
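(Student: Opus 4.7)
The plan is to observe that both conclusions follow from a single simple fact: in a purely $1$-unrectifiable metric space $X$, every curve fragment $\gamma\in \Gamma(X)$ has $\Ho(\gamma(\dom\gamma)) = 0$. Indeed, $\gamma\colon \dom\gamma \to X$ is biLipschitz (hence Lipschitz) from a compact subset of $\R$, so $\gamma(\dom\gamma)$ is a $1$-rectifiable subset of $X$ in the sense of Definition \ref{def:rectifiable}. Since $X$ is purely $1$-unrectifiable, applying the definition with $E=\gamma(\dom\gamma)$ gives $\Ho(\gamma(\dom\gamma))=0$.

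For the first claim, suppose toward a contradiction that a non-trivial Borel measure $\mu$ on $X$ admits an Alberti representation $(\P,\{\mu_\gamma\})$. By definition $\mu_\gamma \ll \Ho|_\gamma$, but $\Ho|_\gamma$ is the zero measure by the previous paragraph, so $\mu_\gamma \equiv 0$ for every $\gamma\in \Gamma(X)$. Integrating,
\[
  \mu(X) = \int_{\Gamma(X)} \mu_\gamma(X)\,d\P(\gamma) = 0,
\]
contradicting non-triviality.

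For the second claim, fix a Lipschitz $F\colon X\to \R^m$ and $0<\theta<1$. Take the trivial decomposition ($M=1$, $S_1=X$) together with the $0$-dimensional subspace $W_1=\{0\}\le \R^m$. Then $W_1^\perp = \R^m$ and $\pi=\id$, so $E(\{0\},\theta)=\R^m$ (since any $v\in \R^m$ satisfies $\|v\|\ge (1-\theta)\|v\|$). Consequently every $\gamma\in \Gamma(X)$ is in the $F$-direction of $E(W_1,\theta)$, and by the initial observation $\Ho(\gamma\cap X) = \Ho(\gamma(\dom\gamma))=0$. This verifies $X\in \tilde A(F,0,\theta)$, and since $\theta\in(0,1)$ was arbitrary, $X\in \tilde A(F,0)$.

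No step here is subtle; the only thing to be careful about is correctly unwinding the definitions of $E(W,\theta)$ and of $\tilde A(F,d,\theta)$ for the degenerate choice $W=\{0\}$, and noting that the curve fragments in $\Gamma(X)$ are themselves $1$-rectifiable subsets of $X$ so that pure $1$-unrectifiability applies directly to them.
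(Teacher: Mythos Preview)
Your proof is correct and is precisely the argument the paper has in mind; the paper states this as an observation without proof, and your write-up fills in the natural details. One small imprecision: it is not true that \emph{every} $\gamma\in\Gamma(X)$ is in the $F$-direction of $E(\{0\},\theta)$, since that requires $(F\circ\gamma)'(t)\neq 0$ almost everywhere; but this does not matter, because your key point---that $\Ho(\gamma\cap X)=0$ for \emph{all} $\gamma\in\Gamma(X)$---already covers every $\gamma$ that \emph{is} in the $F$-direction of $E(\{0\},\theta)$, which is all the definition of $\tilde A(F,0,\theta)$ asks.
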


Using the theory of Alberti representations in Euclidean space given by De Philippis and Rindler \cite{dephilippisrindler} and Alberti and Marchese \cite{MR3494485}, we can remove the assumption \eqref{eq:lower-density} when metric space is a subset of some Euclidean space.
Specifically, we will use the following theorem.
\begin{theorem}[Lemmas 3.2, 3.3 \cite{conjectureofcheeger}]
  \label{thm:dephilippis-marchese-rindler}
  Let $(X,d,\mu)$ be a metric measure space and $\phi\colon X \to \R^{n}$ Lipschitz such that $\mu$ has $n$ $\phi$-independent Alberti representations.
  Then $\phi_{\#}\mu$, the pushforward of $\mu$ under $\phi$, is absolutely continuous with respect to $\mathcal L^{n}$
\end{theorem}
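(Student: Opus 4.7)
The plan is to push the Alberti structure from $X$ forward to Euclidean space via $\phi$ and then invoke, on $\R^n$, the Alberti--Marchese theory of the decomposability bundle together with the De Philippis--Rindler structure theorem for singular measures.

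First I would set $\nu := \phi_{\#}\mu$, a $\sigma$-finite Radon measure on $\R^n$, and transfer each of the $n$ $\phi$-independent Alberti representations of $\mu$ to an Alberti representation of $\nu$. Concretely, let $C_1,\ldots,C_n\subset\R^n$ be the independent cones given by Definition~\ref{d:alberti-direction} and let $(\P_i,\{\mu^i_\gamma\})$ be the $i$-th Alberti representation of $\mu$. For $\P_i$-a.e. $\gamma\in\Gamma(X)$, the composition $\phi\circ\gamma$ is Lipschitz on $\dom\gamma$ with derivative in $C_i\setminus\{0\}$ almost everywhere. After a countable Borel decomposition of $\dom\gamma$ according to the size of $|(\phi\circ\gamma)'|$, the restrictions of $\phi\circ\gamma$ become biLipschitz (their component along the axis of $C_i$ has derivative bounded uniformly from below by $(1-\theta)/k$), hence become curve fragments in $\R^n$. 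Pushing $\P_i$ forward through $\gamma\mapsto\phi\circ\gamma$, and pushing the fibre measures $\mu^i_\gamma$ forward through $\phi$, produces an Alberti representation of $\nu$ on $\R^n$ in the direction of $C_i$. The resulting $n$ representations are independent in the Euclidean sense, since they arise from independent cones.

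Second, I would apply the Alberti--Marchese theorem on the decomposability bundle. The existence of an Alberti representation of $\nu$ on $\R^n$ in the direction of a cone $C_i$ forces $C_i$ to be contained in the decomposability bundle $V(\nu,\cdot)$ at $\nu$-a.e.\ point. Since the cones $C_1,\ldots,C_n$ are independent, we conclude $V(\nu,x)=\R^n$ at $\nu$-a.e.\ $x$. Finally, I would invoke De Philippis--Rindler: a Radon measure on $\R^n$ whose decomposability bundle equals all of $\R^n$ almost everywhere must be absolutely continuous with respect to $\mathcal L^n$ (equivalently, their rank-one/PDE-constrained result forces the singular part of $\nu$ with respect to Lebesgue to be concentrated on the set where $V(\nu,\cdot)\neq\R^n$, which is $\nu$-null here). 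This gives $\nu\ll\mathcal L^n$.

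The main obstacle is the pushforward bookkeeping of the first step: one must perform the countable decomposition of $\Gamma(X)$ in a measurable way so that each restricted $\phi\circ\gamma$ is biLipschitz, verify measurability of the assignment $\gamma\mapsto\phi\circ\gamma$ as a map into $\Gamma(\R^n)$ (continuity with respect to the Hausdorff metric on graphs is the key point), and check that the pushed-forward data genuinely integrate to $\nu$. Once this setup is in place, all the geometric content is packaged inside the two cited Euclidean results.
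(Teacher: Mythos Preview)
The paper does not give its own proof of this statement: it is quoted verbatim as Lemmas~3.2 and~3.3 of \cite{conjectureofcheeger} and used as a black box. So there is nothing in the present paper to compare your argument against line by line.

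That said, your sketch is correct and is essentially the argument that sits behind those cited lemmas. One pushes the $n$ $\phi$-independent Alberti representations of $\mu$ forward to Alberti representations of $\nu=\phi_\#\mu$ on $\R^n$ in the directions of the same independent cones (the measurability of $\gamma\mapsto\phi\circ\gamma$ and the countable decomposition into biLipschitz pieces being the routine technical content), then invokes Alberti--Marchese to conclude that the decomposability bundle of $\nu$ is all of $\R^n$ at $\nu$-a.e.\ point, and finally uses the De Philippis--Rindler structure theorem to deduce $\nu\ll\mathcal L^n$. Your identification of the pushforward bookkeeping as the only real obstacle is accurate; once that is set up, the Euclidean results do all the work, exactly as the paper's citation suggests.
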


This leads to the following two results.
\begin{theorem}\label{thm:fractional-dimension-bound}
For $s>0$, $s\not\in\N$, let $S\subset X$ be $\Hs$ measurable with $\Hs(S)<\infty$ and $d$ the greatest integer less than $s$.
Then for every Borel $S'\subset S$ of positive measure, $\Hs \llcorner S'$ has at most $d$ independent Alberti representations.
\end{theorem}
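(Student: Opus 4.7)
The plan is to argue by contradiction using Theorem \ref{thm:dephilippis-marchese-rindler}. Suppose, for some positive measure Borel $S' \subset S$, the measure $\mu := \Hs \llcorner S'$ admits $d+1$ independent Alberti representations. By Definition \ref{d:alberti-direction}, there is a Lipschitz $F\colon X \to \R^m$ and independent cones $C_1,\dots,C_{d+1} \subset \R^m$ such that the representations are $F$-independent with directions in these cones. As noted in the discussion following Definition \ref{d:alberti-direction}, composing $F$ with the orthogonal projection $\pi$ onto the span of chosen vectors $w_i \in C_i$ yields a Lipschitz map $\phi = \pi \circ F \colon X \to \R^{d+1}$ such that $\mu$ has $d+1$ $\phi$-independent Alberti representations.

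Applying Theorem \ref{thm:dephilippis-marchese-rindler} to $(X,d,\mu)$ and $\phi$, the pushforward $\phi_{\#}\mu$ is absolutely continuous with respect to $\mathcal L^{d+1}$ on $\R^{d+1}$. On the other hand, since $\phi$ is Lipschitz,
\begin{equation*}
\Hs(\phi(S')) \leq (\Lip \phi)^s \, \Hs(S') < \infty.
\end{equation*}
Because $s \notin \N$ and $d$ is the greatest integer below $s$, we have $s < d+1$, so any set of finite $\Hs$ measure has $\Hd^{d+1}$ measure zero, and in $\R^{d+1}$ this is equivalent (up to a constant) to $\mathcal L^{d+1}$ measure zero. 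Hence $\mathcal L^{d+1}(\phi(S')) = 0$.

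Combining these, $\phi_{\#}\mu$ is supported on the $\mathcal L^{d+1}$-null set $\phi(S')$, yet is absolutely continuous with respect to $\mathcal L^{d+1}$; therefore $\phi_{\#}\mu = 0$. But $\phi_{\#}\mu(\R^{d+1}) = \mu(X) = \Hs(S') > 0$, a contradiction. The only nontrivial step is reducing the ambient dimension $m$ to exactly $d+1$ so that Theorem \ref{thm:dephilippis-marchese-rindler} applies and the dimension gap $s < d+1$ can be exploited; this is handled by the standard projection trick already outlined in the paper. The rest is a clean Hausdorff-versus-Lebesgue dimension comparison.
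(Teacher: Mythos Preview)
Your proof is correct and follows essentially the same approach as the paper: both reduce to a Lipschitz $\phi\colon X\to\R^{d+1}$ with $d+1$ $\phi$-independent Alberti representations, apply Theorem~\ref{thm:dephilippis-marchese-rindler} to get $\phi_{\#}(\Hs\llcorner S')\ll\mathcal L^{d+1}$, and then use $\Hs(\phi(S'))<\infty$ with $s<d+1$ to force $\mathcal L^{d+1}(\phi(S'))=0$ and hence $\Hs(S')=0$. Your write-up is in fact slightly more explicit than the paper's about the projection step from $\R^m$ down to $\R^{d+1}$; note only the typo ``$\Hd^{d+1}$'' where you mean $\mathcal H^{d+1}$.
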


\begin{proof}
  Let $\phi\colon X \to \R^{d+1}$ be Lipschitz and suppose that $S'\subset S$ is Borel such that $\Hs\llcorner S'$ has $d+1$ independent Alberti representations.
  Then by Theorem \ref{thm:dephilippis-marchese-rindler}, $\phi_{\#}(\Hs \llcorner S') \ll \mathcal L^{d+1}$.
  Since $\Hs(S)<\infty$ and $\phi$ is Lipschitz, $\Hs(\phi(S'))<\infty$.
  Therefore $\mathcal L^{d+1}(\phi(S'))=0$ and so $\phi_{\#}(\Hs \llcorner S')(\phi(S'))=0$.
  That is, $\Hs(S')=0$.
\end{proof}

Combining Theorem \ref{thm:dephilippis-marchese-rindler} with the Besicovitch--Federer projection theorem provides an improvement of Theorem \ref{t:alberti-rect} for subsets of Euclidean space.

\begin{theorem}\label{thm:alberti-reps-projection-thm}
Let $S\subset \R^m$ be Borel with $\Hn(S)<\infty$ such that $\Hn\llcorner S$ has $n$ independent Alberti representations.
Then $S$ is $n$-rectifiable.
\end{theorem}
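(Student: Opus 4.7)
The plan is to argue by contradiction, using the Besicovitch-Federer projection theorem to force certain projections of the unrectifiable part of $S$ to have Lebesgue measure zero, while using Theorem~\ref{thm:dephilippis-marchese-rindler} to force absolute continuity of the projected measure. Suppose $S$ is not $n$-rectifiable and decompose $S = E \cup P$ with $E$ being $n$-rectifiable and $P$ purely $n$-unrectifiable with $\Hn(P) > 0$. Restricting each of the $n$ independent Alberti representations of $\Hn \llcorner S$ to $P$ produces $n$ independent Alberti representations of $\Hn \llcorner P$, since neither the directions of the curves nor the independence of the cones depends on the ambient measure.

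I would then apply the previous lemma, with a parameter $\eps > 0$ to be specified, to obtain a finite Borel decomposition $P = P_1 \cup \dots \cup P_N$ together with independent cones $C(w_1^k, \eps), \dots, C(w_n^k, \eps) \subset \R^m$ for each $k$ such that $\Hn \llcorner P_k$ has Alberti representations in the $\id$-direction of these cones. Fix $k$ with $\Hn(P_k) > 0$ and let $U_k = \sspan(w_1^k, \dots, w_n^k) \in G(m, n)$. By the Besicovitch-Federer theorem applied to the purely $n$-unrectifiable set $P_k$ of finite $\Hn$-measure, the set of $V \in G(m, n)$ with $\mathcal L^n(\pi_V(P_k)) = 0$ has full measure; the same is true of the set of $V$ for which $\pi_V|_{U_k}$ is a linear isomorphism onto $V$. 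Pick any $V$ in the intersection of these two full-measure sets. Then $\pi_V(w_1^k), \dots, \pi_V(w_n^k)$ are linearly independent in $V \cong \R^n$, and provided $\eps$ was taken small enough (depending on $V$), the projected cones $\pi_V(C(w_i^k, \eps))$ are contained in narrow independent cones in $V$ that avoid the origin. This equips $\Hn \llcorner P_k$ with $n$ $\pi_V$-independent Alberti representations.

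Theorem~\ref{thm:dephilippis-marchese-rindler} now yields $(\pi_V)_\#(\Hn \llcorner P_k) \ll \mathcal L^n$. Since $(\pi_V)_\#(\Hn \llcorner P_k)(\pi_V(P_k)) = \Hn(P_k) > 0$ while $\mathcal L^n(\pi_V(P_k)) = 0$ by the choice of $V$, this is a contradiction, and hence $\Hn(P) = 0$. The main technical obstacle is the order of quantifiers: the parameter $\eps$ must be fixed before applying the previous lemma, yet the smallness of $\eps$ required for $\pi_V$-independence depends on the subsequently chosen $V$. I would handle this by iteration: apply the lemma with $\eps = 1/j$ for every $j \in \N$, select a single $V$ generic with respect to every finite collection of directions produced (a countable intersection of full-measure sets, hence still full measure), and then pick $j$ large enough that the $j$-th decomposition yields cones narrow enough for the projected cones to be independent in $V$.
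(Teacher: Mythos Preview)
Your overall strategy matches the paper's: combine Besicovitch--Federer with Theorem~\ref{thm:dephilippis-marchese-rindler} to derive a contradiction from a purely unrectifiable piece of positive measure. The difference lies in how the projection plane $V$ is selected, and your iteration fix for the quantifier problem does not close the gap.

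The difficulty is this: when you apply the previous lemma with $\eps=1/j$, the centres $w_i^{k,j}$ of the resulting cones are not controlled as $j$ varies. For a fixed $V$, the condition ``$\pi_V(C(w_i^{k,j},1/j))$ are independent'' requires $1/j$ to be small relative to the condition number of $\pi_V|_{U_k^j}$, and that condition number can blow up along the sequence of decompositions even though each $\pi_V|_{U_k^j}$ is an isomorphism. Your countable-intersection choice of $V$ guarantees that every $\pi_V|_{U_k^j}$ is invertible, but gives no uniform quantitative bound, so there is no reason a single $j$ works.

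The paper sidesteps this entirely by choosing $V$ \emph{from inside the cones}: once the lemma has produced independent cones $C_1,\ldots,C_n$ of width $\eps$, one takes $V=\sspan\{v_1,\ldots,v_n\}$ for arbitrary nonzero $v_i\in C_i$. Because each $v_i$ lies in $V$ and is within $O(\eps)$ of the centre of $C_i$, the projected cones $\pi(C_i)$ stay narrow around the $v_i$; for $\eps$ small enough this forces $\pi(C_1),\ldots,\pi(C_n)$ to be independent, and the smallness threshold does not depend on which $v_i$ were chosen. The set of planes $V$ arising this way is open and nonempty in $G(m,n)$, so Besicovitch--Federer supplies one with $\mathcal L^n(\pi(S'))=0$. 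This is exactly the uniformity you were trying to manufacture after the fact; building it into the choice of $V$ is what makes the argument go through cleanly.
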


\begin{proof}
  Let $\phi\colon \R^m \to \R^n$ be Lipschitz such that the Alberti representations of $\Hn\llcorner S$ are in the $\phi$-direction of independent cones $C_1,\ldots,C_n \subset \mathbb R^n$.
  Identify $S$ with its image under the biLipschitz embedding $\iota \colon x\mapsto (\phi(x),x)\in \R^n\times\R^m$ and also identify $\phi$ with the orthogonal projection onto $W := \R^n \times \{0\} \subset \R^n\times \R^m$.
By pushing forward each of the original Alberti representations, we see that $\mu:=\iota_{\#}\Hn\llcorner S$ has $n$ $\phi$-independent Alberti representations, each in the $\phi$-direction of a $C_i$.
Write the Alberti representations of $\mu$ as $(\{\mathbb P_i,\{\mu_\gamma^i\})$, for $1\leq i \leq n$.

After taking a countable decomposition of $S$, we may suppose that there exists a $\delta>0$ such that $\|\phi(\gamma'(t))\| \geq \delta \|\gamma'(t)\|$ for $\mathbb P_i$-a.e.\ $\gamma\in \Gamma(\R^n \times \R^m)$, $\Ho$-a.e.\ $t\in\dom\gamma$, and each $1\leq i \leq n$.
(This follows by applying \cite[Corollary 5.9]{Bate_2014} for each Alberti representation, with $\psi$ the orthogonal projection onto the centre of $C_i$, and slightly widening each cone such that the widened cones are also independent.)

Therefore, for any orthogonal projection $\pi$ onto an $n$-dimensional plane $V$ sufficiently close to $W$ (depending on $\delta,n,m$), $\pi_{\#}\mu$ also has $n$-independent Alberti representations.
By Theorem \ref{thm:dephilippis-marchese-rindler}, $\pi_{\#}\mu \ll \mathcal L^n$.

However, if $S$ is not $n$-rectifiable,
there exists some Borel $S'\subset S$ with $\Hn(S')>0$ that is purely $n$-unrectifiable.
Since $\Hn(S)<\infty$, we also have $\Hn(S')<\infty$.
By the Besicovitch--Federer projection theorem, there exist $V$ arbitrarily close to $W$ for which $\mathcal L^n(\pi(S'))=0$, and hence $\pi_{\#}\mu(\pi(S'))=0$.
This contradicts the fact that $\Hn(S')>0$.
\end{proof}

\begin{corollary}\label{cor:alberti-reps-projection-cor}
Let $S\subset \R^m$ be purely $n$-unrectifiable with finite $\Hn$ measure.
For any Borel $S'\subset S$ with positive $\Hn$ measure, $\Hn \llcorner S'$ has at most $n-1$ independent Alberti representations.
\end{corollary}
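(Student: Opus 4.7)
The plan is to deduce this corollary as an immediate contrapositive consequence of Theorem \ref{thm:alberti-reps-projection-thm} combined with the definition of pure $n$-unrectifiability. Suppose, for contradiction, that there exists a Borel subset $S'\subset S$ with $\Hn(S')>0$ for which $\Hn\llcorner S'$ has $n$ independent Alberti representations. Since $S'$ is a Borel subset of $S$ and $\Hn(S)<\infty$, we automatically have $\Hn(S')<\infty$, so the hypotheses of Theorem \ref{thm:alberti-reps-projection-thm} are satisfied for $S'$ in place of $S$.

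Applying Theorem \ref{thm:alberti-reps-projection-thm} therefore gives that $S'$ is $n$-rectifiable. But $S$ is assumed purely $n$-unrectifiable, which by Definition \ref{def:rectifiable} means $\Hn(S\cap E)=0$ for every $n$-rectifiable $E\subset \R^m$. Taking $E=S'$ yields $\Hn(S')=\Hn(S\cap S')=0$, contradicting the choice $\Hn(S')>0$.

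There is essentially no obstacle here: the work has all been done in Theorem \ref{thm:alberti-reps-projection-thm}, whose proof used the auxiliary lemma to refine Alberti representations into ones in the $\id$-direction of independent cones, Theorem \ref{thm:dephilippis-marchese-rindler} to push mass to absolute continuity with respect to $\mathcal L^n$ after projection onto a generic $n$-plane, and the classical Besicovitch-Federer projection theorem to rule out purely unrectifiable pieces. The corollary is merely the restatement of that rectifiability criterion as a ceiling on the number of independent Alberti representations carried by a purely unrectifiable set, in direct analogy with the metric version Corollary \ref{c:alberti-pu} but with the lower density hypothesis \eqref{eq:lower-density} now unnecessary because Theorem \ref{thm:alberti-reps-projection-thm} replaces Theorem \ref{t:alberti-rect} in the Euclidean setting.
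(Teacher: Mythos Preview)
Your argument is correct and is exactly the intended one: the paper states this corollary without proof, as an immediate consequence of Theorem \ref{thm:alberti-reps-projection-thm} and the definition of pure unrectifiability. The additional commentary on how Theorem \ref{thm:alberti-reps-projection-thm} was proved is accurate but superfluous for the corollary itself.
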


As noted earlier, the recent work of Csörnyei Jones allows us to remove the lower density assumption from Theorem \ref{t:alberti-rect}.
Alternatively, we may use Theorem \ref{thm:dephilippis-marchese-rindler} to remove the upper density assumption.
\begin{corollary}
  Let $(X,d,\mu)$ be a metric measure space with $\mu(X)<\infty$ and $n$ independent Alberti representations.
  Then
  \begin{equation*}
    \limsup_{r\to 0}\frac{\mu (B(x,r))}{r^{n}} < \infty \quad \text{for } \mu\text{-a.e.\ } x\in X.
  \end{equation*}
\end{corollary}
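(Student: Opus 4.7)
The plan is to reduce the statement to a density estimate for an absolutely continuous measure on $\R^n$ via the pushforward under a witnessing Lipschitz function, and then to transfer this back to $X$ using that $\phi$ is Lipschitz.

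First, let $\phi\colon X\to \R^n$ be a Lipschitz map witnessing that $\mu$ has $n$ $\phi$-independent Alberti representations, and let $L=\Lip \phi$. By Theorem \ref{thm:dephilippis-marchese-rindler}, the pushforward $\nu:=\phi_\#\mu$ is absolutely continuous with respect to $\mathcal L^n$. Since $\mu(X)<\infty$, $\nu$ is a finite Borel measure on $\R^n$. The standard Lebesgue differentiation theorem for a finite Radon measure $\nu$ on $\R^n$ tells us that the upper density
\[
D(y):=\limsup_{r\to 0}\frac{\nu(B(y,r))}{r^n}
\]
is finite at $\mathcal L^n$-a.e.\ point $y\in \R^n$; when $\nu\ll\mathcal L^n$ with density $f$, one even has $D(y)=\omega_n f(y)$ at $\mathcal L^n$-a.e.\ $y$. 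In particular, the set $E:=\{y\in\R^n:D(y)=\infty\}$ is Lebesgue null.

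Since $\nu\ll \mathcal L^n$, we also have $\nu(E)=0$, i.e.\ $\mu(\phi^{-1}(E))=0$. Thus for $\mu$-a.e.\ $x\in X$ we have $\phi(x)\notin E$, so $D(\phi(x))<\infty$. Using that $\phi$ is $L$-Lipschitz, $\phi(B(x,r))\subset B(\phi(x),Lr)$, hence $B(x,r)\subset \phi^{-1}(B(\phi(x),Lr))$ and so
\[
\mu(B(x,r))\;\leq\;\nu(B(\phi(x),Lr)).
\]
Dividing by $r^n$ and taking $\limsup_{r\to 0}$ yields
\[
\limsup_{r\to 0}\frac{\mu(B(x,r))}{r^n}\;\leq\;L^n\, D(\phi(x))\;<\;\infty,
\]
which is the desired conclusion.

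The substantive input is entirely Theorem \ref{thm:dephilippis-marchese-rindler}; the remaining steps are purely a routine transfer through a Lipschitz map and the Lebesgue differentiation theorem, so there is no real obstacle once the pushforward is shown to be absolutely continuous. The only point worth noting is that we must restrict to $\mu$-a.e.\ $x$ by pulling back a Lebesgue null set through $\phi$, which is why absolute continuity (rather than a weaker statement) is needed.
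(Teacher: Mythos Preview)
Your proof is correct and follows essentially the same approach as the paper: push forward by the witnessing Lipschitz map $\phi\colon X\to\R^n$, apply Theorem~\ref{thm:dephilippis-marchese-rindler} to get $\phi_\#\mu\ll\mathcal L^n$, use the density theorem in $\R^n$, and pull the conclusion back via $\phi(B(x,r))\subset B(\phi(x),Lr)$. Your write-up is in fact slightly more careful in spelling out why the $\mathcal L^n$-a.e.\ finiteness of the density becomes a $\phi_\#\mu$-a.e.\ (hence $\mu$-a.e.) statement.
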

\begin{proof}
  Let $\phi\colon X \to \R^{n}$ be Lipschitz such that $\mu$ has $n$ $\phi$-independent Alberti representations.
  By Theorem \ref{thm:dephilippis-marchese-rindler}, $\phi_{\#}\mu \ll \mathcal L^{n}$ and so
  \begin{equation*}
    \limsup_{r\to 0}\frac{\phi_{\#}\mu (B(x,r))}{r^{n}} < \infty \quad \text{for } \phi_{\#}\mu\text{-a.e.\ } x\in \R^{n}.
  \end{equation*}
  In particular, since $\phi(B(x,r)) \subset B(\phi(x),\Lip \phi r)$ for each $x\in X$ and $r>0$,
  \begin{equation*}
    \limsup_{r\to 0}\frac{\mu (B(x,r))}{r^{n}} < \infty \quad \text{for } \mu\text{-a.e.\ } x\in X.
  \end{equation*}
\end{proof}

Combining Theorems \ref{thm:general-alberti-pu} and \ref{thm:fractional-dimension-bound}, Observation \ref{obs:pu-no-density} and Corollaries \ref{c:alberti-pu} and \ref{cor:alberti-reps-projection-cor} gives the following relationship between purely unrectifiable and $\tilde A$ sets.
\begin{theorem}\label{thm:atilde-summary}
  For $s>0$ let $S\subset X$ be $\Hs$ measurable with $\Hs(S)<\infty$ and let $d$ be the greatest integer strictly less than $s$.
  Suppose that either $s\not \in \N$ or $S$ is purely $s$-unrectifiable and one of the following holds:
  \begin{enumerate}
    \item \label{i:summary-1pu} $S$ is purely 1-unrectifiable (in this case, we may set $d=0$);
    \item \label{i:summary-euclidean} $X=\R^k$ for some $k\in \N$;
    \item \label{i:summary-star} $S$ satisfies \eqref{eq:lower-density}.
  \end{enumerate}
  Then for any Lipschitz $F\colon X \to \R^m$, there exists a $N\subset S$ with $\Hs(N)=0$ such that $S\setminus N \in \tilde A(F,d)$.
\end{theorem}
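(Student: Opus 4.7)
The plan is to apply part \ref{item:atilde-upperbound} of Theorem \ref{thm:general-alberti-pu} to the metric measure space $(X,d,\Hs\llcorner S)$ and the given Lipschitz map $F\colon X\to\R^m$. That part asserts that producing a $\Hs$-null Borel set $N\subset S$ with $S\setminus N\in\tilde A(F,d)$ is equivalent to the following bound on Alberti representations: for every Borel $S'\subset S$ with $\Hs(S')>0$, the measure $\Hs\llcorner S'$ admits at most $d$ $F$-independent Alberti representations. (If $m\leq d$ the conclusion $S\in\tilde A(F,d)$ is immediate from the definition, as one may take each $W_i$ to be all of $\R^m$, so I may assume $m>d$.) The entire proof therefore reduces to verifying this bound in each of the hypothesised cases.

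If $s\notin\N$, then $d$ is the greatest integer strictly less than $s$, and Theorem \ref{thm:fractional-dimension-bound} applied to $F$ directly delivers the required bound.

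Suppose instead that $s\in\N$ and $S$ is purely $s$-unrectifiable. In case \ref{i:summary-1pu} the assertion is stronger than what the reduction above requires: Observation \ref{obs:pu-no-density}, applied to $S$ itself (which is purely $1$-unrectifiable and inherits the structure of a complete metric space), yields $S\in\tilde A(F,0)$ directly, so we may take $N=\emptyset$ and $d=0$. In case \ref{i:summary-euclidean}, where $X=\R^k$, Corollary \ref{cor:alberti-reps-projection-cor} provides the required bound with $d=s-1$. In case \ref{i:summary-star}, Corollary \ref{c:alberti-pu}, whose proof already uses Federer's density estimates to inherit \eqref{eq:lower-density} from $S$ to any positive measure Borel subset $S'$, supplies the bound with $d=s-1$.

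Having verified the required hypothesis in each case, Theorem \ref{thm:general-alberti-pu}\ref{item:atilde-upperbound} delivers the desired Borel null set $N$, and Lemma \ref{lemma:atlide-gives-tangent-field} additionally converts $S\setminus N\in\tilde A(F,d)$ into a $d$-dimensional weak tangent field, should that form be preferred. There is no genuine obstacle in this final step: all the real work is done by the earlier results of the section, and this theorem is essentially a packaging statement that unifies several independently established sufficient conditions for ``few Alberti representations'' into a single clean assertion about the existence of a weak tangent field.
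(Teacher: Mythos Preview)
Your proposal is correct and follows exactly the route the paper takes: the paper's own proof is the single sentence ``Combining Theorems \ref{thm:general-alberti-pu} and \ref{thm:fractional-dimension-bound}, Observation \ref{obs:pu-no-density} and Corollaries \ref{c:alberti-pu} and \ref{cor:alberti-reps-projection-cor}\ldots'', and you have spelled out precisely how those pieces fit together via part \ref{item:atilde-upperbound} of Theorem \ref{thm:general-alberti-pu}. One tiny quibble: when $m<d$ there is no $d$-dimensional subspace of $\R^m$, so saying ``take each $W_i$ to be all of $\R^m$'' is not literally compatible with the definition; however this edge case is harmless (e.g.\ pad $F$ with zeros into $\R^{d+1}$), and the paper itself defers handling it to the proof of Theorem \ref{thm:general-perturbation}.
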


\begin{remark}
Note that the converse to this theorem is true for the integer case: if $S$ is not purely $n$-unrectifiable, then, if $f \colon A\subset \R^m \to S$ is biLipschitz with $\mathcal L^n(A)>0$, $S \not\in \tilde A(f^{-1},n-1)$.
\end{remark}

\begin{remark}
  \label{rmk:lower-density-unnec3}
  By using the comments in Remark \ref{rmk:lower-density-unnec}, we see that this theorem is true for all purely unrectifiable sets, without assuming \eqref{eq:lower-density}.

  The announced results of Csörnyei Jones also imply that any Lebesgue null set of $\R^n$ belongs to $\tilde A(\id,n-1)$.
  By considering projections to $n$ dimensional subspaces spanned by coordinate axes, this implies that any $N\subset \R^m$ with $\Hn(N) =0$ belongs to $\tilde A(\id,n-1)$.
  Therefore, for any $N\subset X$ with $\Hn(N)=0$ and Lipschitz $F\colon X\to \R^m$, $N\in \tilde A(F,n-1)$.
  That is, we may take $N=\emptyset$ in the previous theorem.
\end{remark}

\section{Constructing real valued perturbations} \label{s:constructing-perturbations}

First we fix some notation for this section.
\begin{notation}\label{not:1d-perturbation}
	Let $B$ be a Banach space, $T\colon B \to \R$ linear and $\delta>0$.
	Suppose that $S\subset B$ is compact and satisfies $\Ho(\gamma\cap S)=0$ for each $\gamma\in \Gamma(B)$ with
  \begin{equation}\label{eq:S-small-on-curves}(T\circ \gamma)'(t) \geq \delta \|T\|\Lip(\gamma,t) \quad \text{for }\Ho \text{-a.e.\ } t\in \dom \gamma.\end{equation}
  We let $\Omega$ be the closed convex (and hence compact) hull of $S$.
  Further, for $\gamma\in \Gamma(B)$ and $V\subset B$ Borel define
  \[R(V, \gamma, \delta)=\int_{\gamma^{-1}(B\setminus V)}(T\circ\gamma)' + \int_{\dom\gamma} \delta \|T\|\Lip(\gamma,\cdot).\]
\end{notation}

 Note that $\Ho(\gamma\cap S)=0$ for each $\gamma\in \Gamma(B)$ satisfying \eqref{eq:S-small-on-curves} is equivalent to
  \begin{equation}\label{eq:S-small-on-subset}
  \Ho(\gamma(\{t\in\dom\gamma: (T\circ\gamma)'(t) \geq \delta\|T\| \Lip(\gamma,t)\})\cap S)=0
  \end{equation}
  for all $\gamma\in \Gamma(B)$.
  Indeed, for any compact
  \[K\subset \{t\in\dom\gamma: (T\circ\gamma)'(t) \geq \delta \|T\|\Lip(\gamma,t)\},\]
  for almost every $t\in K$, $(T\circ\gamma|_K)'(t)=(T\circ\gamma)'(t)$ and $\Lip(\gamma|_K,t)=\Lip(\gamma,t)$.
  Thus $\gamma|_K$ satisfies \eqref{eq:S-small-on-curves} and so $\mathcal L^1(K)=0$ and hence \eqref{eq:S-small-on-subset}.

In this section we construct an arbitrarily small perturbation $f$ of $T$ that, when restricted to $S$, has pointwise Lipschitz constant at most $\delta$.
Suppose that $x,y\in B$ are connected by a curve $\gamma$.
By the fundamental theorem of calculus, $T(x)-T(y)=\int_{\dom\gamma}(T\circ\gamma)'$.
We will construct a function $f$ for which this integral can (almost) be replaced by $R(V, \gamma, \delta)$, for $V$ an appropriate neighbourhood of $S$ in $\Omega$.
Note that, when restricted to $S$, $f$ does have pointwise Lipschitz constant at most $\delta$, because the first integral in the definition of $R$ equals zero.

The first step is to find an appropriate $V$ such that the resulting function is a small perturbation of $T$.
Compare to \cite[Lemma 6.2]{Bate_2014}.
\begin{lemma}\label{lem:old-small-on-curves}
  For any $\eps>0$ there exists a $V\supset S$, open in $\Omega$, such that
  \begin{equation*}
    R(V, \gamma, \delta) \geq T(\gamma(l))-T(\gamma(0)) -\eps,
  \end{equation*}
  for any $l\geq 0$ and any Lipschitz $\gamma\colon [0,l] \to \Omega$ with $(T\circ \gamma)'\geq 0$ almost everywhere.
\end{lemma}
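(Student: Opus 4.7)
Subtracting $T(\gamma(l))-T(\gamma(0))=\int_{\dom\gamma}(T\circ\gamma)'$ from both sides and splitting $\gamma^{-1}(V)=(\gamma^{-1}(V)\cap B_\gamma)\sqcup(\gamma^{-1}(V)\setminus B_\gamma)$, where $B_\gamma:=\{t:(T\circ\gamma)'(t)\ge\delta\|T\|\Lip(\gamma,t)\}$, the complementary piece $\gamma^{-1}(V)\setminus B_\gamma$ is pointwise bounded by $\delta\|T\|\Lip(\gamma,\cdot)$ and so absorbed by the $\int_{\dom\gamma}\delta\|T\|\Lip(\gamma,\cdot)$ term. Thus the conclusion reduces to producing an open $V\supset S$ in $\Omega$ so that
\[\int_{\gamma^{-1}(V)\cap B_\gamma}(T\circ\gamma)'(t)\,dt \le \eps\]
uniformly over all Lipschitz $\gamma\colon[0,l]\to\Omega$ with $(T\circ\gamma)'\ge 0$ a.e. Define $\tau(U):=\sup_\gamma\int_{\gamma^{-1}(U)\cap B_\gamma}(T\circ\gamma)'$ on open $U\subseteq\Omega$; it is monotone, and the task becomes $\tau(V)<\eps$.

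The main elementary tool is a \emph{ball bound}: since $T\circ\gamma$ is non-decreasing on $[0,l]$ and $T$ varies by at most $2r\|T\|$ on any $B(x,r)$, writing $\gamma^{-1}(B(x,r))=\bigsqcup_i(a_i,b_i)$ as its connected components and using the monotonicity ordering $T(\gamma(a_1))\le T(\gamma(b_1))\le T(\gamma(a_2))\le\cdots$ gives the telescoping estimate
\[\int_{\gamma^{-1}(B(x,r))}(T\circ\gamma)'(t)\,dt = \sum_i\bigl(T(\gamma(b_i))-T(\gamma(a_i))\bigr) \le 2r\|T\|,\]
uniformly in admissible $\gamma$; in particular $\tau$ is finitely subadditive. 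Equivalently, the one-dimensional area formula applied to the monotone function $T\circ\gamma$ yields $\int_{\gamma^{-1}(U)\cap B_\gamma}(T\circ\gamma)'=\mathcal L^1(T(U\cap\gamma(B_\gamma)))$, which together with the hypothesis $\Ho(\gamma(B_\gamma)\cap S)=0$ already delivers the pointwise vanishing $\int_{\gamma^{-1}(S)\cap B_\gamma}(T\circ\gamma)'=0$ for each individual $\gamma$.

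To upgrade pointwise vanishing to a uniform bound on an open neighbourhood of $S$, I argue by contradiction and Arzelà–Ascoli. If $\tau(V_n)\ge\eta>0$ for some basis $V_n\downarrow S$ of open neighbourhoods, pick $\gamma_n$ realising at least $\eta/2$. Reparametrising by arc length makes $\Lip(\gamma_n,\cdot)=1$ and $(T\circ\gamma_n)'\in[\delta\|T\|,\|T\|]$ on $B_{\gamma_n}$, whence $|B_{\gamma_n}|\le\diam(\Omega)/\delta$; restricting to a compact $K_n\subseteq\gamma_n^{-1}(V_n)\cap B_{\gamma_n}$ of Lebesgue measure at least $\eta/(2\|T\|)$ makes $\gamma_n|_{K_n}\in\Gamma(\Omega)$ biLipschitz with constants $\delta$ and $1$ depending only on $\delta$ and $\|T\|$. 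Since $\Omega$ is compact and the $K_n$ sit in a fixed interval of length at most $\diam(\Omega)/\delta$, a subsequence converges in the graph-Hausdorff topology on $\Gamma(\Omega)$ to some $\gamma_\infty\in\Gamma(\Omega)$ carrying the same biLipschitz constants and the same cone condition $(T\circ\gamma_\infty)'\ge\delta\|T\|\Lip(\gamma_\infty,\cdot)$. Compactness of $S$ and $V_n\downarrow S$ force $\im\gamma_\infty\subseteq\bigcap_n\overline{V_n}=S$, and the uniform biLipschitz constants give $\Ho(\im\gamma_\infty)\ge\delta\eta/(2\|T\|)>0$, contradicting the standing hypothesis $\Ho(\gamma_\infty\cap S)=0$.

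\textbf{Main obstacle.} The delicate point is the final limit: Lebesgue measure of compact subsets of $\R$ is \emph{not} continuous under Hausdorff convergence, so the positive lower bound on $|K_n|$ does not directly pass to $|\dom\gamma_\infty|$. One must therefore transfer it to a lower bound on $\Ho(\gamma_n(K_n))$, which \emph{does} survive Hausdorff limits of biLipschitz curve fragments with fixed biLipschitz constants. Securing these uniform constants is precisely what the combination of arc-length reparametrisation and the cone inequality $(T\circ\gamma_n)'\ge\delta\|T\|\Lip(\gamma_n,\cdot)$ on $K_n$ provides.
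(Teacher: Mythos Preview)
Your reduction to the uniform bound $\sup_\gamma\int_{\gamma^{-1}(V)\cap B_\gamma}(T\circ\gamma)'\le\eps$ is correct, and the contradiction-plus-compactness strategy is the right idea, but the compactness step has gaps beyond the one you flag. The central problem is the claim that $\gamma_n|_{K_n}$ is biLipschitz with constants $\delta$ and $1$. After arc-length reparametrisation you have $(T\circ\gamma_n)'\ge\delta\|T\|$ only on $K_n$; for $s<t$ in $K_n$ this gives
\[
\|\gamma_n(t)-\gamma_n(s)\|\ge\|T\|^{-1}\int_s^t(T\circ\gamma_n)'\ge\delta\,\bigl|[s,t]\cap K_n\bigr|,
\]
not $\delta|t-s|$, because on $[s,t]\setminus K_n$ you only know $(T\circ\gamma_n)'\ge 0$. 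So the lower biLipschitz bound, and hence the passage of the cone condition to the Hausdorff limit, are unjustified. A second oversight: you assert that ``the $K_n$ sit in a fixed interval of length at most $\diam(\Omega)/\delta$'', but what is bounded is $|K_n|$, not $\diam K_n$; after arc-length the full domain $[0,l_n]$ can have $l_n$ arbitrarily large (the curve may wander for a long time with $(T\circ\gamma_n)'$ small), so there is no common compact interval in which to run Arzel\`a--Ascoli on the fragments. Incidentally, the obstacle you \emph{do} identify is not one: Lebesgue measure is upper semicontinuous under Hausdorff convergence of compact subsets of a fixed interval, so $|K_\infty|\ge\limsup|K_n|$ would come for free.

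The paper sidesteps all of this by \emph{not} making your reduction. Working directly with $R(V_n,\gamma_n,\delta)\le T(\gamma_n(l_n))-T(\gamma_n(0))-\eps$, the retained term $\int_{\dom\gamma_n}\delta\|T\|\Lip(\gamma_n,\cdot)$ gives a uniform bound on the arc length of the \emph{full} curves $\gamma_n$; after fixing endpoints (compactness of $\Omega$) and reparametrising as $1$-Lipschitz maps on a common $[0,l]$, Arzel\`a--Ascoli produces a limit curve $\gamma$, and the inequality passes to the limit because both integrands in $R$ are total variations, hence lower semicontinuous under uniform convergence. One arrives at $R(S,\gamma,\delta)\le T(\gamma(l))-T(\gamma(0))-\eps$, which rearranges directly to a contradiction with \eqref{eq:S-small-on-subset}. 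Your reduction discards precisely the term that supplies the length bound, which is why the fragment route becomes awkward.
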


\begin{proof}
It is possible to deduce this directly from \cite[Lemma 6.2]{Bate_2014}.
However, the set up for that lemma is more technical, and also less general than the present situation.
For simplicity, we give a direct proof.

	Suppose that the conclusion is false for some $\eps>0$ and the sets
	\[V_n = \{x \in \Omega: d(x,S)<1/n\}.\]
	Then for each $n\in\N$ there exists a Lipschitz $\gamma_n \colon [0,l_n]\to \Omega$ with $(T\circ \gamma_n)'\geq 0$ almost everywhere such that
	\begin{equation}\label{eq:counter-assumption}R(V_n, \gamma_n, \delta) \leq T(\gamma_n(l))-T(\gamma_n(0)) -\eps.\end{equation}
	By the compactness of $\Omega$, we may suppose that each $\gamma_n$ has the same end points, $\gamma_s,\gamma_e\in \Omega$.
	Observe that for each $n\in \N$,
	\[\delta \|T\|\Ho(\gamma_n) \leq \int_{\dom\gamma_n} \delta \|T\|\Lip(\gamma_n,\cdot) \leq T(\gamma_e)-T(\gamma_s).\]
	Therefore, there exists an $l\geq 0$ and a reparametrisation of each $\gamma_n$ such that each is a 1-Lipschitz function defined on $[0,l]$.
	Further, by the Arzelà-Ascoli theorem and taking a subsequence if necessary, we may suppose that the $\gamma_n$ converge uniformly to some $\gamma\colon [0,l]\to \Omega$.

	Fix an $m\in \N$ and let $n\geq m$.
	Then, since $V_n\subset V_m$ and $(T\circ\gamma_n)'\geq 0$ almost everywhere,
	\[R(V_m, \gamma_n, \delta) \leq R(V_n, \gamma_n, \delta).\]
	Let $I$ be a finite collection of closed intervals contained in $\gamma^{-1}(B\setminus \overline{V_m})$, an open subset of $\R$.
	Note that both of the integrals appearing in the definition of $R(\overline{V_m}, \gamma|_
	I, \delta)$ are the total variation of Lipschitz functions.
	Thus, by the lower semi-continuity of total variation under uniform convergence,
	\begin{equation}\label{eq:first-eq}R(\overline{V_m}, \gamma|_I, \delta) \leq \liminf_{n\to\infty} R(\overline{V_m}, \gamma_n|_I, \delta).\end{equation}
	Further, since $(T\circ\gamma_n)'\geq 0$ almost everywhere and $V_n \subset \overline{V_m}$ for each $n\geq m$,
	  \begin{equation}\label{eq:second-eq}\liminf_{n\to\infty} R(\overline{V_m}, \gamma_n|_I, \delta) \leq \liminf_{n\to\infty}R(V_n, \gamma_n|_I, \delta) \leq T(\gamma(l))-T(\gamma(0)) -\eps,\end{equation}
	  where the final inequality uses \eqref{eq:counter-assumption} and the fact that $(T\circ\gamma_n)'\geq 0$ almost everywhere.
	By combining \eqref{eq:first-eq} and \eqref{eq:second-eq} and taking the supremum over all such $I$, since $\gamma^{-1}(B\setminus \overline{V_m})$ is open, we obtain
	\[R(\overline{V_m}, \gamma, \delta) \leq T(\gamma(l))-T(\gamma(0)) -\eps\]
	for each $m\in\N$.
	Since $S$ is closed, $\overline{V_m}$ monotonically decrease to $S$ as $m$ increases and so
	\begin{equation}\label{eq:third-eq} R(S, \gamma, \delta) \leq T(\gamma(l))-T(\gamma(0)) -\eps.\end{equation}

	By substituting the definition of $R$ into \eqref{eq:third-eq}, applying the fundamental theorem of calculus to the right hand side and rearranging the resulting inequality, we see that
	\[\int_{[0,l]}\delta \|T\|\Lip(\gamma,\cdot) \leq \int_{\gamma^{-1}(S)}(T\circ \gamma)' -\eps.\]
	Applying \eqref{eq:S-small-on-subset} gives
	\[\int_{[0,l]}\delta \|T\|\Lip(\gamma,\cdot) \leq \int_{\gamma^{-1}(S)}(T\circ \gamma)' -\eps \leq \int_{\gamma^{-1}(S)}\delta\|T\| \Lip(\gamma,\cdot) -\eps,\]
	which contradicts $\eps>0$.
\end{proof}

Next we extend the previous lemma to include all curves in $B$, not only those in $\Omega$.
\begin{lemma}\label{lem:whole-banach-small-on-curves}
  For any $\eps>0$, the set $V\supset S$ obtained from Lemma \ref{lem:old-small-on-curves} satisfies
  \begin{equation}
    \label{eq:growth-on-curves}
    R(V, \gamma, \delta) \geq T(\gamma(l))-T(\gamma(0)) -\eps,
  \end{equation}
  for any $l\geq 0$ and any Lipschitz $\gamma\colon [0,l] \to B$ with $(T\circ \gamma)'\geq 0$ almost everywhere.
\end{lemma}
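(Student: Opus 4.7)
The plan is to reduce to Lemma \ref{lem:old-small-on-curves} by replacing the portion of $\gamma$ that lies outside $\Omega$ with straight-line segments inside $\Omega$, exploiting convexity of $\Omega$. Let $J := \gamma^{-1}(\Omega)$, a compact subset of $[0,l]$. If $J = \emptyset$, then $\gamma$ never enters $V \subset \Omega$, so $R(V,\gamma,\delta) \geq \int_0^l (T\circ\gamma)' = T(\gamma(l)) - T(\gamma(0))$ and the inequality is trivial. Otherwise set $a = \min J$, $b = \max J$; on $[0,a) \cup (b,l]$ the curve lies outside $V$, so the first integral in $R(V,\gamma,\delta)$ accounts for at least $(T(\gamma(a)) - T(\gamma(0))) + (T(\gamma(l)) - T(\gamma(b)))$ of the desired total.

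For the middle portion, define $\tilde\gamma \colon [a,b] \to \Omega$ to agree with $\gamma$ on $J \cap [a,b]$ and, on each connected component $(a_i, b_i)$ of $[a,b] \setminus J$, to be the affine segment from $\gamma(a_i)$ to $\gamma(b_i)$. Convexity of $\Omega$ ensures $\tilde\gamma$ takes values in $\Omega$; $\tilde\gamma$ is Lipschitz with constant at most $\Lip \gamma$; and $(T\circ\tilde\gamma)' \geq 0$ a.e., since on each gap the segment is affine with net $T$-change $\int_{a_i}^{b_i}(T\circ\gamma)' \geq 0$. Applying Lemma \ref{lem:old-small-on-curves} to $\tilde\gamma$ yields $R(V,\tilde\gamma,\delta) \geq T(\gamma(b)) - T(\gamma(a)) - \eps$.

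It remains to show $R(V,\gamma,\delta)|_{[a,b]} \geq R(V,\tilde\gamma,\delta)$, which I establish gap-by-gap. The curves coincide on $J \cap [a,b]$. On a gap $(a_i, b_i)$, the $\tilde\gamma$-contribution is at most $\int_{a_i}^{b_i}(T\circ\gamma)' + \delta\|T\|\,\|\gamma(b_i) - \gamma(a_i)\|$, since $(T\circ\tilde\gamma)' \geq 0$ integrates over the gap to $\int_{a_i}^{b_i}(T\circ\gamma)'$ and the inserted segment has length $\|\gamma(b_i)-\gamma(a_i)\|$; whereas $\gamma$ on $(a_i, b_i)$ lies outside $\Omega \supset V$, so its contribution is the full $\int_{a_i}^{b_i}(T\circ\gamma)' + \delta\|T\|\int_{a_i}^{b_i}\Lip(\gamma,\cdot)$, and the triangle inequality gives $\int_{a_i}^{b_i}\Lip(\gamma,\cdot) \geq \|\gamma(b_i) - \gamma(a_i)\|$. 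Summing over gaps and combining with the outside-$[a,b]$ estimate completes the proof.

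The main subtlety is this gap-by-gap comparison: the inserted affine segments may enter $V$, suppressing part of the first integral in $R(V,\tilde\gamma,\delta)$, but this only lowers $R(V,\tilde\gamma,\delta)$, so the desired inequality is preserved.
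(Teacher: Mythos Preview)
Your proof is correct and follows essentially the same approach as the paper: both arguments isolate the first and last times $\gamma$ meets $\Omega$, replace each excursion outside $\Omega$ between those times by the straight-line chord (which stays in $\Omega$ by convexity), compare $R$ gap-by-gap using that $\gamma$ avoids $V$ on the gaps while the chord is shorter, and then invoke Lemma~\ref{lem:old-small-on-curves} on the resulting curve in $\Omega$. Your treatment is in fact slightly more carefully written than the paper's, which states the gap comparison somewhat loosely.
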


\begin{proof}
    Fix a $\gamma$ as in the statement of the lemma.
    We will modify $\gamma$ to construct a curve in $\Omega$.

	Let $m=\min \gamma^{-1}(\Omega)$ and $M=\max \gamma^{-1}(\Omega)$.
	Since $\Omega$ is compact, $(m,M)\setminus \gamma^{-1}(\Omega)$ is open.
	Suppose that $(a,b)$ is a connected component for some $a<b$, so that $\gamma(a),\gamma(b)\in \Omega$.
	We form $\tilde \gamma$ by altering $\gamma$ in $(a,b)$ to equal the straight line segment joining $\gamma(a)$ to $\gamma(b)$.
	Since $\Omega$ is convex, this segment is contained in $\Omega$.
	Also, since $T$ is linear, we have $(T\circ \tilde\gamma)'\geq 0$ almost everywhere.
	Further, $\gamma((a,b)) \cap V = \emptyset$, $(T\circ \tilde\gamma) \leq (T\circ \gamma)$ whenever they both exist and $\Lip(\tilde\gamma,t) \leq \Lip(\gamma,t)$ for all $t$.
	Therefore
	\[R(V, \tilde\gamma|_{(a,b)}, \delta) \leq R(V, \gamma|_{(a,b)}, \delta).\]
	By repeating this for each connected component of $(m,M)\setminus \gamma^{-1}(\Omega)$, we obtain $\tilde\gamma\colon [m,M]\to \Omega$ with
	\[R(V, \tilde\gamma, \delta) \leq R(V, \gamma|_{[m,M]}, \delta)\]
	and $(T\circ \tilde\gamma)'\geq 0$ almost everywhere.
	Therefore, by applying the conclusion of Lemma \ref{lem:old-small-on-curves} to $\tilde \gamma$,
	\begin{equation}\label{eq:this-new-equation}R(V, \gamma|_{[m,M]}, \delta) \geq R(V, \tilde\gamma, \delta) \geq T(\tilde\gamma(M)) -T(\tilde \gamma(m)) -\eps.\end{equation}

	Finally, we consider the end points of $\gamma$.
	Since
	\[\gamma([0,m)) \cap \Omega = \gamma((M,l]) \cap \Omega = \emptyset,\]
	the fundamental theorem of calculus gives
	\[R(V, \gamma|_{[0,m)}, \delta) \geq T(\gamma(m))-T(\gamma(0))\]
	and
	\[R(V, \gamma|_{(M,l]}, \delta) \geq T(\gamma(l))-T(\gamma(M)).\]
	Therefore, using \eqref{eq:this-new-equation} and the fact that $\tilde\gamma(m)=\gamma(m)$ and $\tilde\gamma(M)=\gamma(M)$,
	\begin{align*} R(V, \gamma, \delta) &= R(V, \gamma|_{[0,m)}, \delta) + R(V, \gamma|_{[m,M]}, \delta) + R(V, \gamma|_{(M,l]}, \delta)\\
	&\geq T(\gamma(m))-T(\gamma(0)) + T(\tilde\gamma(M)) -T(\tilde \gamma(m)) -\eps + T(\gamma(l))-T(\gamma(M))\\
	&= T(\gamma(l))-T(\gamma(0))-\eps,
	\end{align*}
	as required.
\end{proof}

We now use the previous lemma to construct a perturbation $f$ of $T$.
This construction uses the same general idea as the one in \cite[Lemma 6.3]{Bate_2014}, but we must make adjustments to fit our current purposes.
Other than technical differences that were introduced to fit the situation in \cite{Bate_2014}, the first difference is that $f$ is defined on the whole of $B$, rather than only the compact subset $\Omega$.
This is a consequence of the previous lemma and is necessary to perturb a Lipschitz function defined on the whole of $X$, rather than simply a compact subset.

The second difference is that we now obtain a stronger Lipschitz type bound on $f$, given in \eqref{eq:basic-perturbation-lipschitz-bound}.
This is necessary for us to obtain the required bound on the Lipschitz constant of the vector valued perturbation constructed in Section \ref{sec:vect-valu-pert}.
\begin{lemma} \label{l:basic-perturbation}
 For any $\eps>0$ there exists a Lipschitz function $f \colon B \to \R$ and a $\rho >0$ such that:
 \begin{itemize}
 \item For every $y,z\in B$,
   \begin{equation}
     \label{eq:basic-perturbation-lipschitz-bound}
     |f(y)-f(z)| \leq |T(y)-T(z)| + 3\delta\|T\| d(y,z);
   \end{equation}
  \item For every $x\in B$,
    \begin{equation}
      \label{eq:basic-perturbation-close-identity}
      |T(x) - f(x)|< \eps;
    \end{equation}
  \item For every $x\in S$ and $y,z \in B(x,\rho)\cap S$,
    \begin{equation}
      \label{eq:basic-perturbation-flat}
      |f(y) - f(z)| \leq 3\delta \|T\|d(y,z).
    \end{equation}
 \end{itemize}
\end{lemma}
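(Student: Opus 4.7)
My plan is to adapt the construction of \cite[Lemma 6.3]{Bate_2014}: define $f$ as an infimum of a path-cost functional based on $R(V,\cdot,\delta)$, modified to live on all of $B$ and to yield the sharpened Lipschitz bound \eqref{eq:basic-perturbation-lipschitz-bound}. First, I would apply Lemma \ref{lem:whole-banach-small-on-curves} with parameter $\eps' < \eps$ to produce a $\Omega$-open set $V \supset S$ satisfying $R(V,\gamma,\delta) \geq T(\gamma(l)) - T(\gamma(0)) - \eps'$ for every Lipschitz $\gamma\colon [0,l] \to B$ with $(T\circ\gamma)' \geq 0$ almost everywhere. Then I would set
\[f(x) := \inf\{T(\gamma(0)) + R(V,\gamma,\delta)\},\]
the infimum ranging over Lipschitz curves $\gamma$ ending at $x$ with $(T\circ\gamma)' \geq 0$ almost everywhere.

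Property \eqref{eq:basic-perturbation-close-identity} is immediate: the constant curve at $x$ yields $f(x) \leq T(x)$, while Lemma \ref{lem:whole-banach-small-on-curves} applied to any admissible $\gamma$ shows $T(\gamma(0)) + R(V,\gamma,\delta) \geq T(x) - \eps'$, so $f(x) \geq T(x) - \eps'$. To verify \eqref{eq:basic-perturbation-lipschitz-bound}, assume $T(y) \leq T(z)$ (the other case is symmetric). Concatenating a near-optimal admissible curve ending at $y$ with the line segment $[y,z]$ (which is monotone in $T$ since $T$ is linear and $T(y) \leq T(z)$) gives an admissible curve to $z$ whose added $R$-contribution is at most $(T(z)-T(y)) + \delta\|T\| d(y,z)$; hence $f(z) - f(y) \leq (T(z)-T(y)) + \delta\|T\| d(y,z)$. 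For the reverse inequality, I would truncate a near-optimal admissible curve for $f(z)$ at the first time $t^*$ with $T(\gamma(t^*)) = T(y)$ and connect to $y$ by the $T$-constant line segment from $\gamma(t^*)$ to $y$, using Lemma \ref{lem:whole-banach-small-on-curves} applied to the discarded tail to bound the resulting deficit.

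For \eqref{eq:basic-perturbation-flat}, I would choose $\rho > 0$ small enough that whenever $x \in S$ and $y, z \in B(x,\rho)$, the line segment $[y,z]$ lies entirely in $V$; this is possible since $V$ is a $\Omega$-open neighbourhood of the compact set $S$ and $\Omega$ is convex. In this regime the $(T\circ\cdot)'$ contribution of any inserted segment collapses to zero, and repeating the forward/reverse arguments above yields $|f(y) - f(z)| \leq 3\delta\|T\| d(y,z)$, the factor $3$ arising from combining the extension step on one side with the truncation-plus-connector step on the other.

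The main obstacle will be controlling the distance $d(\gamma(t^*), y)$ in the truncation step: since admissible curves in the infimum can be arbitrarily long, a naïve truncation leaves a connector of uncontrolled length. I expect to overcome this by restricting the infimum to a subclass of ``short'' or ``nearly straight'' admissible curves, for instance those whose length is bounded by a universal constant times the $T$-variation divided by $\delta\|T\|$, plus the target distance, so that near-optimal candidates admit connector segments of length $O(d(y,z))$. This is precisely the refinement of \cite[Lemma 6.3]{Bate_2014} that would secure the factor $3\delta\|T\|$ in \eqref{eq:basic-perturbation-lipschitz-bound}, rather than merely a bound involving an additive $\eps$ (which would fail to provide Lipschitz continuity at small distances).
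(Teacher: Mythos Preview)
Your setup, the definition of $f$, the proof of \eqref{eq:basic-perturbation-close-identity}, and the forward bound $f(z)-f(y)\leq |T(z)-T(y)|+\delta\|T\|d(y,z)$ via concatenation with the straight segment all match the paper. The choice of $\rho$ for \eqref{eq:basic-perturbation-flat} is also the same.

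The genuine gap is exactly where you locate it: the reverse inequality $f(y)-f(z)\leq 3\delta\|T\|d(y,z)$. Your truncate-and-discard scheme has two independent problems, and restricting the infimum to ``short'' curves solves neither. First, even a curve whose total length is controlled by its $T$-variation can cross the level set $\{T=T(y)\}$ at a point $\gamma(t^*)$ arbitrarily far from $y$ (take $B=\R^2$, $T=x_1$, $y=0$, $z=(1,0)$, and a straight segment from $(-N,N/2)$ to $z$: the length-to-$T$-variation ratio is bounded, yet $\gamma(t^*)\approx(0,N/2)$). So the connector is not $O(d(y,z))$. Second, using Lemma~\ref{lem:whole-banach-small-on-curves} on the discarded tail always costs an additive $\eps'$, which no restriction of the curve class removes; this kills the Lipschitz bound at small scales.

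The paper avoids both issues by \emph{not discarding} the tail. Instead, fix $v\in B$ with $T(v)=\|T\|\|v\|$, set $P(x)=x-\frac{T(x)}{T(v)}v$ (the projection onto $\ker T$ along $v$), and after the first time $t_0$ with $T(\gamma(t_0))=T(y)$ replace $\gamma$ by $t\mapsto \gamma(t_0)+P(\gamma(t)-\gamma(t_0))$. This projected tail stays on the level set $\{T=T(y)\}$, its $R$-cost is bounded by the original cost plus $\delta(T(z)-T(y))\leq \delta\|T\|d(y,z)$ (since $\Lip(P\circ\gamma,t)\leq \Lip(\gamma,t)+\frac{\|v\|}{T(v)}(T\circ\gamma)'(t)$), and its endpoint differs from $y$ by exactly $P(z)-P(y)$, which has norm at most $2d(y,z)$. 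The final connector then contributes $\leq 2\delta\|T\|d(y,z)$, and Lemma~\ref{lem:whole-banach-small-on-curves} is never invoked in this direction, so no additive $\eps'$ appears. This is the idea you are missing.
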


\begin{proof}
	First observe that if $T=0$, then $f=0$ satisfies the conclusion of the lemma.
	Therefore, we may suppose that $\|T\|\neq 0$.
	
  For $\eps>0$, let $V\supset S$ be the set, open in $\Omega$, given by Lemma \ref{lem:whole-banach-small-on-curves}.
  Define $f \colon B \to \R$ by
  \begin{equation*}
        f(x) = \inf R(V,\gamma,\delta) + T(\gamma(0)),
  \end{equation*}
  where the infimum is taken over all $l\geq 0$ and all Lipschitz $\gamma\colon [0,l] \to B$ with $(T\circ \gamma)' \geq 0$ almost everywhere and $\gamma(l)=x$.
  We call such a curve \emph{admissible} for $x$.
  For any $x\in B$, the curve consisting of the single point $x$ is admissible for $x$, and so $f(x)\leq T(x)$.
  Also, since $S$ is compact, there exists a $\rho>0$ such that $B(x,\rho)\cap\Omega \subset V$ whenever $x\in S$.
  We now show that $f$ satisfies the required conclusions for such a $\rho>0$.

  First, the fact that $f$ is Lipschitz will follow from \eqref{eq:basic-perturbation-lipschitz-bound} and the fact that $T$ is Lipschitz.
  Let $y,z\in B$ with $T(z) \geq T(y)$ and let $\gamma \colon [0,l]\to B$ be admissible for $y$.
  Define $\tilde \gamma \colon [0,l+1]\to B$ by
  \[\tilde \gamma(t) = \begin{cases}\gamma(t) & t\in [0,l]\\
      y+(t-l)(z-y) &t \in (l, l+1].\end{cases}\]
  Then $(T\circ \tilde \gamma)' \geq 0$ a.e.\ so that $\tilde \gamma$ is admissible for $z$ and so
  \begin{align}
    f(z) &\leq f(y) + R(V,\tilde\gamma|_{[l,l+1]}, \delta)\notag\\
    &\leq f(y) + \int_{[l,l+1]\setminus \tilde\gamma^{-1}(V)} (T \circ \tilde\gamma)' + \int_{[l,l+1]} \delta \|T\|\Lip(\tilde\gamma,\cdot)\notag\\
         &\leq f(y) + \delta \|T\|d(y,z) \text{ if } y,z \in B(x,\rho)\cap\Omega \subset V \label{eq:bound-in-ball}\\
    &\leq f(y) + T(z)-T(y)+ \delta\|T\| d(y,z) \text{ otherwise}. \label{eq:bound-global}
  \end{align}
  Note that \eqref{eq:bound-in-ball} uses the fact that $\tilde \gamma|_{[l,l+1]}$ is the straight line joining $y$ to $z$, which is contained in $\Omega$ and hence $V$, and \eqref{eq:bound-global} uses the fact that $(T\circ \tilde \gamma)' \geq 0$ a.e.

  To bound $f(y)$, let $\gamma \colon [0,l]\to B$ be admissible for $z$.
  If $T(\gamma(0)) \geq T(y)$ then
  \begin{equation}
  f(y) \leq T(y)
  \leq T(\gamma(0))
  \leq T(\gamma(0)) + R(V,\gamma,\delta).  \label{eq:gamma0-case}
  \end{equation}

  If $T(y) > T(\gamma(0))$, choose $v\in B$ with $T(v)=\|T\|\|v\| \neq 0$, define
  \begin{align*}
  	P\colon B &\to \ker T\\
  	x &\mapsto x-\frac{T(x)}{T(v)}v.
  \end{align*}
  and set
  \[t_{0} = \inf \{t\in [0,l] : T(\gamma(t)) \geq T(y)\}.\]
  Since $T(\gamma(0))<T(y) \leq T(z)$, we have $0<t_0\leq l$.
  
  Define $\tilde\gamma\colon [0,l+1]\to B$ by
  \[\tilde \gamma(t) = \begin{cases}\gamma(t) & t\in [0,t_{0}]\\
      \gamma(t_0) + P(\gamma(t)-\gamma(t_0)) &t \in (t_{0}, l]\\
      \tilde \gamma(l)+(t-l)(y-\tilde\gamma(l)) & t \in (l, l+1].\end{cases}\]
  Observe that $T(\tilde\gamma(t)) = T(y)$ for all $t\in [t_0,l+1]$.
  Indeed, for $t=t_0$ this follows from the definition of $t_0$; for $t\in (t_0,l]$ this is because $P$ maps into the kernel of $T$; for $t\in (l,l+1]$,
  \[T(\tilde\gamma(t)) = T(\tilde\gamma(l)) + (t-l)[T(y)-T(\tilde\gamma(l))]= T(y).\]
  In particular, $\tilde\gamma$ is an admissible curve for $y$, with $\tilde\gamma(0)=\gamma(0)$.  
  Secondly, observe that for almost every $t\in[t_{0},l]$,
  \[\Lip(\tilde\gamma,t) = \Lip(P\circ\gamma,t) \leq \Lip(\gamma,t) + \frac{\|v\|}{T(v)}(T\circ\gamma)'(t).\]
  Therefore
  \begin{align}R(V, \tilde \gamma|_{[t_{0},l]}, \delta) &= \int_{[t_0,l]\setminus \tilde\gamma^{-1}(V)} (T\circ\tilde\gamma)' + \int_{[t_0,l]} \delta\|T\| \Lip(\tilde\gamma,\cdot)\notag\\
  &\leq 0 + \int_{[t_0,l]} \delta\|T\| \Lip(\gamma,\cdot) + \int_{[t_0,l]}\frac{\delta\|T\|}{\|T\|} (T\circ \gamma)'\notag\\
   &\leq R(V, \gamma|_{[t_{0},l]},\delta) + \delta (T(\gamma(l))-T(\gamma(t_0)))\notag\\
   &\leq R(V,\gamma|_{[t_{0},l]},\delta) + \delta \|T\| d(y,z).\label{eq:use-this-align}
  \end{align}
  Further, a direct calculation shows that $\tilde \gamma(l+1)-\tilde \gamma(l)= P(z)-P(y)$ and so
  \[R(V, \tilde \gamma|_{[l,l+1]},\delta) = \delta \|T\|\|\tilde\gamma(l+1)-\tilde\gamma(l)\|\leq 2\delta\|T\| d(y,z).\]
  This and \eqref{eq:use-this-align} gives
  \begin{align}
    f(y) &\leq T(\tilde\gamma(0)) + R(V, \tilde\gamma|_{[0,t_0]},\delta) + R(V, \tilde \gamma|_{[t_0,l]},\delta) + R(V, \tilde \gamma|_{[l,l+1]},\delta)\notag\\
    &\leq T(\gamma(0)) + R(V, \gamma|_{[0,t_0]},\delta) + R(V,\gamma|_{[t_{0},l]},\delta) + \delta \|T\| d(y,z) + 2\delta\|T\| d(y,z)\notag\\
    &= T(\gamma(0)) +R(V,\gamma,\delta) + 3\delta \|T\|d(y,z).
    \label{eq:gammay-case}
  \end{align}
    By using \eqref{eq:gamma0-case} or \eqref{eq:gammay-case} depending on whether $T(\gamma(0)) \geq T(y)$ or not, and taking the infimum over all admissible $\gamma$ for $z$, we see that
    \begin{equation}
     f(y) \leq f(z) + 3\delta \|T\|d(y,z) \label{eq:y-bound}.
  \end{equation}
  Combining equations \eqref{eq:bound-in-ball}, \eqref{eq:bound-global} and \eqref{eq:y-bound} gives \eqref{eq:basic-perturbation-lipschitz-bound} and \eqref{eq:basic-perturbation-flat}.
  Note that for \eqref{eq:basic-perturbation-flat} we use the fact that $S\subset \Omega$.

  Finally, by applying \eqref{eq:growth-on-curves} to any admissible curve $\gamma$ for $x$,
  \[f(x) \geq T(x) - T(\gamma(0)) - \eps + T(\gamma(0)) = T(x)-\eps.\]
  Since $f(x)\leq T(x)$ for all $x\in B$, $f$ satisfies \eqref{eq:basic-perturbation-close-identity}.
\end{proof}

We conclude this section by describing the precise setting we will use this construction, without the fixed quantities in Notation \ref{not:1d-perturbation}.
Recall the definition of the set $E(W,\theta)$ given in Definition \ref{def:cones}.
\begin{proposition}
  \label{prop:basic-perturbation}
  Let $V$ be a finite dimensional Banach space, $F\colon X \to V$ Lipschitz, $0<\theta<1$ and $W \subset V$ a subspace.
  Suppose that a compact $S\subset X$ satisfies $\Ho(\gamma\cap S)=0$ for every $\gamma\in \Gamma(X)$ in the $F$-direction of $E(W,\theta)$.
  Further, suppose that $T\colon V\to \R$ is linear with $W\leq \ker T$.
  Then for any $\eps>0$ there exists a Lipschitz $f\colon X \to \R$ and a $\rho>0$ such that:
  \begin{itemize}
  \item For every $y,z\in X$,
    \begin{equation}\label{eq:main-construction-lipschitz}
      |f(y)-f(z)| \leq |T(F(y)-F(z))| + 3(1-\theta)\|T\| \Lip F d(y,z);
    \end{equation}
  \item For every $x\in X$,
    \begin{equation}\label{eq:prop-close}
      |T(F(x))-f(x)| < \eps;
    \end{equation}
  \item For every $x\in S$ and $y,z \in B(x,\rho) \cap S$,
    \begin{equation}\label{eq:prop-flat}
      |f(y)-f(z)| \leq 3(1-\theta) \|T\|\Lip F d(y,z).
    \end{equation}
  \end{itemize}
\end{proposition}

\begin{proof}
  First note that it suffices to prove a version of the proposition where we replace \eqref{eq:prop-close} with $|T(F(x))-f(x)| \leq \eps \|T\|\Lip F$.
  Indeed, the stated version follows from this statement since $\eps>0$ is arbitrary.
  After this modification, the hypotheses and conclusion are invariant under multiplying $F$ or $T$ by a constant.
  Therefore (since the result is true if $\Lip F$ or $\|T\|$ equals zero) we may suppose that $\Lip F =\|T\|=1$.

  Next we obtain the required Banach space structure.
  Let $\iota \colon X \to \ell_{\infty}(X)$ be an isometric embedding, for example the standard Kuratowski embedding, and let $B=V \times \ell_{\infty}(X)$.
  We equip $B$ with the norm
  \[\|(v,x)\| = \max\{\|v\|,\|x\|_\infty\}.\]
  Define $\iota^{*} \colon X \to B$ by
  \begin{equation*}
    \iota^{*}(x) = (F(x),\iota(x)).
  \end{equation*}
  Since $\Lip F = 1$, this embedding is an isometry, and so we may identify $X$ with its isometric copy in $B$.
  Moreover, on this isometric copy of $X$, $F$ agrees with the projection onto the first factor of $B$, which we also denote by $F$.
  In particular, $F$ is linear and $\Ho(\gamma \cap S) = 0$ for any $\gamma \in \Gamma(B)$ in the $F$-direction of $E(W,\theta)$.

  Now suppose that $T\colon V \to \R$ is linear with $W \leq \ker T$.
  Suppose that $\gamma \in \Gamma(B)$ with
  \[(T\circ F \circ\gamma)'(t)\geq (1-\theta)\|T\|\Lip(\gamma,t)\]
  for some $t\in \dom\gamma$.
  Then, since $W \leq \ker T$ and $\Lip F= 1$, if $w$ is a closest point of $W$ to $(F\circ\gamma)'(t)$,
\begin{align*} \|T\| d((F\circ\gamma)'(t),W) &\geq \|T((F\circ\gamma)'(t)-w)\| = |(T\circ F\circ\gamma)'(t)|\\
&\geq (1-\theta)\|T\|\Lip (\gamma,t) \geq (1-\theta) \|T\|\|(F\circ\gamma)'(t)\|.\end{align*}
  That is,
  \[d((F\circ\gamma)'(t),W) \geq (1-\theta) \|(F\circ\gamma)'(t)\|\]
  and so $(F\circ\gamma)'(t)\in E(W,\theta)$.
  Thus, since $\Ho(\gamma\cap S)=0$ for every $\gamma\in \Gamma(B)$ in the $F$-direction of $E(W,\theta)$, $\Ho(\gamma\cap S)=0$ for each $\gamma\in \Gamma(B)$ with
  \[(T\circ F \circ\gamma)(t)'\geq (1-\theta)\|T\|\Lip(\gamma,t)\]
  almost everywhere.

  Finally, to apply Lemma \ref{l:basic-perturbation} to $T\circ F$, we estimate $\|T\circ F\|$.
  Since $F$ is the projection onto the first co-ordinate,
  \[\|T\circ F\| = \sup_{\max(\|v\|,\|x\|_\infty)} T(v) = \sup_{\|v\|=1} T(v) = \|T\|.\]
  Thus, if $\gamma\in\Gamma(B)$ satisfies
  \[(T\circ F \circ\gamma)(t)'\geq (1-\theta)\|T\circ F\|\Lip(\gamma,t) \quad \text{for a.e.\ }t \in\dom\gamma,\]
  then it also satisfies
  \[(T\circ F \circ\gamma)(t)'\geq (1-\theta)\|T\|\Lip(\gamma,t) \quad \text{for a.e.\ }t \in \dom \gamma,\]
  and so $\Ho(\gamma\cap S)=0$.
  Therefore we may apply Lemma \ref{l:basic-perturbation} to $T\circ F$ with the choice $\delta=1-\theta$ to obtain a Lipschitz function $f\colon X \to \R$ and a $\rho>0$.
  The properties of $f$ we require are precisely those given by the lemma.
\end{proof}

\section{Perturbing coordinate functionals}\label{sec:vect-valu-pert}
Up to this point, the choice of norm on $\R^m$ has not been important;
The results of Section \ref{s:alberti-representations-rectifiability} are of a geometric nature concerning purely unrectifiable sets and the results of Section \ref{s:constructing-perturbations} concern real valued functions.
Throughout this section we will construct Lipschitz functions into a finite dimensional Banach space $V$, which we now fix.
We require precise control on the Lipschitz constant of these functions and so the choice of norm is very important.

In this section, all norms, Lipschitz constants and operator norms are taken with respect to $V$.

\subsection{Properties of finite dimensional Banach spaces}\label{sec:prop-finite-dimens}
We first fix some terminology for several quantitative properties of $V$.
This is required so that we may construct certain Lipschitz functions into $V$, coordinate by coordinate, in a way that the Lipschitz constant does not depend on the dimension of $V$.
For Euclidean targets, an orthonormal basis is sufficient (see Observation \ref{obs:tildek-euclidean}).
For non Euclidean targets (in particular $V=\ell_{\infty}^{m}$), something more involved is required.
The following constructions mimic the properties we require of Euclidean space in the general setting.

Fix a basis $b_1,\ldots,b_m$ of $V$ consisting of unit vectors.
We will write $b_i^{*}$ for the $i^{th}$ coordinate functional
\[b_{i}^{*}:  \sum_{i=1}^m \lambda_{i}b_{i} \mapsto \lambda_{i}.\]
By the compactness of the unit spheres in $V$ and $\ell_\infty^m$, there exists a $K_u \geq 1$ such that
\begin{equation}
  \label{eq:fin-dim-unconditional}
    \left\|\sum_{i=1}^ml_{i}b_{i}^{*}(x)b_{i}\right\| \leq K_{u} \|l\|_{\infty}\|x\|
\end{equation}
for each $x\in V$ and each $l\in \ell_\infty^m$.
Note that if $V=\ell_p^m$ for some $1\leq p \leq \infty$ and $b_1,\ldots,b_m$ is the standard basis, then $K_u=1$.

A \emph{projection} is a linear function $P$ on a vector space to itself such that $P^{2}=P$.
For an integer $d\geq 0$ and a $d$-dimensional subspace $W$ of $V$, the Kadets-Snobar theorem (\cite[Theorem 13.1.7]{Albiac_2016}) gives a projection $P\colon V \to W$ of norm at most $\sqrt{d}$.
We set $Q=\id - P\colon V \to \ker P$, so that $\|Q\|\leq \sqrt{d}+1$ and $x = P(x) + Q(x)$ for each $x\in V$.

  By applying the triangle inequality and \eqref{eq:fin-dim-unconditional}, we see that for any $x\in V$ and $l \in \ell_\infty^m$ with $\|l\|_\infty \leq 1$,
\begin{align}\label{eq:tildek-exists}
  \left\|P(x)+ \sum_{i =1}^m l_i b_i^*(Q(x))b_{i}\right\| &\leq \|P(x)\| + \left\|\sum_{i=1}^m l_i b_i^*(Q(x))b_{i}\right\|\notag \\
  &\leq \|P(x)\| + K_u\|l\|_\infty \|Q(x)\|\notag \\
  &\leq K_u (2\sqrt{d}+1) \|x\|.
\end{align}
This leads us to the following definition.
\begin{definition}
  \label{def:tilde-k}
For $V$ a finite dimensional Banach space and an integer $d\geq 0$, we let $\tilde K(V,d)$ be the least $K \geq 1$ for which the following is true:
There exists $K_d,K_p >0$ and, for any $d$-dimensional subspace $W$ of $V$, a basis $b_1,\ldots,b_m$ of $V$ consisting of unit vectors and projections $P\colon V\to W$ and $Q\colon V\to \ker P$ such that:
\begin{enumerate}
	\item \label{identity} $P(x)+ Q(x) = x$ for all $x\in V$;
  \item $\|b_i^*\|\leq K_p$ for each $1\leq i \leq m$;
  \item \label{norm-bound-pq} $\|P\|,\|Q\|\leq K_d$;
  \item For each $x\in V$ and $l\in \ell_\infty^m$ with $\|l\|_\infty \leq 1$,
\begin{equation}
  \label{eq:used-basis-constant}
  \left\|P(x)+ \sum_{i=1}^m l_i b_i^*(Q(x))b_{i}\right\| \leq K \|x\|.
\end{equation}
\end{enumerate}
\end{definition}

Note that \eqref{eq:tildek-exists} shows that $\tilde K(V,d) \leq K_u (2\sqrt{d}+1)$ for any finite dimensional Banach space $V$ and any $d\geq 0$.
We record some particular values of $\tilde K(V,d)$.
\begin{observation}\label{obs:tildek-euclidean}
For any $m\in \N$ and $d \geq 0$, $\tilde K(\ell_2^m,d)=1$.
Indeed, for any $d$-dimensional subspace $W\leq \ell_2^m$, choose an orthonormal basis $b_1,\ldots, b_m$ such that $b_{1},\ldots,b_{d}$ is a basis of $W$ and let
$P$ be the orthogonal projection onto $W$ and $Q$ the orthogonal projection onto $W^{\perp}$.
Then $K_{p}=K_{u}=K_{d}=1$ and, for any $x\in \ell_2^m$ and $l\in \ell_\infty^m$ with $\|l\|_\infty \leq 1$,
\begin{align*}\left\|P(x)+ \sum_{i=1}^m l_i b_i^*(Q(x))b_{i}\right\| &= \left\|\sum_{i=1}^d \langle x,b_i \rangle b_i + \sum_{i=d+1}^m l_i \langle x,b_i \rangle b_i\right\|\\
&\leq \|x\|,
\end{align*}
so that $\tilde K(\ell_2^m,d)=1$.
\end{observation}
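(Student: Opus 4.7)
The plan is to verify Definition~\ref{def:tilde-k} directly for $V = \R^m$ by exploiting the Hilbert structure, which trivialises all the quantitative constants appearing in the general discussion. First I would reduce to an integer: with $d$ the greatest integer strictly less than $s$ (and in the only nontrivial range, $d \le m$), the convention in Definition~\ref{def:tilde-k} gives $\tilde K(\R^m, s) = \tilde K(\R^m, d)$, so it suffices to handle integer $d$.

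For a fixed $d$-dimensional subspace $W \le \R^m$, the right choice is a basis adapted to the orthogonal decomposition $\R^m = W \oplus W^\perp$: take $b_1,\ldots,b_d$ orthonormal in $W$ and extend by orthonormal $b_{d+1},\ldots,b_m$ spanning $W^\perp$. Let $P$ be the orthogonal projection onto $W$ and $Q = \id - P$ the orthogonal projection onto $W^\perp$. Then $\|P\| = \|Q\| = 1$, and because the basis is orthonormal each coordinate functional satisfies $b_i^*(x) = \langle x, b_i \rangle$ with $\|b_i^*\| = 1$, so the constants $K_d$ and $K_p$ are both $1$.

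The key check is the inequality \eqref{eq:used-basis-constant} with $\tilde K = 1$. Because of the adapted basis, $b_i^*(Q(x)) = 0$ for $i \le d$ and $b_i^*(Q(x)) = \langle x, b_i\rangle$ for $i > d$, so the vector inside the norm collapses to
\[
P(x) + \sum_{i=1}^m l_i\, b_i^*(Q(x))\, b_i = \sum_{i=1}^d \langle x,b_i\rangle b_i + \sum_{i=d+1}^m l_i \langle x,b_i\rangle b_i.
\]
Applying Parseval to the orthonormal basis then bounds the Euclidean norm of this sum by $\|l\|_\infty \|x\|$ (the coefficients on the first $d$ terms being $1$, which is harmless in the relevant regime $\|l\|_\infty \ge 1$ built into $\tilde K \ge 1$). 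This is exactly the defining inequality with constant $1$.

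I do not anticipate any real obstacle: the whole point of the observation is that, in the Euclidean setting, both of the sources of inflation in the general bound \eqref{eq:tildek-exists}, namely the unconditional constant $K_u$ and the Kadets--Snobar factor $\sqrt d + 2$, collapse to $1$ because one can choose the basis simultaneously orthonormal and split along $W \oplus W^\perp$. The analogous statement for a general finite-dimensional Banach space $V$ must retain these factors, which is precisely why the subsequent results in Section~\ref{sec:vect-valu-pert} preserve 1-Lipschitz functions into $\R^m$ but not into an arbitrary $V$.
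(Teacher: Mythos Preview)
Your argument is essentially identical to the paper's: the same adapted orthonormal basis, the same orthogonal projections $P,Q$, the same reduction of the vector to $\sum_{i\le d}\langle x,b_i\rangle b_i + \sum_{i>d} l_i\langle x,b_i\rangle b_i$, and the same Parseval estimate. There is no substantive difference in approach.

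One small remark: your parenthetical about ``the relevant regime $\|l\|_\infty\ge 1$ built into $\tilde K\ge 1$'' is not quite the right justification. The constraint $\tilde K\ge 1$ in Definition~\ref{def:tilde-k} does not restrict $l$; as literally written, \eqref{eq:used-basis-constant} fails for $l=0$ whenever $P(x)\neq 0$, in the paper's version just as in yours. The paper simply asserts the inequality without comment, and the only place \eqref{eq:used-basis-constant} is ever invoked (Lemma~\ref{lem:vector-perturbation}) has $\|l\|_\infty\le 1$ and uses the bound $\tilde K\|x\|$, so the issue is cosmetic. You were right to notice something, but the resolution is that the definition is implicitly meant for $\|l\|_\infty\le 1$ (equivalently, the right-hand side should read $\tilde K\max\{1,\|l\|_\infty\}\|x\|$), not that $\tilde K\ge 1$ somehow forces $\|l\|_\infty\ge 1$.
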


\begin{observation}\label{obs:tildek-d0}
Suppose that $V$ has a basis $b_1,\ldots,b_m$ for which, in \eqref{eq:fin-dim-unconditional}, $K_u=1$.  For example, if $V=\ell_p^m$ for any $1\leq p \leq \infty$, then the standard basis satisfies this.
Then $\tilde K(V,0)=1$.
Indeed, the only $0$-dimensional subspace of $V$ is $W=\{0\}$ and so we may take $P=0$ and $Q$ to be the identity, so that $K_{d}=1$.
Then for any $l \in \ell_\infty^m$ with $\|l\|_\infty\leq 1$,
\begin{align*}\left\|P(x)+ \sum_{i=1}^m l_i b_i^*(Q(x))b_{i}\right\| &= \left\|\sum_{i=1}^m l_i b_i^*(x)b_{i}\right\|\\
&\leq \|x\|.
\end{align*}
As mentioned above, since $K_u=1$ for this basis, $\tilde K(\ell_p^m,d) \leq 2\sqrt{d}+1$ for any $d \geq 1$ and $1\leq p \leq \infty$.
Note that this is independent of $m$.
\end{observation}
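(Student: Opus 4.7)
The plan is to verify each of the three assertions in the statement by explicit construction, since the observation essentially describes its own proof. The main task is to supply the short verifications that the described choices meet all clauses of Definition \ref{def:tilde-k}.

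First I would confirm the hypothesis that the standard basis $e_1,\ldots,e_m$ of $\ell_p^m$ satisfies $K_u=1$. With this basis one has $e_i^*(x)=x_i$, so for any $x\in\ell_p^m$ and $l\in\ell_\infty^m$,
\begin{equation*}
\left\|\sum_{i=1}^m l_i e_i^*(x)e_i\right\|_p = \left(\sum_{i=1}^m |l_i x_i|^p\right)^{1/p} \leq \|l\|_\infty \|x\|_p,
\end{equation*}
with the obvious modification when $p=\infty$. Since $\|e_i^*\|=1$, the basis also witnesses a finite $K_p$.

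Next I would handle the case $0\leq s<1$, which by definition means $d=0$. The only $0$-dimensional subspace is $W=\{0\}$, so I must take $P=0$ and $Q=\id$; these are genuine projections with $\|P\|=0$ and $\|Q\|=1$, so $K_d=1$ works. Plugging this into the left-hand side of \eqref{eq:used-basis-constant} reduces it to $\|\sum_{i=1}^m l_i b_i^*(x)b_i\|$, which by the assumed $K_u=1$ is bounded by $\|l\|_\infty\|x\|$. Hence $\tilde K(V,s)\leq 1$, and since Definition \ref{def:tilde-k} requires $\tilde K\geq 1$, equality holds.

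Finally, for the more general claim $\tilde K(\ell_p^m,s)\leq \sqrt{s}+2$ when $s\geq 1$, I would simply invoke the chain of estimates that produced \eqref{eq:tildek-exists}: with the standard basis ($K_u=1$) and the Kadets--Snobar projection $P$ onto an arbitrary $d$-dimensional subspace $W\leq \ell_p^m$ (so $\|P\|\leq\sqrt{d}$ and $\|Q\|\leq\sqrt{d}+1$), inequality \eqref{eq:tildek-exists} yields $\tilde K(\ell_p^m,d)\leq \sqrt{d}+2$. Since $\tilde K(V,s)=\tilde K(V,d)$ for $d$ the greatest integer strictly less than $s$, we have $d<s$ and hence $\sqrt{d}+2\leq\sqrt{s}+2$, giving the claim. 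There is no real obstacle here beyond bookkeeping; the only point to watch is that the definition of $\tilde K$ must be verified clause by clause (normal basis, bounded coordinate functionals, bounded projections, and the key inequality), but for the standard basis of $\ell_p^m$ all of these are either immediate or supplied directly by Kadets--Snobar.
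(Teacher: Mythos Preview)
Your proposal is correct and follows essentially the same approach as the paper, which embeds its proof directly in the observation: take $P=0$, $Q=\id$ for $d=0$ and invoke $K_u=1$; then cite \eqref{eq:tildek-exists} with $K_u=1$ for the general bound. You simply fill in a few details the paper leaves implicit, namely the explicit verification that $K_u=1$ for the standard basis of $\ell_p^m$, the observation that $\tilde K\geq 1$ forces equality, and the passage from $\sqrt{d}+2$ to $\sqrt{s}+2$ via $d<s$.
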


To deduce one of our main theorems (Theorem \ref{thm:metric-perturbation}), we will apply the general perturbation constructed in the next subsection to the following function.
\begin{lemma}\label{lem:compact-perturbation-l_infty}
  Let $(X,d)$ be a compact metric space.
  For any $\eps>0$ there exists an $m\in\N$ and a 1-Lipschitz $F\colon X \to \ell_\infty^m$ such that
  \begin{equation*}
    \|F(x)-F(y)\|\geq d(x,y)-\eps
  \end{equation*}
  for each $x,y\in X$.
\end{lemma}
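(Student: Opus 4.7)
The plan is to prove this by a standard finite Kuratowski-type embedding. Since $X$ is compact, for any $\eps > 0$ I can choose a finite $(\eps/2)$-net, that is, points $x_1, \ldots, x_m \in X$ such that every $x \in X$ satisfies $d(x, x_i) \leq \eps/2$ for some $i$. Then I define
\begin{equation*}
  F \colon X \to \ell_\infty^m, \qquad F(x) = \bigl(d(x, x_1), d(x, x_2), \ldots, d(x, x_m)\bigr).
\end{equation*}

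Next I would verify the two required properties. For the 1-Lipschitz bound, each coordinate $x \mapsto d(x, x_i)$ is 1-Lipschitz by the triangle inequality, and taking the supremum norm on the image preserves this, giving $\|F(x) - F(y)\|_\infty \leq d(x,y)$. For the lower distortion bound, fix $x, y \in X$ and pick $x_i$ from the net with $d(x, x_i) \leq \eps/2$. The triangle inequality gives $d(y, x_i) \geq d(x, y) - d(x, x_i) \geq d(x, y) - \eps/2$, and hence
\begin{equation*}
  \|F(x) - F(y)\|_\infty \geq \bigl| d(y, x_i) - d(x, x_i) \bigr| \geq d(x, y) - \eps/2 - \eps/2 = d(x, y) - \eps,
\end{equation*}
as required.

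There is no real obstacle here; the only point that needs compactness is the existence of a finite $(\eps/2)$-net, and the rest is triangle-inequality bookkeeping. This is essentially the construction already alluded to in the introduction before Theorem \ref{thm:intro-main-theorem-2}, and it is a finite analogue of the Kuratowski embedding into $\ell_\infty$.
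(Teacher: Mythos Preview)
Your proof is correct and takes essentially the same approach as the paper: a finite Kuratowski-type embedding using distances to a net. The only cosmetic difference is that you use an $(\eps/2)$-net to hit the bound $d(x,y)-\eps$ directly, whereas the paper uses an $\eps$-net, obtains $d(x,y)-2\eps$, and then appeals to the arbitrariness of $\eps$.
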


\begin{proof}
  Given $\eps>0$ let $x_1,\ldots, x_m$ be a maximal $\eps$-net of $X$ and define
  \begin{align*}
    F &\colon X \to \ell_\infty^m\\
    F(x) &= (d(x,x_1),\ldots, d(x,x_m)).
  \end{align*}
  Then $F$ is 1-Lipschitz.
  Moreover, if $x,y\in X$, there exists $1\leq i \leq m$ such that $y\in B(x_i,\eps)$.
  In particular,
  \[\|F(x)-F(y)\| \geq |d(x,x_i)-d(y,x_i)| \geq d(x,x_i) -\eps \geq d(x,y) - 2\eps.\]
  Since $\eps>0$ is arbitrary, this completes the proof.
\end{proof}

\subsection{Constructing vector valued perturbations}\label{sec:vector-constructions}
Before giving the statement and proof of the main perturbation lemma, we discuss the details of constructing Lipschitz maps $G\colon V\to V$.
We will construct such functions coordinate by coordinate.
This is because the results of Section \ref{s:constructing-perturbations} concern real valued functions.

Let $b_1,\ldots,b_m$ be a basis of $V$ consisting of unit vectors.
For 1-Lipschitz functions $f_i\colon V\to \R$, consider the map
\[G=\sum_{i=1}^m f_i b_i.\]
By simply using the triangle inequality, the estimate we obtain on the Lipschitz constant of $G$ depends on $m$.
This is not useful for our application.
Moreover, even when $m=2$, the Lipschitz constant of $G$ can be very large; consider for example the basis $(1,0),(1,\epsilon)$ of $\ell_2^2$, with $\epsilon>0$ very small, and $f_1=1$ and $f_2=0$.
Then $G$ maps $(0,\epsilon)$ to $(-1,0)$.
Thus the Lipschitz constant of $G$ is at least $1/\epsilon$.

To obtain precise control, the solution is to require
\begin{equation*}\label{componen-lipschitz}
|f_i(x)-f_i(y)|\leq |b_i^*(x-y)|
\end{equation*}
for each $x,y\in V$ and each $1\leq i \leq n$ and apply \eqref{eq:fin-dim-unconditional}.
Under this condition, maps of the form of $G$ above have Lipschitz constant at most $K_u$, and, for any $x,y\in V$ and $1\leq i \leq n$, 
\begin{equation}\label{squish-components}
|b_i^*(G(x)-G(y))| = |f_i(x)-f_i(y)|.
\end{equation}

Given a $d$-dimensional subspace $W\leq V$, we wish for a similar construction that takes into account $W$.
For example, suppose that $V=\ell_2^m$ and let $b_i$ to be an orthonormal basis where the first $d$ vectors belong to $W$.
We wish to construct a $G$ where \eqref{squish-components} holds for those $d+1\leq i \leq n$ but 
\[|b_i^*(G(x)-G(y))| = |b_i^*(x-y)| \]
for all $1\leq i \leq d$.
In this case, defining
\[G(x) = \sum_{i=1}^d b_i^*(x)b_i + \sum_{i=d+1}^m f_i(x)b_i\]
has the required properties and is 1-Lipschitz.

However, in the case $V=\ell_\infty^m$ this does not work;
Choosing a basis that is related to an arbitrary $W\leq V$ may have a very large value of $K_u$, and so such a $G$ will have large Lipschitz constant.
Therefore, we must use the standard basis of $\ell_\infty^m$.

This is where we use the definition of $\tilde K(V,d)$ given in Definition \ref{def:tilde-k}.
Precisely, by \eqref{eq:used-basis-constant}, $\tilde K(V,d)$ bounds the Lipschitz constant of the function
\[G(x) =P(x) + \sum_{i=1}^m f_i(x) b_{i},\]
whenever each $f_i\colon V\to \R$ satisfies
\begin{equation}\label{eq:needs-lipschitz}
|f_i(x)-f_i(y)|\leq |b_i^*(Q(x-y))|
\end{equation}
for each $x,y\in V$ and each $1\leq i \leq n$.

Now let $X$ be a metric space and $F\colon X \to V$ Lipschitz.
For each $1\leq i \leq n$, applying Proposition \ref{prop:basic-perturbation} with $T_i=b_i^*\circ Q$ yields a Lipschitz function $f_i\colon X \to \R$, and we define the perturbation
\[\p(x) = P(F(x)) + \sum_{i=1}^m f_i(x)b_i.\]
Note that \eqref{eq:main-construction-lipschitz} gives
\[|f_i(x)-f_i(y)| \leq |b_i^*(Q(F(x)-F(y)))|\]
(up to some error), which corresponds to \eqref{eq:needs-lipschitz}.
The important fact to note in the conclusion of the following lemma is that $\Lip\p \leq \tilde K(V,d)\Lip F$ (except for an error term that can be made arbitrarily small by choosing $\theta$ close to 1).
In particular, for the examples discussed in Observations \ref{obs:tildek-euclidean} and \ref{obs:tildek-d0}, $\tilde K(V,d)$ depends only on $d$, and for the case $\ell_2^m$ equals 1.
\begin{lemma}
  \label{lem:vector-perturbation}
  Let $F\colon X \to V$ be Lipschitz.
  Suppose that for some $d$-dimensional $W\leq V$ and $0<\theta<1$, a compact $S\subset X$ satisfies $\Ho(\gamma\cap S)=0$ for each $\gamma\in \Gamma(X)$ in the $F$-direction of $E(W,\theta)$.
  There exists a constant $C_V$ depending only upon $V$ such that the following is true.
  For any $\eps>0$ there exists $\rho>0$ and a Lipschitz $\p \colon X \to V$ such that:
  \begin{itemize}
  \item The Lipschitz constant of $\p$ is at most $(\tilde K(V,d) + (1-\theta)C_{V}))\Lip F$;
  \item For every $x\in X$,
    \begin{equation}
      \label{eq:vector-perturbation-close-identity}
      \|\p(x)-F(x)\| < \eps;
    \end{equation}
  \item For every $x\in S$ and $y,z \in B(x,\rho)$,
    \begin{equation}
      \label{eq:vector-perturbation-flat}
      \|\p(y)-\p(z)\| \leq \|P(F(y))-P(F(z))\| + (1-\theta)C_V \Lip F d(y,z)
    \end{equation}
    for $P \colon V \to W$ a projection with norm $K_{d}$.
  \end{itemize}
\end{lemma}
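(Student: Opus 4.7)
The plan is to apply the real-valued construction of Corollary \ref{cor:basic-perturbation} to each of the coordinate functionals furnished by the definition of $\tilde K(V,d)$, and then assemble these into a single vector-valued perturbation $\p$. I would begin by unpacking $\tilde K(V,d)$ for the given $d$-dimensional subspace $W$: this yields a normal basis $b_1,\dots,b_m$ of $V$, constants $K_p,K_d$, projections $P\colon V\to W$ and $Q:=\id-P\colon V\to \ker P$ with $\|P\|,\|Q\|\le K_d$, and the inequality \eqref{eq:used-basis-constant}. Since $P|_W=\id$ forces $Q|_W=0$, each functional $T_i:=b_i^*\circ Q\colon V\to \R$ is linear with $\|T_i\|\le K_pK_d$ and satisfies $W\le \ker T_i$. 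For each $1\le i\le m$ I apply Corollary \ref{cor:basic-perturbation} to the pair $(T_i,F)$ with error parameter $\eps/m$, producing a Lipschitz $f_i\colon X\to\R$ and radius $\rho_i>0$ fulfilling the three bullets of that corollary. I then set
\begin{equation*}
\p(x):=P(F(x))+\sum_{i=1}^m f_i(x)\,b_i, \qquad \rho:=\min_i \rho_i.
\end{equation*}
Using $F(x)=P(F(x))+\sum_i T_i(F(x))\,b_i$, the difference $\p(x)-F(x)=\sum_i(f_i(x)-T_i(F(x)))\,b_i$ has $V$-norm at most $m\cdot\eps/m=\eps$, giving \eqref{eq:vector-perturbation-close-identity}.

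The main work is the Lipschitz bound, which is where $\tilde K(V,d)$ enters. For $y,z\in X$, write $a_i:=T_i(F(y)-F(z))=b_i^*(Q(F(y)-F(z)))$ and $c_i:=f_i(y)-f_i(z)$. The first bullet of Corollary \ref{cor:basic-perturbation} gives $|c_i|\le|a_i|+(1-\theta)C_V\|T_i\|\Lip F\,d(y,z)$, which allows the decomposition $c_i=\lambda_i a_i+\beta_i$ with $|\lambda_i|\le 1$ and $|\beta_i|\le(1-\theta)C_V\|T_i\|\Lip F\,d(y,z)$ (take $\lambda_i=c_i/a_i$, $\beta_i=0$ if $|c_i|\le|a_i|$; $\lambda_i=\operatorname{sgn}(c_ia_i)$, $\beta_i=c_i-\lambda_ia_i$ if $|c_i|>|a_i|$; and $\lambda_i=0$, $\beta_i=c_i$ if $a_i=0$). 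Then
\begin{equation*}
\p(y)-\p(z)=\Bigl[P(F(y)-F(z))+\sum_i \lambda_i\, b_i^*(Q(F(y)-F(z)))\,b_i\Bigr]+\sum_i \beta_i\,b_i.
\end{equation*}
Inequality \eqref{eq:used-basis-constant} applied to $x=F(y)-F(z)$ and $l=(\lambda_i)$, with $\|l\|_\infty\le 1$, bounds the bracketed term in $V$-norm by $\tilde K(V,d)\Lip F\,d(y,z)$, while $\|\sum_i\beta_i\,b_i\|_V\le \sum_i|\beta_i|\le m(1-\theta)C_VK_pK_d\Lip F\,d(y,z)$. Enlarging $C_V$ (still depending only on $V$) to absorb the factor $mK_pK_d$ yields the desired bound $\Lip \p\le(\tilde K(V,d)+(1-\theta)C_V)\Lip F$.

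For the flat bound on a neighbourhood of $S$, I invoke the third bullet of Corollary \ref{cor:basic-perturbation}: for $x\in S$ and $y,z$ in $B(x,\rho)\cap S$ we have $|c_i|\le(1-\theta)C_V\|T_i\|\Lip F\,d(y,z)$, so the decomposition with $\lambda_i=0$ and $\beta_i=c_i$ suffices, yielding $\p(y)-\p(z)=P(F(y)-F(z))+\sum_i\beta_i\,b_i$ and hence \eqref{eq:vector-perturbation-flat} after the same absorption into $C_V$. The one genuinely subtle step in the whole argument is the decomposition $c_i=\lambda_ia_i+\beta_i$: it converts the pointwise bound from Corollary \ref{cor:basic-perturbation}, which is not a priori linear in $y-z$, into a form compatible with the basis-level inequality \eqref{eq:used-basis-constant}, and is what allows $\Lip\p$ to be controlled by $\tilde K(V,d)\Lip F$ rather than by some worse basis-dependent constant.
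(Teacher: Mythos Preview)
Your proof is correct and follows essentially the same approach as the paper: both define $T_i=b_i^*\circ Q$, apply Corollary~\ref{cor:basic-perturbation} coordinatewise, set $\p=P\circ F+\sum_i f_i b_i$, and use the decomposition $c_i=\lambda_i a_i+\beta_i$ with $|\lambda_i|\le 1$ to feed into \eqref{eq:used-basis-constant} for the Lipschitz bound. Your restriction of the flat bound to $y,z\in B(x,\rho)\cap S$ matches what Corollary~\ref{cor:basic-perturbation} actually yields (and is all that is used downstream), even though the lemma as stated omits the ``$\cap S$''.
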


\begin{proof}
  Let $b_{1}, \ldots, b_{m}$ be the basis of $V$ and $P,Q$ be the two projections onto $W$ and $\ker P$ given by Definition \ref{def:tilde-k}.
  Recall that all elements of this basis have norm 1, $\|b^*_i\|\leq K_p$ for each $1\leq i \leq m$ and that $\|P\|,\|Q\|\leq K_d$.
  
  For each $1 \leq i \leq m$, set $T_i = b_i^* \circ Q$, so that each $T_i \colon V \to \R$ is linear with $\|T_{i}\| \leq K_{p}K_{d}$.
  We apply Proposition \ref{prop:basic-perturbation} to each $T_{i}$ to obtain a Lipschitz $f_{i} \colon X \to \R$ and $\rho_{i}>0$.
  We set $\rho = \min_{1\leq i \leq m}\rho_{i}>0$ and
  \[\p(x) = P(F(x)) + \sum_{i=1}^{m}f_{i}(x)b_{i}.\]
  We must establish the bound on the Lipschitz constant of $\p$ and prove equations \eqref{eq:vector-perturbation-close-identity} \eqref{eq:vector-perturbation-flat}.

  To determine the Lipschitz constant of $\p$, we use the definition of $\tilde K(V,d)$.
  Fix $y,z\in V$ and for a moment fix $1\leq i \leq m$.
  If $T_i(y)=T_i(z)$ then set $l_i=0$.
  Otherwise, let
  \[l_i = \min\left\{\frac{f_i(y)-f_i(z)}{T_i(y)-T_i(z)},1 \right\} \quad \text{if} \quad \frac{f_i(y)-f_i(z)}{T_i(y)-T_i(z)} \geq 0,\]
  and
  \[l_i = \max\left\{\frac{f_i(y)-f_i(z)}{T_i(y)-T_i(z)},-1 \right\} \quad \text{if} \quad \frac{f_i(y)-f_i(z)}{T_i(y)-T_i(z)} < 0.\]
  Then by construction $|l_i|\leq 1$ and, in any of the above three cases, equation \eqref{eq:main-construction-lipschitz} implies
  \begin{align}\label{coordinate-perturb}
    |f_i(y)-f_i(z)- l_{i}(T_i(y)-T_i(z))| &\leq (1-\theta)\|T_i\| \Lip F d(y,z) \notag\\
    &\leq (1-\theta)K_{p}K_{d} \Lip F d(y,z)\notag\\
    &= (1-\theta)C_{V} \Lip F d(y,z),
  \end{align}
  letting $C_V=K_pK_d$ in the final line.
  
  Using the triangle inequality and applying \eqref{coordinate-perturb} for each $1\leq i \leq n$ gives
  \begin{align*}
    \|\p(y)-\p(z)\| &= \left\| P(F(y)-F(z)) + \sum_{i=1}^{m} (f_{i}(y)-f_{i}(z))b_{i}\right\|\\
    &\leq \left\|P(F(y)-F(z)) + \sum_{i=1}^{m} l_{i}(T_i(y)-T_i(z))b_{i}\right\|\\
    &\qquad + \sum_{i=1}^m |f_i(y)-f_i(z) -l_i(T_i(y)-T_i(z))|\|b_i\|\\
    &\leq  \left\|P(F(y)-F(z)) + \sum_{i=1}^{m} l_{i}(T_i(y)-T_i(z))b_{i}\right\|\\
    &\qquad+ m(1-\theta)C_{V}\Lip F d(y,z).
  \end{align*}
  Substituting in for each $T_{i}$ (and absorbing the factor of $m$ into $C_{V}$) gives
  \begin{align*}
    \|\p(y)-\p(z)\| &\leq \left\|P(F(y)-F(z)) + \sum_{i=1}^m l_i b_i^*(Q(F(y) -F(z)))b_i\right\|\\
    &\qquad + (1-\theta)C_{V}\Lip F d(y,z).
  \end{align*}
  Finally, applying \eqref{eq:used-basis-constant} gives
  \begin{equation*}
    \|\p(y)-\p(z)\| \leq \tilde K(V,d)\|F(y)-F(z)\| + (1-\theta)C_{V}\Lip F d(y,z),
  \end{equation*}
  so that
  \[\Lip \sigma \leq (\tilde K(V,d) + (1-\theta)C_V) \Lip F,\]
  as required.

  The other two properties of $\sigma$ are simple consequences of the triangle inequality and the corresponding conclusions of Proposition \ref{prop:basic-perturbation}.
  Indeed, for any $x\in X$, Definition \ref{def:tilde-k} item \eqref{identity} and the definition of $b^*_i$ gives
  \begin{align*}F(x) &= P(F(x))+ Q(F(x))\\
    &= P(F(x)) + \sum_{i=1}^m b_i^*(Q(F(x)))b_i.
  \end{align*}
  Therefore, recalling that $\sigma$ is defined by
  \[\p(x) = P(F(x)) + \sum_{i=1}^{m}f_{i}(x)b_{i},\]
  and that $\|b_i\| \leq 1$ for each $1\leq i\leq n$, we have
  \begin{align*}
    \|\p(x)-F(x)\| &= \left\|\sum_{i=1}^{m}[f_{i}(x) - b_{i}^{*}(Q(F(x)))]b_{i}\right\|\\
                   &\leq \sum_{i=1}^{m} |f_{i}(x)-b_{i}^{*}(Q(F(x)))|\\
                   &= \sum_{i=1}^{m}|f_{i}(x)-T_{i}(F(x))|\\
                   &\leq m \eps,
  \end{align*}
  where the penultimate inequality simply uses the definition of $T_{i}$ and the final inequality uses \eqref{eq:prop-close}.
  Since $\eps>0$ is arbitrary, this gives \eqref{eq:vector-perturbation-close-identity}.

  Now suppose that $y,z \in B(x,\rho)$.  Since $y,z\in B(x,\rho) \subset B(x,\rho_i)$ for all $1\leq i \leq m$, the triangle inequality gives
  \begin{align*}
    \|\p(y)-\p(z))\| &\leq \|P(F(y))-P(F(z))\| + \sum_{i=1}^{m}|f_{i}(y)-f_{i}(z)|\\
    &\leq \|P(F(y))-P(F(z))\| + (1-\theta)C_{V} \Lip F d(y,z),
  \end{align*}
  where the final inequality follows by using \eqref{eq:prop-flat} for each $1\leq i \leq m$.
  This completes the proof.
\end{proof}

A general $\tilde A$ set has a finite decomposition into sets that satisfy the hypotheses of the previous lemma.
If such a set has finite measure, then up to a set of arbitrarily small measure, we may suppose that this decomposition consists of disjoint compact sets.
We will combine the corresponding perturbations we obtain from the previous lemma into a single perturbation using the following lemma.

At first thought, one may try to combine these perturbations into a single perturbation by a using a Lipschitz extension result.
However, in general, this will create a Lipschitz function with a greater Lipschitz constant than the original functions, which is not what we require.
In this lemma, the original Lipschitz function provides extra structure that enables us to maintain the same Lipschitz constant.
\begin{lemma}
  \label{lem:extension-lemma}
  Let $B$ be a normed vector space and $F\colon X \to B$ $L$-Lipschitz for some $L\geq 0$.
  Suppose that there exist $S_1,\ldots,S_M\subset X$ and $\rho_0>0$ such that the $B(S_i,\rho_0)$ are disjoint.
  Further suppose that for some $\eps>0$ there exist $L$-Lipschitz $\p_i \colon B(S_i,\rho_0)\to B$ with
  \begin{equation*}
    \|F(x)-\p_i(x)\|<\eps
  \end{equation*}
  for each $x\in B(S_i,\rho_0)$ and each $1\leq i \leq M$.
  Then there exists a $(L+2\eps/\rho_0)$-Lipschitz $\p\colon X\to V$ such that
  \begin{enumerate}
    \item \label{item:combine1}$\p(x)=\p_i(x)$ for each $x\in S_i$ and each $1\leq i \leq M$;
    \item \label{item:combine2}$\p(x)=F(x)$ if $d(x, \cup_i S_i)>\rho_0$;
    \item \label{item:combine3}$\|\p(x)-F(x)\|<\eps$ for each $x\in X$.
  \end{enumerate}
\end{lemma}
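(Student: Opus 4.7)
The plan is to define $\sigma$ by a partition-of-unity style interpolation between $F$ and the individual $\sigma_i$, using the fact that the disjointness of the $\rho_0$-neighbourhoods provides just enough room to patch the pieces together with the stated Lipschitz bound.

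First I would define, for each $1 \leq i \leq M$, the cut-off $\phi_i(x) = \max\{0, 1 - d(x, S_i)/\rho_0\}$, which is $(1/\rho_0)$-Lipschitz, equals $1$ on $S_i$, and vanishes outside $B(S_i, \rho_0)$. Since the neighbourhoods $B(S_i, \rho_0)$ are pairwise disjoint, at most one $\phi_i$ is nonzero at any given point, so one may unambiguously set
\[
\sigma(x) = F(x) + \sum_{i=1}^M \phi_i(x)(\sigma_i(x) - F(x)),
\]
where the $i$-th summand is understood as $0$ when $x \notin B(S_i, \rho_0)$. Properties \ref{item:combine1}--\ref{item:combine3} then follow directly: on $S_i$ we have $\phi_i = 1$ and $\phi_j = 0$ for $j \neq i$, so $\sigma = \sigma_i$; if $d(x, \cup_i S_i) > \rho_0$ every $\phi_i$ vanishes, so $\sigma(x) = F(x)$; and in general $\|\sigma(x) - F(x)\| \leq \phi_i(x) \eps \leq \eps$, where $i$ is the unique index with $\phi_i(x) > 0$ (none, if $x$ lies outside every ball).

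The bulk of the work is to verify $\Lip \sigma \leq L + 2\eps/\rho_0$, which I would do by splitting into cases according to which balls $x$ and $y$ lie in. If neither $x$ nor $y$ meets any ball then $\sigma = F$ on both, and the bound is immediate. If $x, y \in B(S_i, \rho_0)$ for the same $i$, expanding $\sigma = (1 - \phi_i) F + \phi_i \sigma_i$ as a telescoping sum yields
\[
\sigma(x) - \sigma(y) = (1-\phi_i(x))(F(x)-F(y)) + \phi_i(x)(\sigma_i(x) - \sigma_i(y)) + (\phi_i(x) - \phi_i(y))(\sigma_i(y) - F(y)),
\]
whose norm is bounded by $(L + \eps/\rho_0) d(x,y)$. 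For the remaining mixed cases the key observation is that if $y \notin B(S_i, \rho_0)$, then $d(y, S_i) \geq \rho_0$, and the triangle inequality $d(y, S_i) \leq d(x,y) + d(x, S_i)$ gives
\[
\phi_i(x) = 1 - d(x, S_i)/\rho_0 \leq d(x,y)/\rho_0.
\]
If $y$ lies outside every ball this immediately controls the single correction term $\phi_i(x)(\sigma_i(x) - F(x))$; if $y \in B(S_j, \rho_0)$ with $j \neq i$, the symmetric argument yields $\phi_j(y) \leq d(x,y)/\rho_0$, and substituting into
\[
\sigma(x) - \sigma(y) = (F(x) - F(y)) + \phi_i(x)(\sigma_i(x) - F(x)) - \phi_j(y)(\sigma_j(y) - F(y))
\]
and bounding $\|\sigma_i - F\|, \|\sigma_j - F\|$ by $\eps$ yields the desired estimate $(L + 2\eps/\rho_0) d(x,y)$.

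The one conceptually nontrivial point is this last bound. A naive estimate using only the set-level inequality $d(S_i, S_j) \geq \rho_0$ that follows from the disjointness of the neighbourhoods would control $\phi_i(x) + \phi_j(y)$ only by $1 + d(x,y)/\rho_0$, a quantity that does not tend to zero with $d(x,y)$ and so fails to give a multiplicative Lipschitz estimate. The actual gain comes from applying disjointness to $y$ itself as a point of $B(S_j, \rho_0)$ (and symmetrically to $x$), producing two \emph{independent} bounds each proportional to $d(x,y)$ and delivering the clean constant $2\eps/\rho_0$.
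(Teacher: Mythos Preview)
Your proof is correct and follows essentially the same partition-of-unity interpolation as the paper. The only difference is cosmetic: the paper chooses the cut-offs $\chi_i$ to vanish outside $B(S_i,\rho_0/2)$ (so they are $2/\rho_0$-Lipschitz with supports separated by $\rho_0/2$), whereas you let $\phi_i$ vanish outside $B(S_i,\rho_0)$; your choice makes the same-$i$ case slightly sharper and your handling of the $i\neq j$ case via the pointwise bound $\phi_i(x)\leq d(x,y)/\rho_0$ is in fact cleaner than the paper's corresponding estimate.
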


\begin{proof}
  The proof simply interpolates between the different $\p_i$.
  For each $1\leq i \leq M$ and $x\in B(S_i,\rho_0)$, write
  \begin{equation*}
    \p_i(x) = F(x) + E_i(x),
  \end{equation*}
  so that $\|E_i\|_\infty < \eps$.
  We define $\chi_i \colon X \to \R$ by
  \begin{equation*}
    \chi_i(x) = \frac{\max\{\rho_{0}/2-d(x,S_{i}),0\}}{\rho_{0}/2},
  \end{equation*}
  so that each $\chi_{i}$ equals 1 on $S_{i}$ and 0 off $B(S_{i},\rho_{0}/2)$ and so the $\chi_i$ have disjoint supports.
  Moreover, this allows us to define $\p \colon X \to B$ by
  \begin{equation*} 
    \p = F + \sum_{i=1}^M \chi_i E_i.
  \end{equation*}
  Thus properties \eqref{item:combine1}, \eqref{item:combine2} and \eqref{item:combine3} are automatically satisfied.
  It remains to check the Lipschitz constant of $\p$.

  To this end, let $y,z\in X$ and suppose that $1\leq i,j \leq M$ are such that $\chi_{i}(y)\neq 0$ and $\chi_{j}(z) \neq 0$.
  There exist at most one choice for each of $i,j$.
  If no such index exists, we choose either arbitrarily.
  First suppose that $i=j$.
  Then by the triangle inequality,
  \begin{align*}
    \|\p(y)-\p(z)\|
    &= \|F(y)-F(z) +\chi_i(y)E_i(y)-\chi_i(z)E_i(z)\|\\
    &\leq \|F(y)-F(z) + \chi_{i}(y)E_{i}(y) - \chi_{i}(y)E_{i}(z)\|\\
        &\quad + |\chi_{i}(y)-\chi_{i}(z)|\|E_{i}(z)\|\\
    &\leq \chi_i(y)\|\p_i(y)-\p_i(z)\| +(1-\chi_i(y))\|F(y)-F(z)\|\\
        & \quad + \frac{|d(y,S_{i})-d(z,S_{i})|} {\rho_{0}/2} \eps\\
    &\leq Ld(y,z) + \frac{2\eps}{\rho_0}d(y,z).
  \end{align*}

  Now suppose that $i\neq j$.
  In particular, this implies that
  \begin{equation}\label{eq:distance-apart}
    \frac{\rho_0}{2} - d(y,S_i)\leq d(y,z) \text{ and } \frac{\rho_0}{2} - d(z,S_j) \leq d(y,z).
  \end{equation}
  Indeed, suppose that the first inequality is false, then by first using the triangle inequality,
  \begin{align*}
    d(z,S_i) &\leq d(y,z) + d(y,S_i)\\
    &< \rho_0/2 -d(y,S_i) + d(y,S_i) = \rho_0/2,
  \end{align*}
  so that $\chi_i(z)\neq 0$, which contradicts any possibility of choosing $j$ as the index for $z$.
  The other inequality holds analogously.
  Thus, by the triangle inequality, \eqref{eq:distance-apart} and \eqref{item:combine3},
  \begin{align*}
    \|\p(y)-\p(z)\| &\leq \|F(y)-F(z)\| + |\chi_i(y)|\|E_i(y)\| + |\chi_j(z)|\|E_j(z)\|\\
    &\leq Ld(y,z) + 2 \frac{d(y,z)}{\rho_0}\eps.
  \end{align*}
  This establishes the required Lipschitz constant in this case.
\end{proof}

By combining the previous results, we obtain the following.

\begin{proposition}
  \label{prop:decomposition-vector-perturbation}
  Let $F\colon X \to V$ be Lipschitz.
  Suppose that for some $0<\theta< 1$, $M\in \N$ and each $1\leq i \leq M$, there exist disjoint compact sets $S_{i} \subset X$ and a $d$-dimensional $W_{i}\leq V$ such that $\Ho(\gamma\cap S_{i})=0$ for each $\gamma\in \Gamma(X)$ in the $F$-direction of $E(W_{i},\theta)$.
  There exists a $C_V\geq 1$ depending only upon $V$ such that the following is true.
  For any $\eps>0$ there exist a $\rho>0$ and a Lipschitz $\p\colon X\to V$ such that:
  \begin{itemize}
  \item The Lipschitz constant of $\p$ is at most $(\tilde K(V,d) +(1-\theta)C_V)\Lip F+\eps$;
  \item For every $x\in X$
    \begin{equation*}
      \|\p(x)-F(x)\| < \eps
    \end{equation*}
    and $\p(x)=F(x)$ if $d(x,\cup_i S_i)>\eps$;
  \item For each $1\leq i \leq M$ and $y,z\in S_{i}$ with $d(y,z)<\rho$,
    \begin{equation}
      \label{eq:decomposition-vector-perturbation-flat}
      \|\p(y)-\p(z)\| \leq \|P(F(y))-P(F(z))\| + (1-\theta)C_V \Lip F d(y,z)
    \end{equation}
    for $P_{i}\colon V \to W_{i}$ a projection with norm $K_{d}$.
  \end{itemize}
\end{proposition}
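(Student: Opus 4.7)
The plan is to apply Lemma \ref{lem:vector-perturbation} separately to each compact set $S_i$, obtaining local perturbations $\sigma_i$ of $F$, and then glue them together into a single perturbation $\sigma$ using Lemma \ref{lem:extension-lemma}. Since the $S_i$ are pairwise disjoint and compact, there exists $\rho_0>0$ (which we shall take no larger than $\eps$) such that the open neighbourhoods $B(S_i,\rho_0)$ are pairwise disjoint. The key point is that the $C_V$ in the conclusion can be taken to be the same constant provided by Lemma \ref{lem:vector-perturbation}, since it depends only on $V$.

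For each $1\leq i\leq M$, I would apply Lemma \ref{lem:vector-perturbation} to $F$, $S_i$ and $W_i$ with a small parameter $\eps'>0$ (to be fixed at the end) to produce $\rho_i>0$, a projection $P_i\colon V\to W_i$ with $\|P_i\|\leq K_d$, and a Lipschitz $\sigma_i\colon X\to V$ with
\[
  \Lip \sigma_i \leq L := (\tilde K(V)+(1-\theta)C_V)\Lip F,\qquad \|\sigma_i(x)-F(x)\|_V<\eps' \text{ for every }x\in X,
\]
and satisfying the flat bound \eqref{eq:vector-perturbation-flat} at every point of $S_i$ using the radius $\rho_i$. I would then set $\rho=\min_i \rho_i$.

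Next, since $\tilde K(V)\geq 1$, $F$ itself is $L$-Lipschitz, so Lemma \ref{lem:extension-lemma} applies to the $L$-Lipschitz $F$ and the restrictions $\sigma_i|_{B(S_i,\rho_0)}$: it yields a $(L+2\eps'/\rho_0)$-Lipschitz $\sigma\colon X\to V$ such that $\sigma=\sigma_i$ on each $S_i$, $\sigma=F$ outside $B(\cup_i S_i,\rho_0/2)$, and $\|\sigma(x)-F(x)\|_V<\eps'$ everywhere. Choosing first $\rho_0\leq\eps$ (using the disjointness of the $S_i$) and then $\eps'$ sufficiently small that both $\eps'<\eps$ and $2\eps'/\rho_0<\eps$ hold, the first two bulleted items of the proposition are satisfied.

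For the third bullet, observe that for $y,z\in S_i$ the combined map satisfies $\sigma(y)=\sigma_i(y)$ and $\sigma(z)=\sigma_i(z)$ by property \eqref{item:combine1} of Lemma \ref{lem:extension-lemma}. If moreover $d(y,z)<\rho\leq\rho_i$, then taking $x=y\in S_i$ the pair $y,z$ both lie in $B(x,\rho_i)$, so the flat conclusion \eqref{eq:vector-perturbation-flat} of Lemma \ref{lem:vector-perturbation} gives exactly \eqref{eq:decomposition-vector-perturbation-flat} with the projection $P_i$. There is no real obstacle here beyond bookkeeping: the principal care is in the order of quantifiers, namely that $\rho_0$ must be chosen (from the disjointness and compactness of the $S_i$) \emph{before} selecting $\eps'$, so that the perturbation tolerance $\eps'$ can be made small relative to $\rho_0$ to absorb the $2\eps'/\rho_0$ correction into the extra $\eps$ in the Lipschitz bound.
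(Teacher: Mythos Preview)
Your proposal is correct and follows essentially the same approach as the paper: choose $\rho_0\le\eps$ separating the disjoint compact $S_i$, apply Lemma \ref{lem:vector-perturbation} on each piece with a tolerance $\eps'$ small relative to $\rho_0$, and glue via Lemma \ref{lem:extension-lemma}, absorbing the $2\eps'/\rho_0$ increment into the extra $\eps$ in the Lipschitz bound. The paper simply fixes $\eps'=\eps\rho_0/2$ explicitly, whereas you leave it as a final choice; your added remark that $F$ itself is $L$-Lipschitz (since $\tilde K(V)\geq 1$) is exactly what is needed to invoke Lemma \ref{lem:extension-lemma} with the common constant $L$.
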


\begin{proof}
  Note that it suffices to prove the result for sufficiently small $\eps>0$ and so we fix $0<\eps<1/2$.
  Since the $S_{i}$ are a finite number of disjoint compact sets, there exists a $0<\rho_{0}<\eps$ such that the $B(S_{i},\rho_{0}$) are disjoint.
  We set $\eps'=\eps\rho_0/2<\eps$.

  For each $1\leq i \leq M$ let $\p_{i}\colon B\to V$ and $\rho_{i}>0$ be obtained by applying Lemma \ref{lem:vector-perturbation} to $S_{i}$ with the choice of $\eps'$ and let $\rho = \min_{1\leq i \leq N}\rho_{i}>0$.
  Further, we apply Lemma \ref{lem:extension-lemma} to combine these functions into a single Lipschitz function $\p\colon B \to V$.
  The conclusion of the proposition follows from the conclusions of these two lemmas, noting that combining the functions increases the Lipschitz constant by at most $2\eps'/\rho_0 <\eps$.
\end{proof}

Finally, we demonstrate how our constructed perturbation deforms the set $S$.
\begin{lemma}
  \label{lem:reduces-measure}
  Let $S\subset X$ be Borel and $F,\p\colon X \to V$ Lipschitz.
  Suppose that for some $\eps,\rho>0$ there exists a $d$-dimensional $W\leq V$ such that, for each $y,z\in S$ with $d(y,z)<\rho$,
  \begin{equation}
    \label{eq:flat-in-reduces-measure}
    \|\p(y)-\p(z)\| \leq \|P(F(y))-P(F(z))\| + \eps d(y,z),
  \end{equation}
  for $P\colon V\to W$ a projection with norm $K_{d}$.
  Then, for any real number $s>d$,
  \begin{equation*}
  \Hs(\p(S)) \leq \eps^{s-d}C_{d,s,V,F}\Hs(S),
  \end{equation*}
  for $C_{d,s,V,F}$ a constant depending only upon $d$, $s$, $V$ and $\Lip F$.
\end{lemma}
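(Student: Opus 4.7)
The plan is to estimate $\Hs(\p(S))$ from above by constructing an efficient cover of $\p(S)$ from an efficient cover of $S$, exploiting the fact that the ``$F$-component'' of $\p$ visible in \eqref{eq:flat-in-reduces-measure} is constrained to the $d$-dimensional subspace $W$.

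Fix $\delta'>0$ small and choose $\delta < \min(\rho,\delta'/(2\eps))$. Take an arbitrary cover $\{S_j\}$ of $S$ by sets of diameter less than $\delta$ with $\sum_j \diam(S_j)^s$ close to $\Hs(S)$. For each fixed $j$, the image $P(F(S_j))$ lies in $W$ and has diameter at most $K_d\Lip F\,\diam(S_j)$. Since $(W,\|\cdot\|_V)$ is a $d$-dimensional normed space, standard volume comparison in finite-dimensional Banach spaces gives a cover of $P(F(S_j))$ by $N_j$ sets of diameter at most $\eps\,\diam(S_j)$, where
\begin{equation*}
N_j \leq C_{d,V}\left(\frac{K_d\Lip F}{\eps}\right)^{d}.
\end{equation*}
Pulling this cover back through $P\circ F$ decomposes $S_j$ into pieces $S_{j,k}$, $1\leq k\leq N_j$.

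On any such $S_{j,k}$, every pair $y,z$ satisfies $d(y,z)<\rho$ and $\|P(F(y))-P(F(z))\|_V\leq \eps\,\diam(S_j)$, so by \eqref{eq:flat-in-reduces-measure}
\begin{equation*}
\|\p(y)-\p(z)\|_V \leq \eps\,\diam(S_j) + \eps\,\diam(S_j) = 2\eps\,\diam(S_j) < \delta'.
\end{equation*}
Thus $\{\p(S_{j,k})\}$ is a cover of $\p(S)$ by sets of diameter less than $\delta'$, and
\begin{equation*}
\Hs_{\delta'}(\p(S)) \leq \sum_{j,k} (2\eps\,\diam(S_j))^s \leq 2^s C_{d,V}(K_d\Lip F)^d \,\eps^{s-d} \sum_j \diam(S_j)^s.
\end{equation*}

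Taking the infimum over covers $\{S_j\}$ yields $\Hs_{\delta'}(\p(S))\leq 2^s C_{d,V}(K_d\Lip F)^d\,\eps^{s-d}\Hs_\delta(S)$, and letting $\delta'\to 0$ (which forces $\delta\to 0$) gives the desired inequality with
\begin{equation*}
C_{d,s,V,F} = 2^s C_{d,V}(K_d \Lip F)^d.
\end{equation*}
There is no substantial obstacle: the argument is a single application of the metric covering definition of $\Hs$ combined with a finite-dimensional covering number estimate in $W$. The only point requiring some care is verifying that covering numbers in the $d$-dimensional Banach space $(W,\|\cdot\|_V)$ scale like $(R/r)^d$ with a constant depending only on $d$ and (the norm of) $V$, which follows from the equivalence of $\|\cdot\|_V$ with the Euclidean norm on $W$ and standard volume arguments; crucially the exponent on $\eps$ is $s-d>0$, so the estimate is genuinely small when $\eps$ is small.
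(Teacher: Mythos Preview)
Your proof is correct and follows essentially the same approach as the paper's: cover $S$ at scale $\delta<\rho$, then for each piece $S_j$ cover $P(F(S_j))\subset W$ by $\sim\eps^{-d}$ sets using that $W$ is $d$-dimensional, pull back, and apply \eqref{eq:flat-in-reduces-measure} to bound the diameters of the images under $\p$. The only cosmetic difference is the scale at which you cover $P(F(S_j))$ (you use $\eps\,\diam S_j$ while the paper uses $\eps K_d\Lip F\,\diam S_j$), which simply shifts where the $K_d\Lip F$ factor lands in the final constant.
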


\begin{proof}
  Note that, if $\Hs(S) = \infty$, then there is nothing to prove and so we may suppose that $\Hs(S)<\infty$.
  For any $0<\delta<\rho$ we cover $S$ by sets $S_{i}$, of diameter at most $\delta$ such that
  \begin{equation}\label{eq:Si-approximate-Hausdorff}
    \sum_{i\in\N}(\diam S_{i})^{s} \leq \Hs(S) +\delta.
  \end{equation}

  We will use the $\p(S_{i})$ to create a finer covering of $\p(S)$.
  To this end, fix $i\in \N$.
  Then $P(F(S_{i}))\subset W$ is a set of diameter $\Lip P \Lip F \diam S_{i}$ contained in a $d$-dimensional subspace of $V$.
  Therefore, it may be covered by $M=C_{V,d} \eps^{-d}$ sets $T_{1},\ldots,T_{M}$ of diameter
  \[\eps \Lip P \Lip F \diam S_{i}.\]
  (Indeed, this is true if $V$ were Euclidean, and $V$ is $C_{V}$-biLipschitz to Euclidean space.)
  For each $1\leq j \leq M$, \eqref{eq:flat-in-reduces-measure} gives
  \begin{align}
    \diam \p((P\circ F)^{-1}(T_{j})\cap S_{i}) &\leq \diam T_{j} + \eps \diam (P\circ F)^{-1}(T_{j})\cap S_{i}\notag\\
                                               &\leq \eps \Lip P \Lip F \diam S_{i} + \eps\diam S_{i}\label{eq:small-diam}\\
                                               &\leq (\Lip P \Lip F +1)\eps \delta\label{eq:applicable-for-hausdorff}.
  \end{align}
  Since
  \begin{equation*}
    \p(S_{i}) = \bigcup_{j=1}^{M} \p((P\circ F)^{-1}(T_{j}) \cap S_{i}),
  \end{equation*}
  if we set $\delta' = \delta\eps(\Lip P \Lip F+1)$, then \eqref{eq:applicable-for-hausdorff} shows that this decomposition may be used to bound $\mathcal H^s_{\delta'}$.
  Using \eqref{eq:small-diam} and the fact that $M=C_{V,d} \eps^{-d}$, this gives
  \begin{align*}
    \mathcal H^{s}_{\delta'}(\p(S_{i})) &\leq \sum_{j=1}^M (\eps \diam S_i (\Lip P\Lip F +1))^s\\
    &\leq C_{V,d} \eps^{-d} (\eps\diam S_{i}(\Lip P \Lip F +1))^{s}\\
    &=C_{V,d} \eps^{s-d}(\Lip P \Lip F +1)^{s}(\diam S_{i})^{s}.
  \end{align*}
  Thus, by \eqref{eq:Si-approximate-Hausdorff},
  \begin{align*}
    \mathcal H^{s}_{\delta'}(\p(S)) &\leq C_{V,d}\sum_{i\in\N} \eps^{s-d}(\Lip P \Lip F +1)^{s}(\diam S_{i})^{s}\\
    &\leq C_{V,d} \eps^{s-d} (\Lip P \Lip F +1)^{s}(\Hs(S)+\delta).
  \end{align*}

  Since $\delta>0$ and hence $\delta'>0$ is arbitrary, we obtain \[\Hs(\p(S))\leq C_{V,d} \eps^{s-d}(\Lip P\Lip F+1)^{s}\Hs(S).\]
  Recall that $\|P\| \leq K_{d}$, so that the constant has the required form.
\end{proof}

To conclude, we summarise the results of this section.
Recall the notion of an $\tilde A$ set given in Definition \ref{def:atilde}.
\begin{theorem}\label{thm:general-perturbation}
  Let $V$ be a finite dimensional Banach space and $F\colon X \to V$ Lipschitz.
  For an integer $d \geq 0$ and a real number $s>d$, let $S \in \tilde A(F,d)$ have finite $\Hs$ measure.

  Then for any $\eps>0$ there exists a $(\tilde K(V,d) \Lip F+\eps)$-Lipschitz $\p\colon X \to V$ such that
  \begin{enumerate}
  \item \label{i:main-thm-perturbation} $\|\p(x)-F(x)\|<\eps$ for each $x\in X$ and $\p(x)=F(x)$ whenever $d(x,S)>\eps$;
  \item \label{i:main-thm-measure} $\Hs(\p(S))<\eps$.
  \end{enumerate}
\end{theorem}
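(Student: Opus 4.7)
The plan is to combine the decomposition afforded by $S \in \tilde A(F,d)$ with Proposition \ref{prop:decomposition-vector-perturbation} and Lemma \ref{lem:reduces-measure}. The main idea is to choose the cone-width parameter $\theta$ very close to $1$, so that both the additive overhead in the Lipschitz constant of the perturbation and, via the exponent $s - d > 0$, the Hausdorff measure of $\p(S)$ become small.

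Given $\eps > 0$, I would first fix $\theta \in (0,1)$ so close to $1$ that $(1-\theta) C_V \Lip F < \eps/2$ and
\[
\bigl((1-\theta) C_V \Lip F\bigr)^{s-d} C_{d,s,V,F}\, \Hs(S) < \eps/2,
\]
where $C_V$ and $C_{d,s,V,F}$ are the constants from Proposition \ref{prop:decomposition-vector-perturbation} and Lemma \ref{lem:reduces-measure} respectively; this is possible because $s > d$ and $\Hs(S) < \infty$. Since $S \in \tilde A(F,d) \subset \tilde A(F,d,\theta)$, there is a Borel decomposition $S = S_1 \cup \dots \cup S_M$ with $d$-dimensional subspaces $W_i \leq V$ such that every $\gamma \in \Gamma(X)$ in the $F$-direction of $E(W_i,\theta)$ satisfies $\Ho(\gamma \cap S_i) = 0$. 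By inner regularity of $\Hs \llcorner S$ (valid because $X$ is complete and $\Hs(S) < \infty$), I would replace each $S_i$ by a compact $K_i \subset S_i$ (still inheriting the curve-nullity condition) and arrange that $N := S \setminus \bigcup_i K_i$ satisfies $\Hs(N) < \eta$, where $\eta > 0$ will be chosen below.

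Applying Proposition \ref{prop:decomposition-vector-perturbation} to the compact sets $K_1,\dots,K_M$ with parameter $\eps/2$ produces a Lipschitz $\p \colon X \to V$ of Lipschitz constant at most $\tilde K(V) \Lip F + \eps$, with $\|\p(x) - F(x)\|_V < \eps$ for every $x$, with $\p \equiv F$ outside the $\eps/2$-neighbourhood of $\bigcup_i K_i \subset S$, and satisfying the flatness estimate \eqref{eq:decomposition-vector-perturbation-flat} on each $K_i$ (for sufficiently close points) with constant $(1-\theta)C_V \Lip F$. Fixing $\eta > 0$ so that $(\tilde K(V) \Lip F + \eps)^s\, \eta < \eps/2$, conclusion \ref{i:main-thm-perturbation} is immediate. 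For \ref{i:main-thm-measure}, Lemma \ref{lem:reduces-measure} applied to each $K_i$ gives $\Hs(\p(K_i)) \leq \bigl((1-\theta)C_V \Lip F\bigr)^{s-d} C_{d,s,V,F}\, \Hs(K_i)$, so summing over $i$ and using our choice of $\theta$ yields $\Hs(\p(\bigcup_i K_i)) < \eps/2$; meanwhile the trivial bound $\Hs(\p(N)) \leq (\Lip \p)^s \Hs(N) < \eps/2$ finishes the estimate.

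The main obstacle is the bookkeeping that simultaneously pins down three small parameters (the cone width $1-\theta$, the residual Hausdorff mass $\eta$, and the auxiliary parameter fed into Proposition \ref{prop:decomposition-vector-perturbation}): the Lipschitz constant of $\p$ must be fixed \emph{before} $\eta$ can be chosen, since the crude Lipschitz estimate is the only handle on $\p(N)$. The exponent $s - d > 0$ is essential here: it is what lets the flatness constant $(1-\theta)C_V\Lip F$ yield a genuine gain in $\Hs$-measure, whereas the raw Lipschitz bound alone would not suffice.
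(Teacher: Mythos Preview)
Your proposal is correct and follows essentially the same route as the paper's proof: choose $\theta$ close to $1$, pass to disjoint compact pieces $K_i\subset S_i$ via inner regularity, apply Proposition~\ref{prop:decomposition-vector-perturbation} to build $\p$, and then combine Lemma~\ref{lem:reduces-measure} on each $K_i$ with the crude Lipschitz bound on the residual set $N$. The only omissions are cosmetic---you do not mention the trivial case $m\leq d$ and do not explicitly record that the $K_i$ are disjoint (needed for Proposition~\ref{prop:decomposition-vector-perturbation}), but both are immediate.
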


\begin{proof}
  We will prove the Theorem for an arbitrary $0<\eps'<1$, which we now fix.
  Choose $0<\theta <1$ sufficiently close to 1 and $0<\eps<\eps'/2$ sufficiently small such that $(1-\theta)C_V\Lip F+\eps <\eps'$, where $C_{V}$ is the constant appearing in Proposition \ref{prop:decomposition-vector-perturbation}.
  We will impose further constraints on the size of $\eps>0$ (depending only upon $d$, $s$, $V$ and $F$) at the end of the proof.
  Note that, if $m\leq d$, then the result is immediate.
  Indeed, because $s>d \geq m$ we have $\Hs(F(S))=0$ and so choosing $\p=F$ suffices.
  Otherwise, by the definition of an $\tilde A(F,d)$ set, there exists a disjoint Borel decomposition $S = S_{1}\cup\ldots\cup S_M$ and $d$-dimensional subspaces $W_{i}\leq V$ such that each $S_i$ satisfies $\Ho(\gamma \cap S_i)=0$ for each $\gamma \in \Gamma(X)$ in the $F$-direction of $E(W_i,\theta)$.
  We also fix $\eta>0$ to be chosen at the end of the proof (in a way depending only upon $d$, $s$, $V$ and $F$).
  Then, since $\Hs(S)<\infty$, there exist compact $S_i'\subset S_i$ such that $\Hs(S\setminus \cup_i S'_{i})<\eta$.
  Note that we also have $\Ho(\gamma \cap S'_i)=0$ for each $\gamma \in \Gamma(X)$ in the $F$-direction of $E(W_i,\theta)$ for each $1\leq i \leq M$.

  We now have all of the requirements to apply Proposition \ref{prop:decomposition-vector-perturbation} to $\cup_i S'_{i}$ and $F$.
  This gives a $\rho>0$ and a Lipschitz $\p\colon X\to V$ such that:
  \begin{enumerate}
  \item \label{item:mainproof1} The Lipschitz constant of $\p$ is at most
  \[(\tilde K(V,d) +(1-\theta)C_V)\Lip F+\eps \leq \tilde K(V,d) \Lip F + \eps';\]
  \item \label{item:mainproof2} For every $x\in X$, $\|\p(x)-F(x)\|< \eps$ and $\p(x)=F(x)$ if $d(x,\cup_i S'_i)>\eps$ and hence if $d(x,S)>\eps$;
  \item \label{item:mainproof3} For each $1\leq i \leq M$, and $y,z\in S_{i}$ with $d(y,z)<\rho$,
    \begin{align*}
      \|\p(y)-\p(z)\| &\leq \|P(F(y))-P(F(z))\| + (1-\theta)C_V \Lip F d(y,z)\\
      &\leq \|P(F(y))-P(F(z))\| + \eps d(y,z),
    \end{align*}
    for $P_{i}\colon V\to W_{i}$ a projection with norm $K_{d}$.
  \end{enumerate}

  Points \eqref{item:mainproof1} and \eqref{item:mainproof2} now allow us to deduce all of the required properties of the theorem except for bounding the measure of the image, which we deduce from \eqref{item:mainproof3} and Lemma \ref{lem:reduces-measure}.
  Indeed, \eqref{item:mainproof3} is precisely the hypotheses required to apply Lemma \ref{lem:reduces-measure} to each $S'_{i}$, and so we deduce that
  \begin{equation*}
    \Hs(\p(S'_{i})) \leq \eps^{s-d} C_{d,s,V, F}\Hs(S'_{i})
  \end{equation*}
  for each $1\leq i \leq M$.
  Therefore,
  \begin{align*}
    \Hs(\p(S)) &\leq \Hs \left(\p\left(S\setminus \bigcup_{i=1}^{M} S'_{i}\right)\right) + \sum_{i=1}^{M}\Hs(\p(S'_{i}))\\
               &\leq (\Lip \p)^s \Hs \left(S\setminus \bigcup_{i=1}^{M} S'_{i}\right) + \eps^{s-d}C_{d,s,V,F}\sum_{i=1}^{M}\Hs(S'_{i})\\
    &\leq \eta (\Lip \sigma)^s + \eps^{s-d}C_{d,s,V,F}\Hs(S).
  \end{align*}
  Since $s>d$, we may choose $\eps,\eta$ sufficiently small such that this quantity is less than $\eps'$.
\end{proof}

\section{Typical Lipschitz functions}\label{sec:typical}
In this section we will consider typical Lipschitz functions defined on a metric space, equipped with the topology of uniform convergence.
Precisely, we will consider the following spaces.
\begin{definition}
  \label{def:lipschitz-space}
  For a metric space $Y$, let $\Lip(X,Y)$ be the vector space of all bounded Lipschitz functions $f\colon X\to Y$ equipped with the supremum norm.
  Note that, even if $Y$ is complete, $\Lip(X,Y)$ is not.
  However, for $L \geq 0$ the closed subspace $\Lip(X,Y,L)$ consisting of all $L$-Lipschitz $f\in\Lip(X,Y)$ is a complete metric space whenever $Y$ is complete.
  For example, this is true whenever $Y$ is a finite dimensional Banach space.

  Note that the space $\Lipo$ discussed in the introduction is $\Lip (X,\ell_2^m,1)$.
\end{definition}

A subset $R$ of a metric space $Y$ is \emph{residual} if it contains a countable intersection of open dense sets.
Recall that the Baire Category Theorem states that a residual subset of a complete metric space is dense.
Also, by definition, residual sets are closed under taking countable intersections and supersets.
Thus, residual sets form a suitable notion of ``generic points'' in a complete metric space.
When dealing with a set of continuous functions with the supremum norm, it is common to say that a certain property is \emph{typical} if the set of functions with the property is a residual set.

If a finite dimensional Banach space $V$ and an integer $d$ are chosen so that $\tilde K(V,d)=1$, then the results from the previous section perturb any element of $\Lip(X,V,L)$ into a function that is \emph{almost} in $\Lip(X,V,L)$,
the only problem being the arbitrarily small increase in the Lipschitz constant.
This can easily be corrected with the following simple scaling argument.
\begin{lemma}
  \label{lem:remove-small-increase-in-lipschitz}
  Let $V$ be a normed vector space and $L > 0$.
  For any $\eps>0$ and $f\in \Lip(X,V,L)$, there exist a $\delta>0$ and a $g \in \Lip(X,V,L-\delta)$ such that $\|f-g\|<\eps$.
\end{lemma}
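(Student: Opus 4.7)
The plan is to scale $f$ slightly toward the zero vector. Specifically, I would set $g = \lambda f$ for some $\lambda \in (0,1)$ chosen close to $1$, and then verify the two required properties directly.

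First I would dispose of the trivial case $\|f\|_\infty = 0$: here $f \equiv 0$, so $g = 0$ lies in $\Lip(X,V,L-\delta)$ for any $\delta\in(0,L)$, and $\|f-g\|=0<\eps$. In the main case $\|f\|_\infty > 0$, pick any $\lambda\in(0,1)$ satisfying
\begin{equation*}
(1-\lambda)\,\|f\|_\infty < \eps,
\end{equation*}
for instance $\lambda = 1 - \min\{1/2,\,\eps/(2\|f\|_\infty)\}$. Set $g = \lambda f$ and $\delta = (1-\lambda)L > 0$.

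The verification is then immediate: since $f$ is $L$-Lipschitz and $V$ is a normed vector space, $g = \lambda f$ is $\lambda L = (L-\delta)$-Lipschitz, so $g \in \Lip(X,V,L-\delta)$; and
\begin{equation*}
\|f-g\|_\infty = (1-\lambda)\,\|f\|_\infty < \eps.
\end{equation*}

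No step here is genuinely hard; the only minor subtlety is remembering that $\|f\|_\infty$ is finite (because $f\in\Lip(X,V,L)$ is bounded by definition) so that $\lambda$ can indeed be chosen to achieve the required inequality, and handling the degenerate $f\equiv 0$ case separately so that $\delta$ can be taken strictly positive.
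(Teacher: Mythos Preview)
Your proposal is correct and takes essentially the same approach as the paper: both scale $f$ toward the origin by a factor $\lambda\in(0,1)$, with the paper writing $g=(L-\delta)f/L$ and choosing $\delta$ directly, while you parameterize by $\lambda$ and set $\delta=(1-\lambda)L$. The verifications and the handling of the degenerate case $f\equiv 0$ are likewise identical in spirit.
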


\begin{proof}
  For any $\eps>0$ and $f\in \Lip(X,V,L)$, let $\delta = \eps/2L\|f\|$ and set $g=(L-\delta)f/L$ (if $\|f\|=0$ then the result is immediate).
  Then $g \in \Lip(X,V,L-\delta)$ and, for any $x\in X$,
  \begin{equation*}
    \|f(x)-g(x)\| = \left(1- \frac{L-\delta}{L}\right)\|f(x)\| = \frac{\delta}{L}\|f(x)\| < \eps,
  \end{equation*}
  as required.
\end{proof}

The results of the previous section establish the density of certain subsets of $\Lip(X,V,L)$.
We now show that these set are open, so that we may form residual sets.
\begin{lemma}
  \label{lem:small-covering-is-open}
  Let $X,Y$ be metric spaces, $L\geq 0$ and $\eps,s>0$.
  Suppose that $S\subset X$ is compact.
  The set of all $f\in \Lip(X,Y,L)$ for which $f(S)$ may be covered by open balls
  \begin{equation*}
    f(S) \subset \bigcup_{i\in\N} B(c_{i},r_{i})
  \end{equation*}
  with $\sum_i r_i^s<\eps$, is open.
\end{lemma}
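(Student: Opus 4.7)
The plan is to exploit the compactness of $S$ (and hence of $f(S)$) together with a small enlargement of the covering balls. Suppose $f \in \Lip(X,Y,L)$ belongs to the set in question, so that there exist $c_i \in Y$ and $r_i > 0$ with $f(S) \subset \bigcup_{i\in\N} B(c_i, r_i)$ and $\sum_i r_i^s < \eps$. The first step is to observe that $f(S)$ is compact as the continuous image of a compact set, and therefore admits a finite subcover; relabelling indices, one has $f(S) \subset \bigcup_{i=1}^N B(c_i, r_i)$, for which $\sum_{i=1}^N r_i^s < \eps$ holds automatically, being a subsum of the original convergent series.

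Next, since the map $t \mapsto \sum_{i=1}^N (r_i + t)^s$ is continuous at $t = 0$ (this is where reducing to a finite sum is essential) and takes value $\sum_{i=1}^N r_i^s < \eps$ there, I would choose $\delta > 0$ small enough that $\sum_{i=1}^N (r_i + \delta)^s < \eps$.

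Finally, I would verify that the open $\delta$-neighbourhood of $f$ in $\Lip(X,Y,L)$ lies entirely in the set. For any $g$ in this neighbourhood and any $x \in S$, pick an index $i$ with $f(x) \in B(c_i, r_i)$; then the triangle inequality yields
\[
\rho(g(x), c_i) \leq \rho(g(x), f(x)) + \rho(f(x), c_i) < \delta + r_i,
\]
so $g(x) \in B(c_i, r_i + \delta)$. Hence $g(S) \subset \bigcup_{i=1}^N B(c_i, r_i + \delta)$, with the $s$-powers of the radii summing to strictly less than $\eps$, so $g$ belongs to the set, establishing openness. There is no real obstacle here; the whole argument is a standard uniform-perturbation story, and the only subtlety is that one must first pass to a finite subcover before applying continuity of the sum, since no such continuity can be expected for an arbitrary countable sum of $s$-powers.
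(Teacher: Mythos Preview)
Your proof is correct. The paper's argument is a minor variant: instead of passing to a finite subcover and enlarging radii, it keeps the original cover $\bigcup_i B(c_i,r_i)$ and observes that, since $f(S)$ is compact and the union is open, there is a $\rho>0$ such that the entire $\rho$-neighbourhood of $f(S)$ remains inside $\bigcup_i B(c_i,r_i)$ (the compact set $f(S)$ has positive distance from the closed complement). Then any $g$ with $\|f-g\|<\rho$ has $g(S)$ contained in the \emph{same} cover, with the same radii and the same sum. This sidesteps both the finite-subcover step and the radius-enlargement step you use; in particular, your remark that one ``must first pass to a finite subcover'' is not quite right, since the paper's route avoids it entirely. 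Both arguments are standard and equally short.
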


\begin{proof}
  Let $f \in \Lip(X,Y,L)$ such that
  \[f(S) \subset \bigcup_{i\in\N} B(c_{i},r_{i}),\]
  for open balls $B(c_i,r_i)$ with $\sum_i r_i^s<\eps$.
  Since $S$ and hence $f(S)$ is compact, there exists a $\rho>0$ such that the $\rho$-neighbourhood of $f(S)$ is also contained in $\cup_{i} B(c_{i},r_{i})$.
  In particular, if $g\in B(f,\rho)$,
  \begin{equation*}
    g(S) \subset \bigcup_{i\in\N} B(c_{i},r_{i}).
  \end{equation*}
  Thus, the set of all such $f$ is open, as required.
\end{proof}

By a suitable countable decomposition into sets of the form in the previous lemma, we obtain the following.
\begin{theorem}
  \label{thm:general-residual}
  For $s>0$ let $S\subset X$ be $\Hs$-measurable with $\sigma$-finite $\Hs$ measure and let $d\in \N$ with $d<s$ and $L\geq 0$.
  Also, let $V$ be a finite dimensional Banach space with $\tilde K(V,d)=1$.
  Suppose that, for any Lipschitz $f\colon X \to V$, there exists an $N\subset S$ with $\Hs(N)=0$ such that $S\setminus N \in \tilde A(f,d)$.
  Then the set
  \[\{f\in \Lip(X,V,L): \Hs(f(S))=0\}\]
  is residual in $\Lip(X,V,L)$.
\end{theorem}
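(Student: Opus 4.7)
The plan is to exhibit $\{f \in \Lip(X,V,L) : \Hs(f(S)) = 0\}$ as a countable intersection of dense open subsets of $\Lip(X,V,L)$ and then invoke the Baire category theorem. Using $\sigma$-finiteness, first write $S = \bigcup_i S_i$ with each $\Hs(S_i) < \infty$; since $\{f : \Hs(f(S)) = 0\} = \bigcap_i \{f : \Hs(f(S_i)) = 0\}$, it suffices to treat the case $\Hs(S) < \infty$. Next, using inner regularity of $\Hs \llcorner S$ by compact sets, fix an increasing sequence of compact sets $K_1 \subset K_2 \subset \cdots \subset S$ with $\Hs(S \setminus \bigcup_j K_j) = 0$. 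Because any $f \in \Lip(X,V,L)$ is $L$-Lipschitz, $\Hs(f(S \setminus \bigcup_j K_j)) = 0$ automatically, so
\[
\{f : \Hs(f(S)) = 0\} = \bigcap_j \{f : \Hs(f(K_j)) = 0\}.
\]

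For each compact $K = K_j$ and pair $(k,n) \in \N^2$, I would consider the set $W_{k,n}$ of those $f \in \Lip(X,V,L)$ for which $f(K)$ admits a cover by open balls $B(c_l, r_l)$ with $r_l \leq 1/n$ and $\sum_l r_l^s < 1/k$. The proof of Lemma \ref{lem:small-covering-is-open} immediately adapts to show each $W_{k,n}$ is open: the radius bound and the sum bound are preserved since the balls themselves do not change when $f$ is perturbed slightly. Since a cover of $f(K)$ by balls of radius $\leq 1/n$ whose $s$-powers sum to less than $1/k$ gives $\Hs_{2/n}(f(K)) \leq 2^s/k$, passing $n \to \infty$ and $k \to \infty$ yields $\bigcap_{k,n} W_{k,n} \subset \{f : \Hs(f(K)) = 0\}$, reducing the whole argument to showing each $W_{k,n}$ is dense.

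For density, fix $f \in \Lip(X,V,L)$ and $\eps' > 0$. First apply Lemma \ref{lem:remove-small-increase-in-lipschitz} to obtain some $\delta > 0$ and $\tilde f \in \Lip(X,V,L-\delta)$ with $\|\tilde f - f\| < \eps'/2$. The hypothesis of the theorem applied to $\tilde f$ (identifying $V$ with $\R^m$ as vector spaces) produces a null set $N \subset S$ such that $S \setminus N \in \tilde A(\tilde f, d)$. Since $\Hs(S \setminus N) \leq \Hs(S) < \infty$, Theorem \ref{thm:general-perturbation} applied to $\tilde f$ and $S\setminus N$ with a parameter $\eps \leq \min(\delta, \eps'/2)$ (also chosen small in terms of $k,n,s$) yields $\p \colon X \to V$ with $\Lip \p \leq \tilde K(V,d)\Lip \tilde f + \eps = (L-\delta) + \eps \leq L$, $\|\p - \tilde f\| < \eps$, and $\Hs(\p(S \setminus N)) < \eps$. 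Combining with $\Hs(\p(N)) \leq L^s \Hs(N) = 0$ gives $\Hs(\p(K)) \leq \Hs(\p(S)) < \eps$. Because $\p(K)$ is compact and $\eps$ is small enough, one can cover $\p(K)$ by open balls of radius at most $1/n$ whose $s$-powers sum to strictly less than $1/k$, so $\p \in W_{k,n}$; and $\|\p - f\| < \eps'$, establishing density.

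The crucial constraint throughout, and the reason the hypothesis $\tilde K(V,d)=1$ is needed, is keeping the Lipschitz constant of the perturbation below $L$: a factor of $\tilde K(V,d)$ in the Lipschitz bound of Theorem \ref{thm:general-perturbation} would push $\p$ out of $\Lip(X,V,L)$, but under the assumption $\tilde K(V,d) = 1$ this factor disappears and the small additive slack is absorbed by the preliminary shrinking provided by Lemma \ref{lem:remove-small-increase-in-lipschitz}. With density and openness in hand, the Baire category theorem applied to the countable intersection over $j, k, n$ produces the desired residuality.
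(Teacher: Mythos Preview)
Your proposal is correct and follows essentially the same strategy as the paper: reduce to compact pieces via $\sigma$-finiteness and inner regularity, use Lemma~\ref{lem:small-covering-is-open} for openness, and combine Lemma~\ref{lem:remove-small-increase-in-lipschitz} with Theorem~\ref{thm:general-perturbation} (exploiting $\tilde K(V,d)=1$) for density. The only cosmetic difference is that you track an explicit radius bound in your sets $W_{k,n}$, whereas the paper uses the single family $R_{1/n}$ and extracts the radius bound \emph{a posteriori} from $\sum r_i^s<1/n$; also, you apply the $\tilde A$ hypothesis to the already-shrunk map $\tilde f$ rather than to $f$, which is in fact the cleaner order of operations.
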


\begin{proof}
  Note that if $L=0$ then there is nothing to prove and so we may suppose that $L>0$.
  We first prove the result under the additional assumption that $S$ is compact and has finite $\Hs$ measure.
  Under this assumption, for any $\eps>0$, Lemma \ref{lem:small-covering-is-open} shows that the set $R_\eps(S)$ of all $f\in \Lip(X,V,L)$ for which $f(S)$ may be covered by open balls
  \begin{equation*}
    f(S) \subset \bigcup_{i\in\N} B(c_{i},r_{i})
  \end{equation*}
  with $\sum_i r_i^s<\eps$, is open.

  To see that $R_\eps$ is dense, let $f\in \Lip(X,V,L)$ and let $S'$ be the full measure subset of $S$ that belongs to $\tilde A (f,d)$.
  Since $\tilde K(V,d)=1$, for any $\eps>0$, by combining Theorem \ref{thm:general-perturbation} and Lemma \ref{lem:remove-small-increase-in-lipschitz}, there exists a $\p \in \Lip(X,V,L)$ with $\|f-\p\|<\eps$ and $\Hs(\p(S'))<\eps$.
  Indeed, given $r>0$ we apply Lemma \ref{lem:remove-small-increase-in-lipschitz} to get a $\delta>0$ and a $g\in \Lip(X,V,L-\delta)$ with $\|f-g\|<r/2$.
  We then apply Theorem \ref{thm:general-perturbation} to $g$ with the choice $\eps=\min\{\eps, r/2, \delta\}$ to get a $\p \in \Lip(X,V,L)$ with $\|\p-g\|<r/2$ and $\Hs(\p(S'))<\eps$.
  Since $\p$ is Lipschitz, $\Hs(\p(S\setminus S'))=0$, so that $\Hs(\p(S))<\eps$ and hence $\p \in R_{\eps}$.
  In particular, $\p\in R_{\eps}$ and $\|\p - f\|<r$.
  Since $r>0$ is arbitrary, $R_{\eps}$ is dense.

  By combining these two facts, each $R_{\eps}$ is residual and hence so is
  \[R(S):=\bigcap_{i\in\N}R_{1/n}(S).\]
  If $f\in R_{1/n}(S)$ then
  \[\Hs_{1/n^{1/s}}(f(S)) \leq 1/n\]
  and so $\Hs(f(S))=0$ for any $f\in R(S)$.
  This proves the theorem for this special case.

  Now suppose that $S$ is simply $\Hs$-measurable with $\sigma$-finite $\Hs$ measure.
  Then by the inner regularity of measure, there is a decomposition
  \[S=N\cup \bigcup_{i\in\N}S_i\]
  where $\Hs(N)=0$ and each $S_i$ is compact with $\Hs(S_i)<\infty$.
  Since each $S_i$ is a subset of $S$, the hypothesis on $S$ is also true for each $S_i$.
  Thus, by the previous part of the proof, we know that each $R(S_i)$ is residual and hence so is
  \[R^*:=\bigcap_{i\in\N}R(S_i).\]

  If $f\in R^*$ then $\Hs(f(S_i))=0$ for each $i\in\N$ and so $\Hs(f(\cup_i S_i))=0$ too.
  Moreover, since $\Hs(N)=0$, we have $\Hs(f(N))=0$ for any $f\in \Lip(X,V,L)$.
  Therefore, $\Hs(f(S))=0$ for any $f\in R^*$.
\end{proof}

\section{Typical Lipschitz images of purely unrectifiable sets}\label{sec:proof-main-theorems}

We begin with the first theorem stated in the introduction.
Recall the definition of $\Lip(X,V,L)$ from Definition \ref{def:lipschitz-space}.
\begin{theorem}\label{thm:residual-euclidean-target}
  For $n\in \N$ suppose that $S\subset X$ is purely $n$-unrectifiable and has a countable measurable decomposition $S=\cup_i S_i$ where each $S_i$ satisfies \eqref{eq:lower-density} and $\Hn(S_i)<\infty$.
  Then for any $L \geq 0$ and any $m\in \N$ the set
  \[\{f\in \Lip(X,\ell_2^m,L): \Hn(f(X))=0\}\]
  is residual in $\Lip(X,\ell_2^m,L)$.
\end{theorem}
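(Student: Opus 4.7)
The plan is to reduce the statement to Theorem \ref{thm:general-residual} applied piece by piece to the decomposition $S = \bigcup_i S_i$, and then intersect the resulting residual sets. Throughout I will read $\Hs(f(X))=0$ in the statement as $\Hn(f(S))=0$, as the context and the introduction make clear.

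First I would fix $i$ and verify that $S_i$ satisfies the hypotheses of Theorem \ref{thm:general-residual} with $d=n-1$ and $s=n$. Since $S_i \subset S$ and $S$ is purely $n$-unrectifiable, so is $S_i$; together with $\Hn(S_i)<\infty$ and the density assumption \eqref{eq:lower-density}, condition \ref{i:summary-star} of Theorem \ref{thm:atilde-summary} applies. Hence for every Lipschitz $F\colon X \to \R^m$ there exists $N\subset S_i$ with $\Hn(N)=0$ such that $S_i\setminus N \in \tilde A(F,n-1)$, which is exactly the hypothesis required by Theorem \ref{thm:general-residual}.

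Next I would invoke Observation \ref{obs:tildek-euclidean}, which gives $\tilde K(\R^m,n-1)=1$. With this in hand, Theorem \ref{thm:general-residual} (applied with $V=\R^m$, $s=n$, $d=n-1$) yields that
\begin{equation*}
R_i := \{f\in \Lip(X,\R^m,L): \Hn(f(S_i))=0\}
\end{equation*}
is residual in $\Lip(X,\R^m,L)$ for each $i\in\N$.

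Finally, since residual sets are closed under countable intersections, the set $R:=\bigcap_{i\in\N} R_i$ is residual. For any $f\in R$, countable subadditivity of $\Hn$ gives $\Hn(f(S)) \leq \sum_i \Hn(f(S_i)) = 0$, completing the proof. There is no real obstacle here: the theorem is a direct repackaging of Theorem \ref{thm:general-residual} via the ``$\tilde A$-structure of purely unrectifiable sets with positive lower density'' (Theorem \ref{thm:atilde-summary}(\ref{i:summary-star})) and the fact that Euclidean targets have $\tilde K=1$. The only mild bookkeeping point is the passage from the single-piece statement to the $\sigma$-finite/countable-decomposition statement, which is handled by the same countable-intersection argument already used inside the proof of Theorem \ref{thm:general-residual}.
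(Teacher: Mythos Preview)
Your proposal is correct and follows essentially the same approach as the paper: invoke Theorem \ref{thm:atilde-summary}\ref{i:summary-star} for the $\tilde A(F,n-1)$ structure, use Observation \ref{obs:tildek-euclidean} to get $\tilde K(\R^m,\cdot)=1$, and feed these into Theorem \ref{thm:general-residual}. The only cosmetic difference is that the paper applies Theorem \ref{thm:general-residual} once to $S$ itself (implicitly using that $\tilde A(F,d)$ sets are closed under countable unions, so that $S\setminus N\in\tilde A(F,n-1)$ for $N=\bigcup_i N_i$), whereas you apply it to each $S_i$ separately and then intersect the residual sets; both routes are equally valid.
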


\begin{proof}
  By applying Theorem \ref{thm:atilde-summary} \eqref{i:summary-star} with $s=n$, we see that, for any Lipschitz $f\colon X \to \R^m$, there exists an $N\subset S$ with $\Hn(N)=0$ such that $S\setminus N \in \tilde A(f,n-1)$.
  By Observation \ref{obs:tildek-euclidean}, we know that $\tilde K(\ell_2^m,n)=1$. 
  Thus all of the hypotheses of Theorem \ref{thm:general-residual} are satisfied and its conclusion agrees with the conclusion of the theorem.
\end{proof}

When the purely unrectifiable set is a subset of some Euclidean space, we may use Theorem \ref{thm:atilde-summary} \eqref{i:summary-euclidean} and so do not need to assume \eqref{eq:lower-density}.
In fact, because the hypothesis and conclusion of \ref{thm:atilde-summary} \eqref{i:summary-euclidean} are invariant under re-norming, this holds in any finite dimensional Banach space.
\begin{theorem}\label{thm:residual-euclidean-domain}
	Let $V$ be a finite dimensional Banach space and, for $n\in \N$, let $S\subset V$ be purely $n$-unrectifiable and have $\sigma$-finite $\Hn$ measure.
  Then for any $L>0$ and $m\in\N$ the set
  \[\{f\in \Lip(V,\ell_2^m,L): \Hn(f(S))=0\}\]
  is residual in $\Lip(V,\ell_2^m,L)$.
\end{theorem}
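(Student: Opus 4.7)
The plan is to observe that this statement is the Euclidean-domain analogue of Theorem \ref{thm:residual-euclidean-target}, and that the proof of the latter adapts essentially verbatim once one switches the invoked clause of Theorem \ref{thm:atilde-summary}. Specifically, I would aim to verify the three hypotheses of Theorem \ref{thm:general-residual} with $X=\R^k$, $V=\R^m$, $s=n$, and $d=n-1$, and then read off the conclusion directly.

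First I would note that $S$ is $\Hn$-measurable with $\sigma$-finite $\Hn$ measure by assumption, so the measurability hypothesis of Theorem \ref{thm:general-residual} is immediate. Next, for an arbitrary Lipschitz $f\colon \R^k \to \R^m$, I would decompose $S=\bigcup_i S_i$ into countably many pieces of finite $\Hn$ measure and apply clause \ref{i:summary-euclidean} of Theorem \ref{thm:atilde-summary} to each $S_i$ to obtain a Borel $N_i \subset S_i$ with $\Hn(N_i)=0$ and $S_i \setminus N_i \in \tilde A(f, n-1)$. Setting $N := \bigcup_i N_i$ and using the fact (noted immediately after Definition \ref{def:atilde}) that $\tilde A(f, n-1)$ is closed under countable unions, one obtains $S \setminus N \in \tilde A(f, n-1)$ with $\Hn(N)=0$. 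Finally, Observation \ref{obs:tildek-euclidean} supplies $\tilde K(\R^m, n-1) = 1$. With all three hypotheses in hand, Theorem \ref{thm:general-residual} yields exactly the claimed residuality.

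There is no real obstacle beyond choosing the correct clause of Theorem \ref{thm:atilde-summary}: clause \ref{i:summary-euclidean} is tailored for Euclidean-ambient sets and removes the need for the density assumption \eqref{eq:lower-density} by drawing on the results of De~Philippis--Rindler and Alberti--Marchese together with the classical Besicovitch--Federer projection theorem (packaged here in Theorem \ref{thm:alberti-reps-projection-thm} and Corollary \ref{cor:alberti-reps-projection-cor}). All substantive work, including the perturbation construction of Section \ref{s:constructing-perturbations} and the openness argument of Lemma \ref{lem:small-covering-is-open}, is already absorbed into the earlier machinery, so the proof reduces to a clean verification of hypotheses.
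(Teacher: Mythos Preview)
Your proposal is correct and follows exactly the approach indicated by the paper, which simply remarks that the proof of Theorem~\ref{thm:residual-euclidean-target} goes through verbatim once clause~\ref{i:summary-star} of Theorem~\ref{thm:atilde-summary} is replaced by clause~\ref{i:summary-euclidean}. Your explicit decomposition of $S$ into finite-measure pieces and use of the closure of $\tilde A(f,n-1)$ under countable unions is a detail the paper leaves implicit, but it is indeed needed since Theorem~\ref{thm:atilde-summary} is stated for sets of finite $\Hn$ measure.
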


By using the $s\not\in \N$ case in Theorem \ref{thm:atilde-summary}, we prove the result for fractional dimension sets.
\begin{theorem}\label{thm:fractional-dimension}
  For $s\not\in\N$ let $S\subset X$ be $\Hs$-measurable with $\sigma$-finite $\Hs$ measure.
  Then for any $L \geq 0$ and any $m\in \N$ the set
  \[\{f\in \Lip(X,\ell_2^m,L): \Hs(f(X))=0\}\]
  is residual in $\Lip(X,\ell_2^m,L)$.
\end{theorem}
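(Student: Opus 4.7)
The plan is to reduce this to Theorem \ref{thm:general-residual} in exactly the same manner as Theorem \ref{thm:residual-euclidean-target}, only this time invoking the fractional-dimension bullet of Theorem \ref{thm:atilde-summary} in place of the positive lower density bullet. Concretely, let $d$ be the greatest integer strictly less than $s$, so $d < s$ because $s\notin\N$.

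First I would verify the two hypotheses of Theorem \ref{thm:general-residual} for the choice $V=\R^m$ (with its Euclidean norm). The hypothesis on $\tilde K$ is free: by Observation \ref{obs:tildek-euclidean}, $\tilde K(\R^m,s)=\tilde K(\R^m,d)=1$ for every $m\in\N$. For the $\tilde A$-structure hypothesis, fix an arbitrary Lipschitz $f\colon X\to\R^m$. Since $s\notin\N$, Theorem \ref{thm:atilde-summary} applies with no further assumption on $S$ (in particular no $\ast$-condition and no ambient Euclidean hypothesis is required), and produces a Borel $N\subset S$ with $\Hs(N)=0$ such that $S\setminus N\in\tilde A(f,d)$. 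This is precisely the hypothesis needed.

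With both hypotheses in hand, Theorem \ref{thm:general-residual} immediately yields that
\[
\{f\in\Lip(X,\R^m,L):\Hs(f(S))=0\}
\]
is residual in $\Lip(X,\R^m,L)$, which is the desired conclusion. (The statement as printed writes $f(X)$ in place of $f(S)$; I read this as a typographical slip, since the $\tilde A$ structure and the whole perturbation machinery from Section \ref{s:constructing-perturbations} and Section \ref{sec:vect-valu-pert} are formulated in terms of the distinguished subset on which one reduces the measure.)

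I do not expect any genuine obstacle: the work has all been done upstream. The only point that deserves a sentence of explanation in the final write-up is the appeal to the $s\notin\N$ clause of Theorem \ref{thm:atilde-summary}, which is precisely what removes the need for a density hypothesis like \eqref{eq:lower-density} and thereby allows the proof to proceed without additional assumptions on $S$ beyond $\sigma$-finiteness of $\Hs\llcorner S$ (which is used in Theorem \ref{thm:general-residual} to reduce to the compact, finite-measure case via inner regularity).
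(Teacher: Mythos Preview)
Your proposal is correct and matches the paper's approach exactly: the paper gives no separate proof for this theorem beyond the one-line remark ``By using the $s\not\in\N$ case in Theorem \ref{thm:atilde-summary}, we prove the result for fractional dimension sets,'' and your write-up simply unpacks that remark via Observation \ref{obs:tildek-euclidean} and Theorem \ref{thm:general-residual}. Your reading of the $f(X)$ versus $f(S)$ discrepancy as a typographical slip is also consistent with the paper (the same slip occurs in Theorem \ref{thm:residual-euclidean-target}).
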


If the set is purely 1-unrectifiable, then we prove our results without assuming \eqref{eq:lower-density} and also for many more targets.
\begin{theorem}\label{thm:residual-domain-1pu}
  For $s>0$ let $S\subset X$ have $\sigma$-finite $\Hs$ measure.
  Suppose that either $s \in \N$ and $S$ is purely 1-unrectifiable or $0<s<1$.
  Then for any $1\leq p \leq \infty$, $m\in \N$ and any $L \geq 0$, the set
  \[\{f\in \Lip(X,\ell_p^m,L): \Hs(f(S))=0\}\]
  is residual in $\Lip(X,\ell_p^m,L)$.
\end{theorem}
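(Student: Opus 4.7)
The plan is to deduce the theorem directly by combining Theorem \ref{thm:atilde-summary} with Theorem \ref{thm:general-residual}, choosing the integer parameter $d$ to be $0$ in both cases of the hypothesis. The key point is that the second alternative in the definition of purely $1$-unrectifiable (item \ref{i:summary-1pu} of Theorem \ref{thm:atilde-summary}) and the fractional case both allow us to take $d = 0$, and $\ell_p^m$ is exactly the class of targets for which Observation \ref{obs:tildek-d0} gives $\tilde K(\ell_p^m, 0) = 1$.

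First I would dispose of the trivial case $L = 0$. For the main argument, I would verify the two hypotheses of Theorem \ref{thm:general-residual} with $d = 0$ and $V = \ell_p^m$. For the $\tilde A$-hypothesis, fix an arbitrary Lipschitz $F\colon X \to \ell_p^m$ (which, after composition with a biLipschitz identification, is a Lipschitz map to $\R^m$). If $s \in \N$ and $S$ is purely $1$-unrectifiable, Theorem \ref{thm:atilde-summary} case \ref{i:summary-1pu} (applied to each piece of a countable decomposition of $S$ into sets of finite $\Hs$ measure provided by $\sigma$-finiteness) furnishes a null set $N \subset S$ with $S \setminus N \in \tilde A(F, 0)$. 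If $0 < s < 1$, then $s \notin \N$ so the same theorem applies with $d$ equal to the greatest integer strictly less than $s$, which is $0$. Either way, $S \setminus N \in \tilde A(F,0)$ for any Lipschitz $F$.

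For the norm hypothesis, Observation \ref{obs:tildek-d0} notes that the standard basis of $\ell_p^m$ has unconditional constant $K_u = 1$ for every $1 \leq p \leq \infty$, and hence $\tilde K(\ell_p^m, s) = 1$ for every $0 \leq s < 1$. In particular $\tilde K(\ell_p^m, 0) = 1$, which is the value used by Theorem \ref{thm:general-residual} when $d = 0$. Thus all hypotheses of Theorem \ref{thm:general-residual} are met, and its conclusion is exactly the statement of the present theorem.

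There is essentially no obstacle beyond bookkeeping: the substantive work is already done in the proof of Theorem \ref{thm:general-residual} (which itself relies on Theorem \ref{thm:general-perturbation} plus the density/openness arguments of Section \ref{sec:typical}) and in the structure theory assembled in Theorem \ref{thm:atilde-summary}. The only subtlety worth emphasising is why non-Euclidean targets $\ell_p^m$ are permissible here while not in Theorem \ref{thm:residual-euclidean-target}: the control of the Lipschitz constant through $\tilde K(V, d)$ is sharp only for $d = 0$ in a general $\ell_p^m$, and it is precisely the $1$-purely-unrectifiable and fractional hypotheses that reduce the relevant tangent dimension to $d = 0$ via Observation \ref{obs:pu-no-density} and Theorem \ref{thm:fractional-dimension-bound}.
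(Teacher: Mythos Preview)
Your proposal is correct and follows essentially the same route as the paper's proof: invoke Theorem~\ref{thm:atilde-summary} (case~\ref{i:summary-1pu} when $S$ is purely $1$-unrectifiable, the $s\notin\N$ case when $0<s<1$) to obtain $S\setminus N\in\tilde A(f,0)$, use Observation~\ref{obs:tildek-d0} for $\tilde K(\ell_p^m,0)=1$, and conclude via Theorem~\ref{thm:general-residual}. Your added remark about decomposing $S$ into finite-measure pieces (and using that $\tilde A(f,0)$ is closed under countable unions) is a helpful clarification the paper leaves implicit.
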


\begin{proof}
  Depending on the value of $s$, we either use the $s\not\in \N$ case in Theorem \ref{thm:atilde-summary} or Theorem \ref{thm:atilde-summary} \eqref{i:summary-1pu} to deduce that, for any Lipschitz $f\colon X \to \R^m$, there exists a $N\subset S$ with $\Hs(N)=0$ such that $S\setminus N \in \tilde A(f,0)$.
  Recall from Observation \ref{obs:tildek-d0} that $\tilde K(\ell_p^m,0)=1$ for any $1\leq p\leq \infty$ and any $m\in \N$.
  Thus all of the hypotheses of Theorem \ref{thm:general-residual} are satisfied and its conclusion agrees with the conclusion of the theorem.
\end{proof}

We now turn out attention to perturbing distances in a compact metric space using functions with controlled Lipschitz constant.
\begin{theorem}\label{thm:metric-perturbation}
  For $s>0$ let $X$ be a compact metric space with $\Hs(X)<\infty$.
  Suppose that either $s\in\N$, $X$ is purely $s$-unrectifiable and satisfies \eqref{eq:lower-density} or $s\not\in\N$.
  Then for any $\eps>0$ there exists an $m\in\N$ and a $(2\sqrt{s}+1)$-Lipschitz $\p\colon X \to \ell_\infty^m$ such that
  \begin{itemize}
    \item $|d(x,y)-\|\p(x)-\p(y)\|_\infty|<\eps$ for each $x,y\in X$ and
    \item $\Hs(\p(X))<\eps$.
  \end{itemize}
\end{theorem}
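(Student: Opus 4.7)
The plan is to build $\sigma$ as a small perturbation of the near-isometric embedding produced by Lemma \ref{lem:compact-perturbation-l_infty}, where the perturbation is supplied by Theorem \ref{thm:general-perturbation}. The bridge from our hypotheses on $X$ to the applicability of Theorem \ref{thm:general-perturbation} is Theorem \ref{thm:atilde-summary}, and the crucial control of the Lipschitz constant by $\sqrt{s}+2$ comes from Observation \ref{obs:tildek-d0}.

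Fix $\eps>0$ and let $\eps',\eps''>0$ be auxiliary parameters to be chosen at the end. First, I apply Lemma \ref{lem:compact-perturbation-l_infty} to obtain an integer $m$ and a $1$-Lipschitz map $F\colon X\to \ell_\infty^m$ with
\[ d(x,y)-\eps' \leq \|F(x)-F(y)\|_\infty \leq d(x,y) \quad \text{for all } x,y\in X.\]
Next, I set $d$ to be the greatest integer strictly less than $s$. In both cases of the hypothesis ($s\notin\N$ directly, or $s\in\N$ with $X$ purely $s$-unrectifiable and \eqref{eq:lower-density} holding), Theorem \ref{thm:atilde-summary} yields a Borel set $N\subset X$ with $\Hs(N)=0$ and $S:=X\setminus N \in \tilde A(F,d)$.

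Now I apply Theorem \ref{thm:general-perturbation} to $F$ (with $\Lip F=1$) and $S$ (with $\Hs(S)\leq \Hs(X)<\infty$) with tolerance $\eps''$. This produces a $(\tilde K(\ell_\infty^m,d)+\eps'')$-Lipschitz map $\sigma\colon X\to \ell_\infty^m$ with $\|\sigma(x)-F(x)\|_\infty<\eps''$ for every $x\in X$ and $\Hs(\sigma(S))<\eps''$. By Observation \ref{obs:tildek-d0}, applied with the standard basis of $\ell_\infty^m$ (for which $K_u=1$), the inequality $\tilde K(\ell_\infty^m,d)\leq \sqrt{d}+2$ holds. Because $d<s$ strictly—including the integer case $s=n$, where $d=n-1$—the margin $\sqrt{s}-\sqrt{d}$ is positive, so I may choose $\eps''$ small enough that $\sqrt{d}+2+\eps''\leq \sqrt{s}+2$, and in addition arrange $\eps'+2\eps''\leq \eps$ and $\eps''\leq \eps$. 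Then $\Lip \sigma \leq \sqrt{s}+2$, and the triangle inequality gives
\[ \bigl|d(x,y)-\|\sigma(x)-\sigma(y)\|_\infty\bigr| \leq \bigl|d(x,y)-\|F(x)-F(y)\|_\infty\bigr| + 2\eps'' \leq \eps' + 2\eps'' \leq \eps.\]
Finally, since $\sigma$ is Lipschitz and $\Hs(N)=0$, we get $\Hs(\sigma(N))=0$, so $\Hs(\sigma(X))=\Hs(\sigma(S))+\Hs(\sigma(N))<\eps''\leq \eps$.

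The main obstacle is the tight bookkeeping on the Lipschitz constant: Theorem \ref{thm:general-perturbation} inflates $\Lip F$ by a factor $\tilde K(\ell_\infty^m,d)$, and all of Section \ref{sec:vect-valu-pert} was designed precisely so that, for the $\ell_\infty^m$ target with the standard basis, this factor is $\sqrt{d}+2$. Absorbing the overshoot $\eps''$ into the clean constant $\sqrt{s}+2$ is possible only because $d<s$ strictly, which is built into the convention that $d$ is the greatest integer strictly less than $s$.
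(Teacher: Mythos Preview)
Your proof is correct and follows essentially the same route as the paper's: apply Lemma \ref{lem:compact-perturbation-l_infty} to get the near-isometric $F$, invoke Theorem \ref{thm:atilde-summary} to place a full-measure part of $X$ in $\tilde A(F,d)$, then perturb via Theorem \ref{thm:general-perturbation} and read off the Lipschitz bound from Observation \ref{obs:tildek-d0}. Your bookkeeping is in fact slightly more explicit than the paper's (which absorbs the $+\eps$ overshoot into the final ``since $\eps>0$ is arbitrary''); the only cosmetic slip is writing $\Hs(\sigma(X))=\Hs(\sigma(S))+\Hs(\sigma(N))$ where subadditivity gives $\leq$, but since $\Hs(\sigma(N))=0$ the conclusion is unaffected.
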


\begin{proof}
  Fix $\eps>0$.
  Since $X$ is compact, we apply Lemma \ref{lem:compact-perturbation-l_infty} to obtain an $m\in\N$ and a 1-Lipschitz function $F\colon X \to \ell_\infty^m$ such that
\begin{equation}\label{eq:4}|d(x,y)-\|F(x)-F(y)\||<\eps\end{equation}
  for each $x,y\in X$.
  By Theorem \ref{thm:atilde-summary}, there exists a $N\subset S$ with $\Hs(N)=0$ such that $S\setminus N \in \tilde A(F,d)$, for $d$ the greatest integer strictly less than $s$.
  Applying Theorem \ref{thm:general-perturbation} to $F$ gives a $\p\colon X \to \ell_\infty^m$ such that
\begin{equation}\label{eq:5}|F(z)-\p(z)|<\eps\end{equation}
  for each $z\in X$ and $\Hs(\p(S))<\eps$.
  Note that, by Observation \ref{obs:tildek-d0}, $\p$ is $2\sqrt{d}+1$ Lipschitz.

  Using \eqref{eq:4}, \eqref{eq:5} and the triangle inequality gives
  \begin{align*}|d(x,y)-\|\p(x)-\p(y)\|| &\leq |d(x,y)-\|F(x)-F(y)\||\\
  &\qquad + |\|F(x)-F(y)\|-\|\p(x)-\p(y)\||\\
                                         &\leq \eps + \|F(x)-F(y)-(\p(x)-\p(y))\|\\
                                         &\leq 3\eps
  \end{align*}
  for each $x,y\in X$.
  Since $\eps>0$ is arbitrary, this completes the proof.
  \end{proof}
\begin{remark}\label{rmk:other-theorems-are-better}
  Note that, if $X$ is a subset of some Euclidean space, is purely 1-unrectifiable or $0<s < 1$, then a much stronger conclusion is obtained from Theorem \ref{thm:residual-euclidean-domain} or Theorem \ref{thm:residual-domain-1pu}.
  One simply needs to choose a Lipschitz function arbitrarily close to the identity in the first case, or a Lipschitz function arbitrarily close to the function obtained from Lemma \ref{lem:compact-perturbation-l_infty} for the latter two.
  In either case, this perturbation can be chosen to be 1-Lipschitz.
\end{remark}

\begin{remark}
  \label{rmk:lower-density-unnec2}
  If the reader accepts the first statement in Remark \ref{rmk:lower-density-unnec3}, then the lower density assumption \eqref{eq:lower-density} is not necessary in any of the previous theorems.
\end{remark}

\subsection{Perturbing sets in unconditional Banach spaces}\label{sec:pert-sets-uncond}

The concepts discussed in Section \ref{sec:prop-finite-dimens} may be generalised to infinite dimensional Banach spaces, as can be found in any introductory book on the geometry of Banach spaces, for example \cite{Albiac_2016}.
A \emph{Schauder basis} of a Banach space $X$ is a sequence $b_{j}\in X$ such that any $x\in X$ has a unique representation $x = \sum_j \lambda_{j}b_{j}$.
A well known application of the Banach-Steinhaus theorem is that the \emph{basis projections}
\[P_{n}:  \sum_{j\in \N} \lambda_{j}b_{j} \mapsto  \sum_{j = 1}^{n} \lambda_{j}b_{j}\]
are uniformly bounded (\cite[Proposition 1.1.4]{Albiac_2016}).
This leads to the \emph{bounded approximation property} for Banach spaces with a Schauder basis:
for any compact $S\subset X$ and any $\eps>0$ there exists an $m\in\N$ such that $\|P_m(x)-x\|<\eps$ for each $x\in S$.
Thus, any compact subset of $X$ may be $\eps$-perturbed into a finite dimensional subspace $V_{n}:= \sspan\{b_{1},\ldots b_{n}\}$ using a Lipschitz (in fact linear) function whose Lipschitz constant is independent of $\eps$.

We will apply Theorem \ref{thm:general-perturbation} to the $P_{m}$.
For this to be useful, we must consider the values of $\tilde K(V_{m},d)$.
A Schauder basis is \emph{unconditional} if for every $x\in X$ the sum
$\sum_jb_{j}^{*}(x)b_{j}$
converges unconditionally (i.e. independently of the order of summation).
It follows (\cite[Proposition 3.1.3]{Albiac_2016}) that there exists a constant $K_{u}$ such that, for any bounded sequence $l=(l_{i})$ and $x\in X$,
\[
  \left\|\sum_{i\in\N}l_{i}b_{i}^{*}(x)b_{i}\right\| \leq K_{u} \|l\|_{\infty}\|x\|.
\]
Therefore, for any $m\in\N$, $V_m$ satisfies \eqref{eq:fin-dim-unconditional} for this value of $K_u$.
Consequently, $\tilde K(V_m,d)$ is uniformly bounded in $m$ for each $d\geq 0$.
We denote this bound by $\tilde K(X,d)$.

Therefore, we can prove the following.
\begin{theorem}\label{thm:banach-space-perturbation}
  Let $X$ be a Banach space with an unconditional basis and, for $s>0$, let $S\subset X$ be compact with $\Hs(S)<\infty$.
  Suppose that either $s\in\N$, $S$ is purely $s$-unrectifiable and satisfies \eqref{eq:lower-density} or $s\not\in\N$.
  Then for any $\eps>0$ there exists a Lipschitz $\p\colon X \to X$ such that
  \begin{itemize}
    \item $\|\p(x)-x\|<\eps$ for each $x\in S$ and
    \item $\Hs(\p(S))<\eps$.
  \end{itemize}
  The Lipschitz constant of $\p$ depends only upon $X$ and $s$. 
\end{theorem}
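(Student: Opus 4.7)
The plan is to combine the bounded approximation property of the unconditional Schauder basis with Theorem~\ref{thm:general-perturbation} applied to a basis projection. Roughly, a high-enough basis projection $P_{m}$ already moves $S$ by less than half of $\eps$ into the finite-dimensional subspace $V_{m}$; then Theorem~\ref{thm:general-perturbation} supplies an arbitrarily small further perturbation inside $V_{m}$ that crushes the $\Hs$-measure of the image, and unconditionality guarantees that the whole construction has Lipschitz constant controlled independently of $m$ and $\eps$.

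First, since $S$ is compact and the basis projections $P_{m}\colon X \to V_{m}$ converge pointwise to the identity with operator norms uniformly bounded by the basis constant $K_{P}$ of $X$, they converge uniformly on $S$. Thus, for the given $\eps>0$ one may choose $m\in\N$ with $\|P_{m}(x)-x\|<\eps/2$ for every $x\in S$, and $P_{m}\colon X\to V_{m}$ is a $K_{P}$-Lipschitz map into the finite-dimensional Banach space $V_{m}$, which we identify with $(\R^{m},\|.\|_{V_{m}})$.

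Second, by Theorem~\ref{thm:atilde-summary} (either the case $s\not\in\N$, or the case~\ref{i:summary-star} when $s\in\N$; both are available under the present hypotheses) there is a Borel $N\subset S$ with $\Hs(N)=0$ such that $S\setminus N\in \tilde A(P_{m},d)$, where $d$ is the greatest integer strictly less than $s$. I would then apply Theorem~\ref{thm:general-perturbation} with $F=P_{m}$, target $V=V_{m}$, and some parameter $\eps'>0$ to be fixed small at the end, obtaining a Lipschitz $\p\colon X\to V_{m}$ with
\[
\|\p(x)-P_{m}(x)\|<\eps' \text{ for all } x\in X, \qquad \Hs(\p(S\setminus N))<\eps',
\]
and $\Lip \p \leq \tilde K(V_{m},d)\,\Lip P_{m}+\eps' \leq \tilde K(X,s)K_{P}+\eps'$, where the final inequality uses the uniform bound $\tilde K(V_{m},d)\leq \tilde K(X,s)$ discussed immediately before the statement of the theorem. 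Because $\p$ is Lipschitz and $\Hs(N)=0$, we also get $\Hs(\p(S))=\Hs(\p(S\setminus N))<\eps'$.

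Composing with the isometric inclusion $V_{m}\hookrightarrow X$, view $\p$ as a map $X\to X$ with the same Lipschitz constant. The triangle inequality gives $\|\p(x)-x\|\leq \|\p(x)-P_{m}(x)\|+\|P_{m}(x)-x\|<\eps'+\eps/2$ for $x\in S$, so fixing $\eps'<\min\{\eps/2,1\}$ at the outset delivers both $\|\p(x)-x\|<\eps$ on $S$ and $\Hs(\p(S))<\eps$, with $\Lip \p \leq \tilde K(X,s)K_{P}+1$, a quantity depending only on $X$ and $s$. The only genuine obstacle is verifying the uniform bound on $\tilde K(V_{m},d)$, but this is precisely what unconditionality of the basis delivers, through the estimate \eqref{eq:fin-dim-unconditional} with a common constant $K_{u}$ valid on every $V_{m}$, combined with the Kadets--Snobar bound on projections onto $d$-dimensional subspaces used in Definition~\ref{def:tilde-k}.
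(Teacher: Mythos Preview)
Your argument is correct and follows essentially the same route as the paper: choose $m$ so that the basis projection $P_m$ is uniformly close to the identity on $S$, invoke Theorem~\ref{thm:atilde-summary} to place $S\setminus N$ in $\tilde A(P_m,d)$, and then apply Theorem~\ref{thm:general-perturbation} with $F=P_m$ and target $V_m$, using the uniform bound $\tilde K(V_m,d)\leq \tilde K(X,s)$ coming from unconditionality. Your version is in fact slightly more careful than the paper's, which glosses over the null set $N$ and the $\eps/2$ bookkeeping.
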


\begin{proof}
  Let $M>0$ be a uniform bound for the basis projections $P_{m}$ and, for $\eps>0$, let $m\in\N$ such that $\|P_m(x)-x\|<\eps$ for each $x\in S$.
  By applying Theorem \ref{thm:atilde-summary}, there exists a $N\subset S$ with $\Hs(N)=0$ such that $S\setminus N\in \tilde A(P_m,d)$, for $d$ the greatest integer strictly less than $s$.
  By Theorem \ref{thm:general-perturbation}, there exists a $\tilde K(X,d) M$-Lipschitz $\p\colon X \to V_m$ such that $\Hs(\p(X))<\eps$ and $\|\p(x)-P_m(x)\|<\eps$ for each $x\in S$.
  Thus, the triangle inequality concludes the proof.
\end{proof}

In certain situations this can be improved.
\begin{theorem}
	 Let $X=\ell_p$ for some $1\leq p< \infty$, or $X=c_0$, and for $s>0$ let $S\subset X$ be $\Hs$ measurable with $\sigma$-finite $\Hs$ measure.
	 Suppose that either
	 \begin{itemize}
	 	\item $S$ is purely 1-unrectifiable;
	 	\item $X=\ell_2$ and $s\not\in \N$;
	 	\item $X=\ell_2$, $S$ is purely $s$-unrectifiable and has a countable measurable decomposition $S=\cup_i S_i$ where each $S_i$ satisfies \eqref{eq:lower-density} and $\Hs(S_i)<\infty$.
	 \end{itemize}
	 Then for any $\eps>0$ there exists a 1-Lipschitz $\p\colon X \to X$ such that
  \begin{itemize}
    \item $\|\p(x)-x\|<\eps$ for each $x\in S$ and
    \item $\Hs(\p(S))=0$.
  \end{itemize}
\end{theorem}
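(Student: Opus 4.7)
The plan is to combine the strategy of Theorem \ref{thm:banach-space-perturbation} with the Baire-category argument behind Theorem \ref{thm:general-residual}. Two improvements over Theorem \ref{thm:banach-space-perturbation} are needed: the Lipschitz constant must be exactly $1$ (rather than $\tilde K(X,s)M$), and the image must be $\Hs$-null (rather than only $<\eps$). The first is free under our hypotheses because $\tilde K(V_m,d)=1$ for the relevant finite-dimensional subspace $V_m:=\sspan\{b_1,\ldots,b_m\}\subset X$: either $d=0$ and $V_m=\ell_p^m$ or $\ell_\infty^m$ (Observation \ref{obs:tildek-d0}), or $X=\ell_2$ with $V_m=\ell_2^m$ for any $d\in\N$ (Observation \ref{obs:tildek-euclidean}). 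In each of the three sub-cases Theorem \ref{thm:atilde-summary} supplies a weak tangent field of the required dimension $d$, and so the hypotheses of Theorem \ref{thm:general-residual} are met with target $V_m$ and $L=1$.

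First I would treat a compact bounded $S\subset X$. Fix $\eps>0$ and, using the bounded approximation property applied to the compact set $S$, choose $m$ so large that $\|(I-P_m)(x)\|<\eps/2$ for every $x\in S$. With $R>\sup_{x\in S}\|x\|$, let $T_R$ denote the coordinate-wise truncation of $V_m$ into $[-R,R]^m$; this is $1$-Lipschitz (in either the $\ell_p$ or $\ell_\infty$ norm) with bounded image, and is the identity on $P_m(S)$. Thus $f_0:=T_R\circ P_m$ lies in $\Lip(X,V_m,1)$ and coincides with $P_m$ on $S$. Theorem \ref{thm:general-residual} then makes $\mathcal R:=\{f\in\Lip(X,V_m,1):\Hs(f(S))=0\}$ residual, hence dense, so I may choose $f\in\mathcal R$ within $\eps/2$ of $f_0$ in the uniform metric. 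Viewing $f$ as a map into $V_m\subset X$, the resulting $\sigma:=f\colon X\to X$ is $1$-Lipschitz, satisfies $\Hs(\sigma(S))=0$, and for every $x\in S$
\[
\|\sigma(x)-x\|\le\|f(x)-f_0(x)\|+\|P_m(x)-x\|<\eps.
\]

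For the general $\sigma$-finite case I would decompose $S=N\cup\bigcup_i K_i$ via inner regularity ($N$ is $\Hs$-null, the $K_i$ are compact, and after a further partition I may assume they are pairwise separated), pick pairwise disjoint open convex neighbourhoods $U_i\supset K_i$, and build a $1$-Lipschitz $\sigma_i\colon X\to X$ equal to the identity outside $U_i$, within $\eps$ of the identity on $K_i$, and with $\Hs(\sigma_i(K_i))=0$; the patched $\sigma$ that equals $\sigma_i$ on $U_i$ and $\id$ elsewhere is then $1$-Lipschitz by the convexity of the $U_i$ (a segment from $x\in U_i$ to a $y$ outside $U_i$ meets $\partial U_i$ at a point $z$ with $\sigma_i(z)=z$, giving $\|\sigma_i(x)-y\|\le\|x-z\|+\|z-y\|=\|x-y\|$). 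The main obstacle is the local step: the compact-case $\sigma=f$ built above takes values in the finite-dimensional $V_m$ and so cannot equal the identity outside a bounded set. My remedy is to exploit the orthogonal decomposition $X=V_m\oplus W_m$ afforded by $\ell_p$ (where $\|v+w\|^p=\|v\|^p+\|w\|^p$) or $c_0$ (where $\|v+w\|_\infty=\max(\|v\|_\infty,\|w\|_\infty)$), and run the Baire argument inside the closed subclass of $1$-Lipschitz maps of the form $\sigma_i(x)=g(P_m(x))+(I-P_m)(x)$ with $g\colon V_m\to V_m$ a bounded $1$-Lipschitz map coinciding with $P_m$ off some ball; the orthogonality forces any such $\sigma_i$ to be $1$-Lipschitz on $X$ and to equal the identity wherever $g=P_m$. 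Density of the $\Hs$-null subclass in this closed subclass then follows by applying Theorem \ref{thm:general-perturbation} to $P_m$ on the relevant bounded set and invoking the scaling of Lemma \ref{lem:remove-small-increase-in-lipschitz}.
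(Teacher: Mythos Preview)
Your argument for compact $S$ is exactly the paper's: it invokes one of the residual theorems (Theorem \ref{thm:residual-euclidean-target}, \ref{thm:fractional-dimension}, or \ref{thm:residual-domain-1pu}, according to which hypothesis holds) to find $\p\in\Lip(X,V_m,1)$ arbitrarily close to $P_m$ with $\Hs(\p(S))=0$, using that $\Lip P_m=1$ and $\tilde K(V_m,d)=1$. Your truncation $T_R$ is the natural way to place $P_m|_S$ inside the space of bounded $1$-Lipschitz maps so that the density statement applies; the paper leaves this implicit.

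The paper's proof does not go beyond this, so your patching scheme for the general $\sigma$-finite case is an addition rather than a comparison point. As written, though, it has a gap. Your ``remedy'' for the local step produces $\sigma_i(x)=g(P_{m_i}x)+(I-P_{m_i})x$, which equals the identity precisely on the unbounded cylinder $P_{m_i}^{-1}(V_{m_i}\setminus B)$, not on the complement of a bounded convex $U_i$. The convexity argument you give for the Lipschitz constant of the patched map relies on $\sigma_i=\id$ on $\partial U_i$, which you have not arranged; and since different compact pieces $K_i$ will in general force different values of $m_i$, the corresponding cylinders lie in different coordinate directions and overlap in an uncontrolled way. (There is also a minor inconsistency: a $g\colon V_m\to V_m$ that is bounded and coincides with $\id_{V_m}$ off a ball cannot exist; presumably you mean $g-\id_{V_m}$ is bounded, but then the Baire space must be set up afresh rather than quoted from Theorem \ref{thm:general-residual}.)

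If you want to rescue the $\sigma$-finite case along these lines, one option is to localise in $V_m$ rather than in $X$: run the residual argument in the complete space of $1$-Lipschitz $g\colon V_m\to V_m$ with $g-\id$ bounded and $g=\id$ off a fixed large ball containing $P_m(K_i)$, then set $\sigma_i(x)=g(P_m x)+(I-P_m)x$ and patch over disjoint cylinders determined by the $P_m$-images; but making these cylinders disjoint as $i$ varies (with varying $m_i$) still requires a genuinely new argument.
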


  \begin{proof}
  In this case, $V_m=\ell_p^m$ or $V_m=\ell_\infty^m$ for each $m\in\N$ and $P_m$ is the projection to the first $m$ standard basis vectors, so that $\Lip P_m=1$.
  If $X=\ell_2$, we use Theorem \ref{thm:residual-euclidean-target} or Theorem \ref{thm:fractional-dimension} to find a $\p\in \Lip(X,V_m,1)$ arbitrarily close to $P_m$ with $\Hs(\p(X))=0$.
  If $S$ is purely 1-unrectifiable then we use Theorem \ref{thm:residual-domain-1pu} instead.
  \end{proof}

\section{Typical Lipschitz images of rectifiable sets}\label{sec:converse}

We now show that a typical image of an $n$-rectifiable metric space (of positive measure) has positive $\mathcal{H}^{n}$ measure:
\begin{theorem}
  \label{thm:positive-measure-is-open-dense}
  Let $S\subset X$ be $n$-rectifiable with $\Hn(S)>0$.
  For any finite dimensional Banach space $V$ with $\dim V \geq n$ and $L >0$, the set
  \[\{f\in\Lip(X,V,L) : \Hn(f(S))>0\},\]
  is open and dense.
\end{theorem}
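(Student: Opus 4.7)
I prove openness and density separately, both reducing to Lipschitz maps on Euclidean sets via Kirchheim's decomposition (Lemma \ref{lem:kirchheim}), which produces a biLipschitz $\phi\colon A\to X$ with $A\subset\R^n$ compact, $\phi(A)\subset S$, and $\Hn(\phi(A))>0$. Every question about $\Hn(f(S))$ is then recast as a question about the Lipschitz map $f\circ\phi\colon A\to V$.

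\textbf{Density.} Fix $g\in\Lip(X,V,L)$ and $\eps>0$; assume $\Hn(g(S))=0$ (otherwise take $f=g$). Since $\dim V\geq n$, pick a linear injection $\iota\colon\R^n\to V$ with image $W$ and a projection $\pi\colon V\to W$ of bounded norm (via Kadets--Snobar), chosen so that $\iota^{-1}\pi\iota=\id_{\R^n}$. Extend $\phi^{-1}$ to a Lipschitz $\Phi\colon X\to\R^n$ with $\|\Phi\|_\infty\leq\diam A$ (apply Kirszbraun coordinatewise, then post-compose with the closest-point projection onto the convex hull of $A$). For small parameters $\delta,\alpha>0$ set
\[f_{\delta,\alpha} := (1-\delta)g + \alpha\,\iota\circ\Phi,\]
which lies in $\Lip(X,V,L)$ whenever $\alpha\leq\delta L/(\|\iota\|\Lip\Phi)$ and is within $\eps$ of $g$ in sup norm whenever $\delta\|g\|_\infty+\alpha\|\iota\|\diam A<\eps$. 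Restricting to $\phi(A)$ gives $f_{\delta,\alpha}\circ\phi(a)=(1-\delta)g(\phi(a))+\alpha\iota(a)$, hence by the chain rule and Rademacher
\[\iota^{-1}\pi\circ d(f_{\delta,\alpha}\circ\phi)(a)=(1-\delta)M(a)+\alpha I\quad\text{for a.e. }a\in A,\]
where $M(a):=\iota^{-1}\pi\circ d(g\circ\phi)(a)$ is a measurable $n\times n$ matrix field. For each fixed $a$, $\alpha\mapsto\det((1-\delta)M(a)+\alpha I)$ is a degree-$n$ polynomial in $\alpha$ with leading coefficient $1$ and at most $n$ real roots; by Fubini the set of bad $\alpha\in(0,\infty)$ -- those for which the determinant vanishes on a positive measure subset of $A$ -- has $\mathcal L^1$-measure zero. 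Picking $\delta>0$ and then a non-bad $\alpha$ within the admissible interval, one obtains $f_{\delta,\alpha}\in\Lip(X,V,L)$ within $\eps$ of $g$ whose composition with $\phi$ has full-rank differential a.e.\ on $A$, so by the area formula $\Hn(f_{\delta,\alpha}(\phi(A)))>0$.

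\textbf{Openness.} Suppose $\Hn(f_0(S))>0$. Choosing $\phi$ so that $\Hn(f_0(\phi(A)))>0$, the area formula applied to $f_0\circ\phi$ yields a positive measure set of $a\in A$ at which $f_0\circ\phi$ is metrically differentiable with rank-$n$ differential; pick a Lebesgue density point $a_0$ of this set. Let $L_0=d(f_0\circ\phi)(a_0)$, $W=L_0(\R^n)$, $\pi\colon V\to W$ a bounded-norm projection, and $\sigma>0$ the smallest singular value of the isomorphism $\pi L_0$. For $r>0$ sufficiently small, $\pi\circ f_0\circ\phi$ restricted to $A\cap B(a_0,r)$ is an arbitrarily close perturbation of the non-degenerate linear map $a\mapsto\pi L_0(a-a_0)$, and $A\cap B(a_0,r)$ occupies a controlled proportion of $B(a_0,r)$. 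After rescaling and changing coordinates by $\pi L_0$, the paper's quantitative degree-theoretic lemma (Lemma \ref{lem:converse-dense-subset-of-ball}) provides a threshold $\rho>0$, depending only on $L$, $\|\pi\|$, $\sigma$ and $r$, such that every $f\in\Lip(X,V,L)$ with $\|f-f_0\|_\infty<\rho$ satisfies $\Hn(\pi\circ f(\phi(A\cap B(a_0,r))))>0$, hence $\Hn(f(S))>0$.

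\textbf{Main obstacle.} The main obstacle is the density argument, specifically coordinating $\delta$ (the shrinkage of $g$) with $\alpha$ (the magnitude of the added rank-$n$ perturbation): the Lipschitz-budget inequality forces $\alpha=O(\delta)$, the sup-norm condition forces $\delta=O(\eps/(1+\|g\|_\infty))$ and $\alpha=O(\eps/\diam A)$, and $\alpha$ must additionally avoid a Fubini-null set. These constraints are compatible because for each fixed $\delta>0$ the admissible range of $\alpha$ is an interval of positive length, within which almost every value is good and hence produces a full-rank differential a.e.\ on $A$.
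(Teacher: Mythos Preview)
Your proof is correct, but the density argument takes a genuinely different route from the paper's. The paper (Lemma~\ref{lem:basic-perturb-positive-image} and Proposition~\ref{prop:expanding-measure-is-dense}) isolates a single density point $x_0$ at which the derivative of $g\circ\phi$ exists with locally uniform control, then adds a small \emph{constant} linear correction $T=S-D(g\circ\phi)(x_0)$ chosen so that the perturbed map becomes biLipschitz on a small ball around $x_0$; the positive-measure image is obtained from this single biLipschitz piece. Your argument is more global: you add the linear field $\alpha\,\iota\circ\Phi$, compute $\iota^{-1}\pi\circ d(f_{\delta,\alpha}\circ\phi)=(1-\delta)M+\alpha I$, and use the characteristic-polynomial/Fubini trick to pick an $\alpha$ making the determinant nonzero a.e., after which the area formula finishes. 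Your route avoids the need for a Lusin-type reduction to a set where $Df$ is continuous and is arguably cleaner; the paper's route is more hands-on but yields an explicit biLipschitz neighbourhood. (One small terminological slip: extending $\phi^{-1}\colon\phi(A)\to\R^n$ to all of a metric space $X$ uses McShane coordinatewise rather than Kirszbraun, which needs a Hilbert domain; the extension still exists with the required properties.)

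For openness, both arguments feed the same degree-theoretic core (Lemma~\ref{lem:degree}/Lemma~\ref{lem:converse-dense-subset-of-ball}), but the reductions differ. The paper applies Kirchheim's lemma a second time, to $f_0\circ h_i$, obtaining a set on which $f_0$ itself is biLipschitz; any nearby $g$ then induces $g\circ f_0^{-1}$, a Lipschitz $\eps$-perturbation of the identity on an $n$-rectifiable set, and Theorem~\ref{thm:metric-converse} applies. You instead pick a density point $a_0$ where $d(f_0\circ\phi)$ has rank $n$, project via $\pi$ onto $W=L_0(\R^n)$, and rescale so that the resulting map on $E=(A\cap B(a_0,r)-a_0)/r$ is uniformly close to the identity; since $\|h(x)-x\|<\eps'$ gives $\|h(x)-h(y)\|\geq\|x-y\|-2\eps'$, the hypothesis of Lemma~\ref{lem:converse-dense-subset-of-ball} with $K=1$ is met. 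The paper's two applications of Kirchheim are more streamlined; your route is more explicit about the scale $r$ and the threshold $\rho$, at the cost of tracking the projection $\pi$ and the singular value $\sigma$.
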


The most fundamental results regarding rectifiable metric spaces are due to Kirchheim.
Specifically, we will make use of \cite[Lemma 4]{kirchheim-regularity}, which we paraphrase as follows.
\begin{lemma}
  \label{lem:kirchheim}
  Let $E\subset \R^{n}$ be a Borel set and $h\colon E \to X$ Lipschitz.
  There exists a countable number of Borel sets $E_i\subset E$ such that:
  \begin{itemize}
    \item $\Hn(h(E)\setminus \cup_i h(E_i))=0$;
    \item $h$ is biLipschitz on each $E_i$.
  \end{itemize}
  In particular, for any $n$-rectifiable $S\subset X$, there exists a countable number of \emph{biLipschitz} $h_i\colon A_i \to S$ with $\Hn(S\setminus \cup_i h_i(A_i))=0$.
\end{lemma}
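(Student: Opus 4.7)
The plan is to use Kirchheim's metric differentiability theorem as the main engine. That theorem asserts that for any Lipschitz $h\colon E \subset \R^n \to X$, for $\mathcal L^n$-a.e.\ $x\in E$ there is a seminorm $\mathrm{md}_x h$ on $\R^n$ (the \emph{metric differential}) with
\begin{equation*}
d(h(y),h(x)) = \mathrm{md}_x h(y-x) + o(\|y-x\|) \quad \text{as } E\ni y \to x.
\end{equation*}
I would partition $E$, up to a Lebesgue-null set, into the \emph{degenerate locus} $E^0 := \{x : \mathrm{md}_x h \text{ is not a norm}\}$ and its complement $E^+$, and handle the two separately.

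For $E^0$, the metric differential has a non-trivial kernel at each point, so an area-formula-style covering argument---cover a neighbourhood of $x \in E^0$ in $E$ by thin slabs oriented along $\ker \mathrm{md}_x h$ whose image under $h$ has diameter $o(\text{slab width})$ in the kernel directions---yields $\Hn(h(E^0))=0$. This is precisely what Kirchheim proves, and it means $E^0$ contributes only to an $\Hn$-null subset of $h(E)$ and can be absorbed into the exceptional set.

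On $E^+$, I would quantify the nondegeneracy by setting, for each $k \in \N$,
\begin{equation*}
E^+_k := \{x\in E^+ : \tfrac{1}{k}\|v\| \leq \mathrm{md}_x h(v) \leq k\|v\| \text{ for all } v\in \R^n\},
\end{equation*}
so that $E^+ = \bigcup_k E^+_k$ and each $E^+_k$ is Borel. On $E^+_k \cap B(0,R)$ the metric-differentiability expansion yields a pointwise biLipschitz estimate at a scale depending on $x$; after intersecting with balls of finite measure and applying Egorov's theorem to the moduli of the error terms, this upgrades to a single scale on each of a countable family of closed subsets $F_{k,j} \subset E^+_k$ exhausting $E^+_k$ up to a null set. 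Partitioning each $F_{k,j}$ into countably many Borel pieces $E_{k,j,l}$ of sufficiently small diameter then forces
\begin{equation*}
\tfrac{1}{2k}\|x-y\| \leq d(h(x),h(y)) \leq 2k\|x-y\| \qquad \text{for all } x,y \in E_{k,j,l},
\end{equation*}
so $h$ is biLipschitz on each $E_{k,j,l}$. The countable family $\{E_{k,j,l}\}$, together with the previous analysis of $E^0$ and discarded measure-zero exceptional sets, covers $h(E)$ up to an $\Hn$-null subset, proving the first assertion.

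For the ``in particular'' statement, I would invoke Definition \ref{def:rectifiable} to write $S$ as a countable union of Lipschitz images $f_i(A_i)$ modulo an $\Hn$-null set, apply the first assertion to each $(A_i, f_i)$ to obtain biLipschitz subpieces, and take the resulting countable collection as the required biLipschitz parametrisation. The main obstacle is the treatment of the degenerate locus $E^0$: this is where Kirchheim's quantitative area-type estimate is genuinely required and cannot be replaced by a soft argument, since one must rule out the possibility that $h$ could ``fold'' a positive Lebesgue measure set of points with degenerate metric differential onto a set of positive $\Hn$-measure in $X$. Everything else reduces to standard Lusin/Egorov machinery once metric differentiability is in hand.
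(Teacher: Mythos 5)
Your proposal is correct, and it is essentially a reconstruction of Kirchheim's original argument: the paper does not prove this lemma but cites it directly as \cite[Lemma 4]{kirchheim-regularity}, whose proof proceeds exactly as you describe — metric differentiability almost everywhere, the area-type estimate showing the degenerate locus has $\Hn$-null image, and a Lusin/Egorov uniformisation on the nondegenerate locus followed by a small-diameter decomposition to obtain biLipschitz pieces. You correctly identify the degenerate locus as the one step requiring Kirchheim's quantitative input rather than soft measure theory.
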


\subsection{The set is open}\label{sec:s-is-open}
Our preliminary results will concern arbitrary metric space targets.
This will allow us to also prove the converse to Theorem \ref{thm:metric-perturbation}.

By the result of Kirchheim above and the Vitali covering theorem, any $n$-rectifiable metric space is, up to a set of measure zero, given by a countable disjoint union of biLipschitz images of subsets of balls in $\R^n$.
Each of these subsets may be chosen to have arbitrarily large Lebesgue density in each of their respective ball.
In this subsection, we will prove results about perturbations of such high density subsets of balls, and use them to deduce that the set of Theorem \ref{thm:positive-measure-is-open-dense} is open.

We begin with a topological observation.
For this subsection we fix $n\in \N$ and let $\B$ be the unit ball of $\R^{n}$.
\begin{lemma}
  \label{lem:degree}
  Let $f\colon \B \to \B$ continuous.
  For some $0<\eps<1/2$ suppose that $\|f(x)-x\|<\eps$ for each $x\in\partial \B$.
  Then $f(\B)\supset B(0,1-\eps)$.
\end{lemma}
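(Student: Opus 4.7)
The plan is to apply Brouwer degree theory to the perturbation, using the boundary hypothesis to control the degree along a straight-line homotopy to the identity.

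Fix $y\in B(0,1-\eps)$; the goal is to produce $x\in \B$ with $f(x)=y$. Consider the homotopy
\[H\colon \B\times[0,1]\to \R^n,\qquad H(x,t)=(1-t)x+tf(x),\]
which joins $\id_\B$ (at $t=0$) to $f$ (at $t=1$). For any $x\in \partial\B$ and $t\in[0,1]$, the hypothesis gives
\[\|H(x,t)-x\|=t\|f(x)-x\|<\eps,\]
so $\|H(x,t)\|\geq \|x\|-\eps=1-\eps>\|y\|$. In particular, $H(x,t)\neq y$ for every $x\in\partial\B$ and every $t\in[0,1]$, so the Brouwer degree $\deg(H(\cdot,t),\B,y)$ is well defined throughout the homotopy and is independent of $t$. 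At $t=0$ we have $\deg(\id,\B,y)=1$ since $y\in\B$, hence $\deg(f,\B,y)=1\neq 0$, which forces $y\in f(\B)$.

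Since $y\in B(0,1-\eps)$ was arbitrary, this gives $f(\B)\supset B(0,1-\eps)$. The only nontrivial step is the homotopy invariance of the degree, which is a standard fact; the boundary hypothesis $\|f(x)-x\|<\eps<\|y\|^{-1}$-style estimate is exactly what is needed to ensure the homotopy avoids $y$ on $\partial\B$. Readers preferring to avoid explicit degree theory can instead argue by contradiction: if some $y\in B(0,1-\eps)$ were missed, the map $x\mapsto y+(1-\eps)(f(x)-y)/\|f(x)-y\|$ would be a continuous self-map of $\B$ whose composition with the radial projection would produce a retraction of $\B$ onto $\partial\B$, contradicting the no-retraction theorem.
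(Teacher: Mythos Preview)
Your main argument via the straight-line homotopy and homotopy invariance of the Brouwer degree is correct and efficient. It differs from the paper's proof, which deliberately avoids degree theory and relies only on Brouwer's fixed point theorem: the paper first composes $f$ with a radial stretch $P$ sending the annulus $1-\eps\leq\|x\|\leq 1$ onto $\partial\B$, then, assuming some $y\in B(0,1-\eps)$ is missed, radially projects from $P(y)$ to obtain a continuous $F\colon\B\to\partial\B$ with $\|F(x)-x\|<2\eps<1$ on $\partial\B$, so that $-F$ would be a fixed-point-free self-map of $\B$. Your degree argument is shorter and more transparent; the paper's argument is more elementary in that it invokes nothing beyond the fixed point theorem itself.

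One caveat: your closing alternative sketch is not correct as written. The map $x\mapsto y+(1-\eps)\,(f(x)-y)/\|f(x)-y\|$ takes values on the sphere of radius $1-\eps$ centred at $y$, and since $\|y\|<1-\eps$ this sphere can reach norm up to $\|y\|+(1-\eps)<2(1-\eps)$, which exceeds $1$ when $\eps<1/2$; so it need not be a self-map of $\B$. Also, the phrase ``$\eps<\|y\|^{-1}$-style estimate'' is garbled. These are minor since your primary degree-theoretic proof stands on its own; just drop or repair the final sentence.
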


\begin{proof}
  There are many ways to prove this lemma.
  We give a proof that does not rely on the constructions of algebraic topology, only Brouwer's fixed point theorem.

  First let $P \colon \B \to \B$ be defined by
  \begin{align*}
    P(\lambda v) =
    \begin{cases}
      v & \lambda \in [1-\eps,1]\\
      \frac{\lambda}{1-\eps}v & \lambda \in [0,1-\eps]
    \end{cases}
  \end{align*}
  whenever $v\in \partial \B$ and $\lambda \in [0,1]$.
  Then $P\circ f\colon \B \to \B$ is continuous and maps $\partial \B$ to $\partial \B$.
  Moreover,
  \begin{equation*}
    \|P(f(x))-x\| \leq \|P(f(x))-f(x)\| + \|f(x)-x\| \leq 2\eps
  \end{equation*}
  for each $x\in \partial \B$.

  Suppose that $x\in B(0,1-\eps) \setminus f(\B)$.
  Since $P$ is bijective on $B(0,1-\eps)$, $P(x)\in B(0,1) \setminus P(f(\B))$.
  Let $\rho\colon \B \to \partial \B$ be the radial projection from $P(x)$.
  Then $F=\rho\circ P \circ f \colon \B \to \partial \B$ is continuous with $\|F(x)-x\|<2\eps<1$ for each $x\in \partial \B$.
  In particular, $-F(x)\neq x$ for each $x\in \B$.
  Thus $-F$ is a continuous function from $\B$ to itself without a fixed point, contradicting Brouwer's fixed point theorem.
\end{proof}

We obtain the following consequence for metric space targets.

\begin{lemma}
  \label{lem:converse-defined-on-ball}
  For any $L,K >0$ there exists an $\eps>0$ such that the following is true.
  For any metric space $(Y,\rho)$ and any continuous $f\colon \mathbb B \to Y$ with
  \begin{equation}\label{hyp1}
   \|x-y\|/K -\eps \leq \rho(f(x),f(y))\leq L\|x-y\|
  \end{equation}
  for each $x,y\in \partial \mathbb B$,
  \begin{equation*}
    \mathcal H^{d}(f(\mathbb B))\geq 1/2K\sqrt{n}.
  \end{equation*}
\end{lemma}

\begin{proof}
  We simply construct a Lipschitz function $P_\eps$ that maps $f(\B)$ back to $\R^m$ in such a way that the hypotheses of the previous lemma are satisfied.
  By controlling the Lipschitz constant of $P_\eps$, this gives a lower bound to the measure of $f(\B)$.

  To this end, for $\eps>0$ to be determined later, let $N$ be a maximal $\eps$-net in $\partial \mathbb B$ and let $f$ satisfy \eqref{hyp1} with the choice $\eps = \eps^2/K$.
  Then, for any $s,t\in N$,
  \begin{equation*}
    \rho(f(s),f(t)) \geq \|s-t\|/K -\eps^2/K \geq \|s-t\|(1-\eps)/K
  \end{equation*}
  and so
  \begin{equation*}
    f^{-1}|_{f(N)}
  \end{equation*}
  is $K/(1-\eps)$-Lipschitz.
  Therefore, it may be extended to a $K\sqrt{n}/(1-\eps)$-Lipschitz function $P'\colon f(\mathbb B) \to \R^{n}$.
  Observe that $P_{\eps}:=P'\circ f$ fixes $N$ and is $KL\sqrt{n}/(1-\eps)$-Lipschitz on $\partial \mathbb B$, so that
  \begin{equation*}
    |P_{\eps}(x)-x|< \eps(1+KL\sqrt{n}/(1-\eps)):=\eps^{*}
  \end{equation*}
  for every $x\in \partial \mathbb B$.

  By Lemma \ref{lem:degree},
  \begin{equation*}
    P_{\eps}(\B) \supset B(0,1-\eps^{*})
  \end{equation*}
  whenever $\eps$ is sufficiently small such that $0<\eps^{*}<1/2$.
  Therefore,
  \begin{equation*}
  \Hn(P_{\eps}(\mathbb B)) \geq (1-2\eps^{*})^{n}.
  \end{equation*}
  However, $P_{\eps}(\mathbb B) = P'(f(\mathbb B))$ and so, since $P'$ is $K\sqrt{n}/(1-\eps)$-Lipschitz,
  \begin{equation*}
    \Hn(f(\mathbb B))\geq (1-2\eps^{*})^{n}(1-\eps)/K\sqrt{n} \geq 1/2K\sqrt{n},
  \end{equation*}
  provided we reduce $\eps>0$ further if necessary.
\end{proof}

By a suitable Lipschitz extension, we may remove the topological assumptions on the domain.

\begin{lemma}
  \label{lem:converse-dense-subset-of-ball}
  For any $L,K >0$ there exists an $\eps>0$ such that the following is true.
  For any metric space $(Y,\rho)$, any Borel $E\subset \mathbb B$ with $\mathcal L^{n}(E)\geq (1-\eps)\mathcal L^{n}(\mathbb B)$ and any $L$-Lipschitz $f\colon E \to Y$ with
  \begin{equation*}
    \rho(f(x),f(y)) \geq \|x-y\|/K -\eps
  \end{equation*}
  for each $x,y\in E$,
  \begin{equation*}
    \Hn(f(E))\geq 1/4K\sqrt{n}.
  \end{equation*}
\end{lemma}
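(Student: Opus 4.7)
The plan is to extend $f$ to an $L$-Lipschitz map defined on all of $\B$, apply Lemma \ref{lem:converse-defined-on-ball} to this extension, and then correct for the small ``error'' contributed by $\B\setminus E$.

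First, I would isometrically embed $(Y,\rho)$ into $\ell^\infty(Y)$ via the map $y\mapsto(\rho(y,z)-\rho(y_0,z))_{z\in Y}$, and apply McShane's extension coordinate-wise to produce an $L$-Lipschitz $\tilde f\colon \B\to \ell^\infty(Y)$ with $\tilde f|_E=f$. Since the embedding is isometric and Hausdorff measure depends only on the intrinsic metric, $\Hn(f(E))$ is unchanged by this reduction and it suffices to bound $\Hn(f(E))$ as a subset of $\ell^\infty(Y)$.

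Next, I would establish that $E$ is $\eta(\eps)$-dense in $\B$ with $\eta(\eps)\to 0$ as $\eps\to 0$. This is purely volumetric: for $x\in\B$ and $\eta\le 1$, the intersection $B(x,\eta)\cap\B$ has $\mathcal{L}^n$-measure at least $c_n\eta^n$ for a dimensional constant $c_n$ (a spherical cap estimate), so if $B(x,\eta)\cap E=\emptyset$ then $\eps\mathcal L^n(\B)\ge c_n\eta^n$. Choosing $\eta(\eps):=(\eps\mathcal L^n(\B)/c_n)^{1/n}$, for every $x\in\partial\B$ I can pick $e(x)\in E$ with $\|x-e(x)\|\le \eta(\eps)$. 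Then for $x,y\in\partial\B$, the Lipschitz bound on $\tilde f$ and the hypothesis on $f$ give
\begin{align*}
\rho(\tilde f(x),\tilde f(y)) &\ge \rho(f(e(x)),f(e(y))) - L\bigl(\|x-e(x)\|+\|y-e(y)\|\bigr)\\
&\ge \|e(x)-e(y)\|/K - \eps - 2L\eta(\eps)\\
&\ge \|x-y\|/K - (2/K+2L)\eta(\eps) - \eps,
\end{align*}
which is of the form $\|x-y\|/K - \eps^*$ with $\eps^*\to 0$ as $\eps\to 0$. Shrinking $\eps_2$ so that $\eps^*\le \eps_1(L,K)$ from Lemma \ref{lem:converse-defined-on-ball}, that lemma applied to $\tilde f$ gives $\Hn(\tilde f(\B))\ge 1/(2K\sqrt n)$.

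To finish, since $\tilde f$ is $L$-Lipschitz and $\Hn(\B\setminus E)\le c'_n\eps$ for a constant $c'_n$ depending only on $n$,
\[\Hn(\tilde f(\B\setminus E))\le L^n c'_n\eps.\]
Using $\tilde f(\B)=f(E)\cup\tilde f(\B\setminus E)$ and subadditivity of Hausdorff measure,
\[\Hn(f(E))\ge \Hn(\tilde f(\B)) - \Hn(\tilde f(\B\setminus E))\ge \frac{1}{2K\sqrt n}-L^n c'_n\eps,\]
which is at least $1/(4K\sqrt n)$ once $\eps_2$ is also reduced to ensure $L^n c'_n\eps_2\le 1/(4K\sqrt n)$. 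The main (modest) obstacle is calibrating $\eps_2$ to satisfy both smallness requirements simultaneously, but both are quantitative and depend only on $L$, $K$ and $n$; no further structural input is needed.
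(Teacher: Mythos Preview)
Your proposal is correct and follows essentially the same approach as the paper: embed the target into an $\ell_\infty$ space, extend $f$ coordinatewise to all of $\B$, use a volumetric argument to show $E$ is dense enough that the lower-bound hypothesis of Lemma~\ref{lem:converse-defined-on-ball} holds on $\partial\B$, apply that lemma, and then subtract the $L^n$-controlled measure of $\tilde f(\B\setminus E)$. The only cosmetic difference is that the paper embeds the separable set $f(E)$ into $\ell_\infty$ rather than all of $Y$ into $\ell^\infty(Y)$, but this is immaterial.
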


\begin{proof}
  The lemma follows by simply extending any function defined on a $E\subset \mathbb B$ to the whole of $\mathbb B$ and observing that, if $E$ has sufficiently large measure, the hypotheses of the previous lemma apply.

  To this end, suppose that $\delta,\eps>0$ and $E\subset \mathbb B$ satisfies $\mathcal L^{n}(E)\geq (1-\delta)\mathcal L^{n}(\mathbb B)$, $(Y,\rho)$ is a metric space and $f\colon E\to Y$ is $L$-Lipschitz with
  \begin{equation*}
    \rho(f(x),f(y)) \geq \|x-y\|/K -\eps
  \end{equation*}
  for each $x,y\in E$.

  Since $f(E)\subset Y$ is separable, we may isometrically embed $f(E)$ into $\ell_{\infty}$ and extend it, component by component, to an $L$-Lipschitz function $f\colon \mathbb B\to Y':=f(\mathbb B)\subset \ell_{\infty}$.
  If $0<\delta<(\eps/2)^{n}$, then we have $B(E,\eps)\supset \mathbb B$ and so, given $x,y\in\mathbb B$, there exists $x',y'\in E$ with $\|x-x'\|,\|y-y'\|<\eps$.
  In particular,
  \begin{align*}
    \|f(x)-f(y)\|_{\infty} &\geq \|f(x')-f(y')\|_{\infty}-\|f(x)-f(x')\|_{\infty}-\|f(y)-f(y')\|_{\infty}\\
                           &\geq \|x'-y'\|/K - 2L\eps\\
                           &\geq \|x-y\|/K -2(L+1/K)\eps.
  \end{align*}

  Now suppose that $\eps_{1}>0$ is given by the previous lemma and $\eps>0$ is sufficiently small such that $2(L+1/K)\eps\leq\eps_{1}$.
  Then we may apply the previous lemma to $f$ to see that $\Hn(f(\mathbb B))\geq 1/2K\sqrt{n}$.
  However, since $f$ is $L$-Lipschitz,
  \begin{equation*}
    \Hd(f(\mathbb B\setminus E)) \leq L^{n} \Hn(\mathbb B\setminus E) \leq L^{n}\delta.
  \end{equation*}
  In particular, provided $\delta \leq 1/4KL^{n}\sqrt{n}$, we have $\Hn(f(E))\geq 1/4K\sqrt{n}$, as required.
  Thus, choosing $\eps=\min\{1/4KL^{n}\sqrt{n},\eps_{1}/2(L+1/K)\}$ is sufficient.
\end{proof}

Finally, by scaling, we may apply the previous result to a ball of any radius.
\begin{lemma}
  \label{lem:converse-dense-subset-any-radius}
  For any $L,K >0$ there exists an $\eps>0$ such that the following is true.
  For any metric space $(Y,\rho)$, any $x\in \R^n$, $r>0$ and $E\subset B(x,r)$ with $\mathcal L^n(E) \geq (1-\eps)\mathcal L^n(B(x,r))$ and any $L$-Lipschitz $f\colon E \to Y$ with
  \begin{equation*}
    \rho(f(x),f(y)) \geq \|x-y\|/K -\eps r
  \end{equation*}
  for each $x,y\in E$,
  \begin{equation*}
    \Hn(f(E))\geq r^n/4K\sqrt{n}.
  \end{equation*}
\end{lemma}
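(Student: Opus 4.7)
The plan is to reduce the statement to Lemma \ref{lem:converse-dense-subset-of-ball} by a straightforward rescaling of both the domain and the target. Given $x\in\R^n$, $r>0$ and $E\subset B(x,r)$, I would introduce the rescaled domain
\[\tilde E := \{(y-x)/r : y\in E\}\subset \B,\]
which satisfies $\mathcal L^n(\tilde E)\geq (1-\eps)\mathcal L^n(\B)$ since the density hypothesis is scale invariant. On the target side I would equip $Y$ with the rescaled metric $\tilde\rho := \rho/r$, noting that $(Y,\tilde\rho)$ is still a metric space, and then define $\tilde f\colon \tilde E \to (Y,\tilde\rho)$ by $\tilde f(z) = f(x+rz)$.

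The next step is to verify that the hypotheses of Lemma \ref{lem:converse-dense-subset-of-ball} transport to $\tilde f$ with the same constants $L$ and $K$. For any $z_1,z_2\in\tilde E$ the $L$-Lipschitz bound for $f$ gives
\[\tilde\rho(\tilde f(z_1),\tilde f(z_2)) = \rho(f(x+rz_1),f(x+rz_2))/r \leq L\|z_1-z_2\|,\]
while the quasi-isometric lower bound yields
\[\tilde\rho(\tilde f(z_1),\tilde f(z_2)) \geq (\|rz_1-rz_2\|/K -\eps r)/r = \|z_1-z_2\|/K-\eps.\]
Hence, choosing $\eps_2$ to be the constant produced by Lemma \ref{lem:converse-dense-subset-of-ball} for the pair $(L,K)$, I may apply that lemma to $\tilde f$ for every $0<\eps\leq\eps_2$ to obtain $\mathcal H^n_{\tilde\rho}(\tilde f(\tilde E))\geq 1/4K\sqrt n$.

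Finally I would unwind the rescaling on the target. Since $\tilde f(\tilde E)=f(E)$ as subsets of $Y$ and Hausdorff measure transforms under metric scaling by $\mathcal H^n_{\tilde\rho}(A) = r^{-n}\mathcal H^n_{\rho}(A)$, the bound becomes $\Hn(f(E))\geq r^n/4K\sqrt n$, which is the desired conclusion. There is no genuine obstacle here: the entire argument is a bookkeeping exercise ensuring that the rescalings of domain and target cancel so that the constants $L$, $K$ and $\eps_2$ carry over unchanged from the unit-ball case.
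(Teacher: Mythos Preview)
Your proposal is correct and follows essentially the same scaling argument as the paper: rescale the domain via $z\mapsto x+rz$ and the target metric by $1/r$, verify that the Lipschitz and lower bounds transfer with the same constants, apply Lemma~\ref{lem:converse-dense-subset-of-ball}, and undo the scaling using $\mathcal H^n_{\rho/r}=r^{-n}\mathcal H^n_\rho$. Your write-up is in fact cleaner than the paper's, which contains a minor typo in the definition of the rescaled map.
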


\begin{proof}
  This simply follows from the previous lemma by a scaling argument.
  Consider the scaled metric space $Y_r := (Y,\rho/r)$ and the function
  \begin{align*}
    G &\colon B(0,1) \to Y_r\\
    G(y) &= f((y-x)/r),
  \end{align*}
  Then, because of the choice of metric in $Y_r$, $G$ is $L$-Lipschitz and
  \begin{equation*}
   \frac{\rho(G(x),G(y))}{r} = \frac{\rho(f(x),f(y))}{r} \geq \frac{\|x-y\|}{r K} -\eps.
 \end{equation*}
 Moreover, the scaled copy $(E-x)/r$ of $E$ inside $B(0,1)$ satisfies $\mathcal L^{n}((E-x)/r)\geq (1-\eps)\mathcal L^{n}(B(0,1))$.
 Therefore, we may apply the previous lemma to conclude that $\Hn(G((E-x/r)))\geq 1/4K\sqrt{n}$ with respect to the metric $\rho/r$.
 That is, $\Hn(f(E))\geq r^{n}/4K\sqrt{n}$.
\end{proof}

We are now in the position to prove the converse direction to Theorem \ref{thm:intro-main-theorem-2}.
Following this, we will use it to prove that the set from Theorem \ref{thm:positive-measure-is-open-dense} is open.
\begin{theorem}\label{thm:metric-converse}
  Let $S\subset X$ be $n$-rectifiable with $\Hn(S)>0$.
  Then,
  \begin{equation*}
    \inf_{L\geq 1}\lim_{\eps\to 0} \inf\Hn(\p_\eps(S)) > 0
  \end{equation*}
  where the second infimum is taken over all metric spaces $(Y,\rho)$ and all $L$-Lipschitz $\p_\eps\colon X \to Y$ with $|d(x,y)-\rho(\p_\eps(x),\p_\eps(y))|<\eps$ for each $x,y\in S$.
\end{theorem}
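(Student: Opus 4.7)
Plan: The strategy is to pull back via Kirchheim's regularity lemma to a subset of $\R^n$ and then combine Lemma \ref{lem:converse-dense-subset-any-radius} with a Vitali-type covering argument, crucially exploiting that $\p$ is almost an isometry (rather than merely $L$-Lipschitz) on $S$.

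By Lemma \ref{lem:kirchheim}, since $S$ is $n$-rectifiable with $\Hn(S)>0$ we may fix a biLipschitz $h\colon A\to X$ with $A\subset\R^n$ Borel, $\mathcal L^n(A)>0$ and $h(A)\subset S$; set $\Lambda=\Lip h$ and $K=\Lip(h^{-1})$. For any $L$-Lipschitz $\p\colon X\to Y$ with $|d(x,y)-\rho(\p(x),\p(y))|<\eps$ for all $x,y\in S$, the composition $f:=\p\circ h\colon A\to Y$ satisfies
\[\|x-y\|/K - \eps \;\leq\; \rho(f(x),f(y)) \;\leq\; \Lambda\|x-y\| + \eps \qquad (x,y\in A),\]
the upper bound using that $\p$ is an $\eps$-isometry on $h(A)\subset S$. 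It is this upper bound (which is tighter than the global $L\Lambda\|x-y\|$) that ultimately makes the estimate uniform in $L$.

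Fix $L\geq 1$ and let $\eps_2 = \eps_2(L\Lambda,K)>0$ be as in Lemma \ref{lem:converse-dense-subset-any-radius}. Combining Egoroff with the Lebesgue density theorem produces $A'\subset A$ with $\mathcal L^n(A')\geq\mathcal L^n(A)/2$ and $r_0>0$ such that $\mathcal L^n(A\cap B(a,r))\geq(1-\eps_2/2)\mathcal L^n(B(a,r))$ whenever $a\in A'$ and $r\leq r_0$. Fix such an $r$ and set $D:=3K\Lambda r$. A maximality (Zorn) argument produces a disjoint family $\{B(a_j,D/2)\}_{j=1}^N$ with $a_j\in A'$, and by maximality $A'\subset\bigcup_j B(a_j,D)$, so $N\geq \mathcal L^n(A)/(2c_n D^n)$ with $c_n:=\mathcal L^n(B(0,1))$. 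For $\eps$ small enough (depending on $L$) that both $\eps\leq r\eps_2/2$ and $3\eps\leq\Lambda r$, Lemma \ref{lem:converse-dense-subset-any-radius} applies on each $A\cap B(a_j,r)$ and yields $\Hn(f(A\cap B(a_j,r)))\geq r^n/(4K\sqrt n)$.

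The final step is a Vitali-style summation: by the upper bound on $\rho\circ f$, each image $f(A\cap B(a_j,r))$ lies inside the ball $B(f(a_j),\Lambda r+\eps)$ in $Y$, whereas distinct centres satisfy $\rho(f(a_j),f(a_k))\geq D/K-\eps\geq 2(\Lambda r+\eps)$, so the images are pairwise disjoint. Summing,
\[\Hn(\p(S))\;\geq\;\sum_{j=1}^N\Hn(f(A\cap B(a_j,r)))\;\geq\;\frac{Nr^n}{4K\sqrt n}\;\geq\;\frac{\mathcal L^n(A)}{8\cdot 3^n\, c_n\, K^{n+1}\Lambda^n\sqrt n},\]
a quantity depending only on $n,K,\Lambda,\mathcal L^n(A)$ and in particular independent of both $L$ and $\eps$. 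Taking $\liminf_{\eps\to 0}$ and then $\inf_{L\geq 1}$ finishes the proof. The main obstacle the spacing argument is designed to overcome is that a naive single-ball application of Lemma \ref{lem:converse-dense-subset-any-radius} at a Lebesgue density point produces only a bound $r(L)^n/(4K\sqrt n)$ with $r(L)\to 0$ as $L\to\infty$; it is precisely the near-isometric behaviour of $\p$ on $S$ that caps the radius of each image ball by $\Lambda r+\eps$ uniformly in $L$ and thereby allows summing over the Vitali cover without losing this uniformity.
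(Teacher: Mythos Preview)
Your proof is correct and follows essentially the same architecture as the paper's: reduce via Kirchheim's lemma to a biLipschitz piece $h:A\to S$, cover $A$ by disjoint balls on which the density of $A$ is high, apply Lemma~\ref{lem:converse-dense-subset-any-radius} on each, show the images are disjoint, and sum. The final bound is independent of $L$ in both arguments because the sum $\sum r_j^n$ (respectively $Nr^n$) is controlled by $\mathcal L^n(A)$ alone.

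One point of comparison worth noting: you make a point of using the \emph{near-isometric upper bound} $\rho(f(x),f(y))\leq \Lambda\|x-y\|+\eps$ to confine each image to a ball of radius $\Lambda r+\eps$ and thereby separate the images; you remark that this is what secures $L$-uniformity. The paper does not use this upper bound at all. It takes the balls $B_i$ from the Vitali covering theorem (so they are disjoint closed balls in the domain with a positive separation $\eps_0$), and then disjointness of the images follows directly from the \emph{lower} bound $\rho(f(x),f(y))\geq \|x-y\|/K-\eps$ once $\eps<\eps_0/K$. The $L$-uniformity in the paper's argument comes purely from the fact that $\sum r_i^n = \mathcal L^n(\cup_i B_i)/\mathcal L^n(\B) \geq \mathcal L^n(E)/(2\mathcal L^n(\B))$, which is independent of how small the $r_i$ (and hence $\eps_2(KL,K)$) had to be chosen. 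So your mechanism works, but the diagnosis in your final paragraph---that the near-isometry is what rescues $L$-uniformity---is not quite right; the paper's proof shows that the lower bound alone suffices.
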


\begin{proof}
  By applying Lemma \ref{lem:kirchheim}, there exists a Borel $E\subset \R^n$ of positive measure and $K$-biLipschitz $h\colon E \to S$, for some $K\geq 1$.
  Observe that, if $\eps>0$, $(Y,\rho)$ is a metric space and $\p\colon X \to Y$ is $L$-Lipschitz with $|d(x,y)-\rho(\p(x),\p(y))|<\eps$ for each $x,y\in S$, then
  \begin{equation}
    \label{eq:composition-perturbation}
    \|x-y\|/K -\eps \leq \rho(\p(h(x)),\p(h(y))) \leq KL\|x-y\|
  \end{equation}
  for each $x,y\in E$.

  Fix $L\geq 1$ and let $\eps_{2}>0$ be given by the previous lemma for the choice of $KL$ in place of $L$.
  By applying the Vitali covering theorem, there exists a finite collection of disjoint closed balls $B_{i}\subset \R^{n}$ such that
  \begin{equation}
    \label{eq:balls-cover-E}
    \mathcal L^{n}\left(E \setminus \bigcup_{i=1}^{M}B_{i}\right)<\mathcal L^{n}(E)/2
  \end{equation}
  and
  \begin{equation}\label{eq:E-dense-in-balls}
    \mathcal L^{n}(E\cap B_{i})\geq \max\{(1-\eps_{2}),1/2\}\mathcal L^{n}(B_{i})
  \end{equation}
 for each $i\in\N$.
  Since the $B_{i}$ are a finite number of disjoint closed balls, there exists an $\eps_{0}>0$ such that $B(B_{i},\eps_{0})\cap B(B_{j},\eps_{0})=\emptyset$ whenever $i\neq j$.
  For each $1\leq i \leq M$ let $r_{i}$ be the radius of the $B_{i}$ and $r=\min r_{i}$.
  We now fix $0<\eps<\min\{r\eps_{2},\eps_{0}/K\}$, a metric space $(Y,\rho)$ and a $L$-Lipschitz $\p\colon X \to Y$ with $|d(x,y)-\rho(\p(x),\p(y))|<\eps$ for each $x,y\in X$.

  Note that, since the $B_{i}$ are separated by a distance at least $\eps_{0}>K\eps$, equation \eqref{eq:composition-perturbation} shows that the $\p(h(B_{i}))$ are disjoint.
  Therefore, by applying Lemma \ref{lem:converse-dense-subset-any-radius}
    to each $B_{i}$ for $1\leq i \leq M$, we see that
 \begin{equation*}
   \Hn(\p(h(E\cap \bigcup_{i=1}^{M}B_{i}))) \geq \sum_{i=1}^{M}\frac{r_{i}^{n}}{4K\sqrt{n}} = \frac{\mathcal L^{n}(\cup_{i}B_{i})}{\mathcal L^{n}(\mathbb B) 4K\sqrt{n}}\geq \frac{\mathcal L^{n}(E)}{\mathcal L^{n}(\mathbb B)16K\sqrt{n}}.
 \end{equation*}
 Note that the final inequality uses equations \eqref{eq:balls-cover-E} and \eqref{eq:E-dense-in-balls}.
 Since $\p(X)\supset \p(h(E))$, and the right hand side of this expression involves quantities depending only on $E$, this completes the proof.
\end{proof}

As a consequence, we now prove that the set in Theorem \ref{thm:positive-measure-is-open-dense} is open.
In fact, we prove the following stronger result.
\begin{proposition}
  Let $S\subset X$ be $n$-rectifiable.
  For any $L> 0$, any metric space $Y$ and any $L$-Lipschitz $f\colon X \to Y$ with $\Hn(f(S))>0$, there exists an $\eps>0$ such that $\Hn(g(S))>0$ for any $L$-Lipschitz $g \colon S \to Y$ with $\rho(f(x),g(x))<\eps$ for each $x\in S$.
\end{proposition}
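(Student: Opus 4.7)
The plan is to use Kirchheim's theorem (Lemma~\ref{lem:kirchheim}) to pass to a biLipschitz ``chart'' on which both the parametrisation of $S$ and the image under $f$ are biLipschitz, and then apply the quantitative openness estimate of Lemma~\ref{lem:converse-dense-subset-any-radius} with constants depending only on $f$.

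First, since $S$ is $n$-rectifiable, Lemma~\ref{lem:kirchheim} provides biLipschitz maps $h_i\colon A_i\to S$ with $A_i\subset\R^n$ Borel, covering $S$ up to an $\Hn$-null set. Because $\Hn(f(S))>0$, there is some $i$ with $\Hn(f(h_i(A_i)))>0$. Applying Lemma~\ref{lem:kirchheim} a second time to the Lipschitz map $f\circ h_i\colon A_i\to Y$ gives Borel subsets $E_j\subset A_i$ on each of which $f\circ h_i$ is biLipschitz, with $\Hn(f(h_i(A_i))\setminus\bigcup_j f(h_i(E_j)))=0$. Choose one such $E=E_j$ with $\mathcal L^n(E)>0$, and fix a common biLipschitz constant $K\ge 1$ that works simultaneously for $h:=h_i|_E$ and for $\varphi:=f\circ h$.

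Let $\eps_2>0$ be the constant from Lemma~\ref{lem:converse-dense-subset-any-radius} associated with Lipschitz constant $LK$ and biLipschitz constant $K$, and by the Lebesgue density theorem pick $x_0\in E$ and $r>0$ with
\[\mathcal L^n(E\cap B(x_0,r))\ge(1-\eps_2)\mathcal L^n(B(x_0,r)).\]
Set $\eps:=\eps_2 r/2$. For any $L$-Lipschitz $g\colon S\to Y$ with $\rho(f(x),g(x))<\eps$ on $S$, the map $\psi:=g\circ h$ is $LK$-Lipschitz on $E\cap B(x_0,r)$ and satisfies, by the triangle inequality,
\[\rho(\psi(y),\psi(z))\geq \rho(\varphi(y),\varphi(z))-2\eps \geq \|y-z\|/K-\eps_2 r\]
for $y,z\in E\cap B(x_0,r)$. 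Lemma~\ref{lem:converse-dense-subset-any-radius} then yields $\Hn(\psi(E\cap B(x_0,r)))\ge r^n/(4K\sqrt n)>0$, hence $\Hn(g(S))>0$.

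The only subtle point is ensuring that $K$, $\eps_2$, and $r$ depend only on $f$ (not on $g$), which is why Kirchheim's theorem is invoked twice: first to parametrise $S$ biLipschitzly, and then to ensure that $f$ itself is biLipschitz on the chosen chart. Once this is in place, the closeness of $g$ to $f$ in the supremum norm on $S$ transfers, via the triangle inequality, to the quantitative biLipschitz lower bound required to apply Lemma~\ref{lem:converse-dense-subset-any-radius} with a positive uniform bound on $\Hn(g(S))$.
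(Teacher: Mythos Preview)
Your proof is correct and follows essentially the same strategy as the paper: apply Kirchheim's lemma twice to produce a Borel set $E\subset\R^n$ of positive measure on which both $h$ and $f\circ h$ are biLipschitz, and then use the quantitative stability estimate from the preceding lemmas. The only difference is in the packaging of the final step. The paper factors through Theorem~\ref{thm:metric-converse}: it sets $Y'=f(h(A))$, observes that $g\circ f^{-1}\colon Y'\to Y$ is an $LM$-Lipschitz map that moves distances by at most $\eps$, and invokes Theorem~\ref{thm:metric-converse} (which internally runs a Vitali argument over many balls). You instead bypass that theorem, select a single Lebesgue density ball $B(x_0,r)$ for $E$, and apply Lemma~\ref{lem:converse-dense-subset-any-radius} directly to $g\circ h$. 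Your route is marginally more self-contained here; the paper's route has the advantage of having already isolated the ``perturbation of a rectifiable set has positive measure'' statement as a standalone theorem.
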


\begin{proof}
  Fix a metric space $(Y,\rho)$ and Lipschitz $f\colon X \to Y$.
  Note that, if $f$ were injective, then any perturbation of $f$ would also induce a perturbation of $f(S)$, so that the previous theorem can be applied.
  If $f$ were biLipschitz, then any Lipschitz perturbation of $f$ would introduce a Lipschitz perturbation of $f(S)$.
  We will prove the Proposition by reducing to this case.

  By Lemma \ref{lem:kirchheim}, there exists a countable number of biLipschitz $h_i\colon A_i\subset \R^n \to S$ with $\Hn(S\setminus \cup_i h_i(A_i))=0$.
  Since $\Hn(f(S))>0$, there exists some $A_i$ with $\Hn(f(h_i(A_i)))>0$.
  Moreover, by applying Lemma \ref{lem:kirchheim} to $f\circ h_i$, there exists some $A\subset A_i$ of positive measure on which $f\circ h_i$ is biLipschitz.
  In particular, $f$ is $M$-biLipschitz on $h_i(A)$ for some $M\geq 1$.
  Let $Y'=f(h_i(A))$.

  Now fix $L >0$.
  By Theorem \ref{thm:metric-converse}, there exists an $\eps>0$ such that $\Hn(\p(Y'))>0$ for each $LM$-Lipschitz $\p\colon Y' \to Y$ with $|\rho(x,y)-\rho(\p(x),\p(y))|<\eps$ for each $x,y\in Y'$.
  Notice that, if $g\colon S \to Y$ is $L$-Lipschitz with
\begin{equation}\label{eq:g-close-to-f}\rho(f(x),g(x))<\eps/2 \quad \text{for each } x\in S,\end{equation}
  then $\p := g\circ f^{-1} \colon Y' \to Y$ is $LM$-Lipschitz and
  \begin{align*}|\rho(w,z)-\rho(\p(w),\p(z))| &= |\rho(f(f^{-1}(w)),f(f^{-1}(z))) - \rho(g(f^{-1}(w)), g(f^{-1}(z)))| \\
                                              \begin{split}&\leq
                                                |\rho(f(f^{-1}(w)),f(f^{-1}(z))) - \rho(f(f^{-1}(z)), g(f^{-1}(w)))|\\ &\ +  |\rho(f(f^{-1}(z)),g(f^{-1}(w))) - \rho(g(f^{-1}(w)), g(f^{-1}(z)))|
                                              \end{split}\\
                                              &\leq \rho(f(f^{-1}(w)), g(f^{-1}(w))) + \rho(f(f^{-1}(z)),g(f^{-1}(z)))\\
                                              &\leq \eps/2 + \eps/2=\eps,
  \end{align*}
  using the reverse triangle inequality for the penultimate inequality and \eqref{eq:g-close-to-f} for the final inequality.
  Therefore, we may apply the conclusion of the previous theorem to $\p$ to see that $\Hn(\p(Y'))>0$.
  Since $\p(Y')=g(f^{-1}(Y'))=g(h_i(A)) \subset g(S)$, we have $\Hn(g(S))>0$, as required.

\end{proof}

\subsection{The set is dense}

We now prove that the set in Theorem \ref{thm:positive-measure-is-open-dense} is dense.
The main step is to prove that we can perturb any Lipschitz function between two Euclidean spaces to have positive measure image.
Recall that the set of invertible linear functions is a dense open subset of all linear functions $\R^n \to \R^n$.
Moreover, $T\mapsto \|T^{-1}\|$ is continuous on this set.
The main step follows naturally by modifying a Lipschitz function around a point of differentiability, in such a way that the derivative of the modified function is invertible.
This leads to the required result.
\begin{lemma}
  \label{lem:basic-perturb-positive-image}
  Let $A\subset \R^n$ be a Borel set with positive measure, $m\geq n$ and $f\colon A \to \R^m$ Lipschitz.
  For any $\eps>0$ there exists a Lipschitz $T^*\colon \R^n \to \R^m$ with
  \[\Lip T^*,\ \|T^*\|_\infty<\eps\]
  such that
  \begin{equation*}
    f^* := f + T^*
  \end{equation*}
  has $\Hn(f^*(A)) >0$.
\end{lemma}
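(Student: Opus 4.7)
The plan is to perturb $f$ at a single well-chosen density point by a small, spatially localised linear map chosen so that the derivative of $f^*$ at this point becomes injective; Lemma \ref{lem:converse-dense-subset-any-radius} will then upgrade this infinitesimal information into a positive-measure image.

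First I would extend $f$ to a Lipschitz $\tilde f \colon \R^n \to \R^m$ (componentwise McShane, or Kirszbraun) and, combining Rademacher's theorem with the Lebesgue density theorem, pick $x_0 \in A$ that is simultaneously a density point of $A$ and a point of differentiability of $\tilde f$; write $D := D\tilde f(x_0)$. Because $m \geq n$, the set of rank-$n$ (i.e.\ injective) linear maps is open and dense in $\mathrm{Hom}(\R^n,\R^m)$, so for any $\delta>0$ I can choose a linear $L$ with $\|L\| < \delta$ such that $D + L$ is injective. Fix a left inverse $\pi \colon \R^m \to \R^n$ with $\pi \circ (D+L) = I_{\R^n}$; note that $\|\pi\|$ depends on $\sigma_{\min}(D+L)$ but is finite.

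Next I would localise $L$ by setting
\begin{equation*}
T^*(x) := \phi_r(x)\, L(x-x_0),
\end{equation*}
where $\phi_r \colon \R^n \to [0,1]$ is a Lipschitz cutoff equal to $1$ on $B(x_0, r/2)$, vanishing outside $B(x_0, r)$, with $\Lip \phi_r \leq 4/r$. A direct product estimate yields $\|T^*\|_\infty \leq r\|L\|$ and $\Lip T^* \leq 5\|L\|$. Choosing first $\delta < \eps/5$ and then $r < \eps/\delta$ forces both $\Lip T^*$ and $\|T^*\|_\infty$ below $\eps$, as required.

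For the final step, define $f^* := f + T^*|_A$. On $B(x_0, r/2) \cap A$ this agrees with $\tilde f + L(\cdot - x_0)$, so $\pi \circ f^*$ is Lipschitz and
\begin{equation*}
\pi(f^*(x)) - \pi(f^*(x_0)) \;=\; (x-x_0) + o(\|x-x_0\|)
\quad \text{as } x \to x_0.
\end{equation*}
Hence for any $\eta>0$ there is $s \in (0, r/2)$ with $\|\pi f^*(x) - \pi f^*(y)\| \geq \|x-y\| - 2\eta s$ for all $x,y \in B(x_0,s)$; shrinking $s$ further using the density of $A$ at $x_0$ also secures $\mathcal L^n(A \cap B(x_0,s)) \geq (1-\eta)\mathcal L^n(B(x_0,s))$. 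For $\eta$ small enough relative to the constant $\eps_2$ of Lemma \ref{lem:converse-dense-subset-any-radius} (applied with $K=1$ and Lipschitz constant $\|\pi\| \Lip f^*$), that lemma gives $\Hn(\pi \circ f^*(A \cap B(x_0,s))) > 0$, and since $\pi$ is Lipschitz this forces $\Hn(f^*(A)) > 0$.

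The only real obstacle is the coordinated choice of parameters ($\delta$, $r$, $\eta$, $s$); the conceptual ingredients — Rademacher, Lebesgue density, the open-and-dense locus of rank-$n$ maps (this is where $m \geq n$ is used), and the already-proved Lemma \ref{lem:converse-dense-subset-any-radius} — are all in hand.
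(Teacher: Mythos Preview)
Your argument is correct, but it genuinely diverges from the paper's approach. The paper does not invoke Lemma~\ref{lem:converse-dense-subset-any-radius}; instead it uses a Lusin-type reduction to find a positive-measure set $A''$ on which $Df$ is continuous and on which the first-order Taylor error is uniformly controlled, then shows that the linearly perturbed map $\tilde f = f + (S - Df(x_0))$ is honestly biLipschitz on $A'' \cap B(x_0,R)$ (because $Df(x)+T$ stays uniformly invertible for $x$ near $x_0$). Your route is leaner: you only use differentiability at a \emph{single} point together with Lebesgue density, and you outsource the passage from ``approximate isometry on a high-density subset of a ball'' to ``positive-measure image'' to the already-proved Lemma~\ref{lem:converse-dense-subset-any-radius}. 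This buys you a shorter argument that avoids the Lusin/continuity step entirely; the paper's argument, by contrast, is self-contained (it does not appeal back to Section~\ref{sec:s-is-open}) and yields the slightly stronger local conclusion that $f^*$ is biLipschitz on a set of positive measure rather than merely having positive-measure image.
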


\begin{proof}
  Since $f$ is Lipschitz, its derivative $Df(x)$ exists for almost every $x\in A$.
  Moreover, standard measure theoretic techniques show that $Df$ is a Borel function defined on a full measure Borel subset of $A$.
  Thus, there exists a $A'\subset A$ of positive measure on which $Df$ is continuous.
  Further, standard techniques also show that, for any $\eps>0$, the function $R_\eps(x)$ defined to be the greatest $R$ such that
  \begin{equation}
    \label{eq:derivative-control}
    \|f(y)-f(x) -Df(x)(y-x)\|<\eps \|y-x\| \quad \forall y\in B(x,R)
  \end{equation}
  is also Borel.
  Thus, there exists $A'' \subset A'$ of positive measure and, for every $\eps>0$, a $R_\eps>0$ such that $R_\eps < R(x)$ for each $x\in A''$.
  We let $x_0$ be a density point of $A''$.

  Since $m\geq n$, there exists an $n$-dimensional subspace $W\leq \R^m$ that contains the image of $Df(x_0)$.
  Given $\eps>0$, there exists an invertible linear $S\colon \R^n \to W$ with $\|Df(x_0) - S\|<\eps$.
  Moreover, there exists a $\delta>0$ and $M\in \N$ such that $\|L^{-1}\|\leq M$ for each $L\in B(S,\delta)$.
  We let $T = S - Df(x_0)$ and $\tilde f=f+T$.
  Note that $\Lip T<\eps$.

  Since $Df$ is continuous on $A''$, there exists an $R_*>0$ such that $\|Df(x)-Df(x_0)\|<\delta$ whenever $\|x-x_0\|<R_*$.
  In particular, this implies that
  \begin{equation*}
    \|S-(T+Df(x))\| = \| Df(x) - Df(x_0)\| <\delta,
  \end{equation*}
  so that $T+Df(x)$ is invertible with $\|(T+Df(x))^{-1}\| \leq M$.
  That is,
  \begin{equation}
    \label{eq:norm-inverse-control}
    \|y-x\| \leq M \| (Df(x)+T)(y-x) \| \quad \forall y\in \R^n,\ x\in A'' \cap B(x_0, R_*).
  \end{equation}
  Moreover, if $x\in A''$ and $\|y-x\|<R_{1/2M}$, then by \eqref{eq:derivative-control},
  \begin{align*}
    \| \tilde f(y)-\tilde f(x) - (Df(x)+T)(y-x) \| &= \|f(y)-f(x) - Df(x)(y-x) \| \\
                                         &\leq \|y-x\|/2M.
  \end{align*}
  Thus, by the reverse triangle inequality and \eqref{eq:norm-inverse-control},
  \begin{equation*}
    \|\tilde f(y)-\tilde f(x)\| \geq \|y-x\|/2M
  \end{equation*}
  whenever $y\in \R^n$ and $x\in A'' \cap B(x_0, R)$ for $R=\min\{R_*, R_{1/2m}\}$.
  That is, $\tilde f$ is biLipschitz on $A''\cap B(x_0,R)$.
  Since $x_0$ is a density point of $A''$, $A''\cap B(x_0,r)$ has positive measure for each $0<r<R$ and hence so does $\tilde f(A'' \cap B(x_0,r))$.

  Finally, we define
  \[
  T^*(x)=\begin{cases}T(x-x_0) & \text{if } \|x-x_0\|\leq 1 \\
  \frac{T(x-x_0)}{\|x-x_0\|} & \text{otherwise.}\end{cases}
  \]
  Then $\Lip T^* \leq \Lip T <\eps$, $\|T^*(x)\| < \eps$ for all $x\in \R^n$ and $T^*=T$ on $B(x_0,1)$.
  Thus, if $f^*=f+T^*$, $\Hn(f^*(A))>0$, as required.
\end{proof}

To apply this in the metric case, we apply the results of Kirchheim.
\begin{proposition}
  \label{prop:expanding-measure-is-dense}
  Let $S\subset X$ be $n$-rectifiable with $\Hn(S)>0$.
  Suppose that $V$ is a finite dimensional Banach space with $\dim V \geq n$ and $L >0$.
  Then for any $f\in \Lip(X,V,L)$ and any $\eps>0$, there exists a $g\in \Lip(X,V,L)$ with $\|f-g\|<\eps$ such that $\Hn(g(S))>0$.
\end{proposition}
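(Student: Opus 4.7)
The idea is to transfer the Euclidean perturbation provided by Lemma \ref{lem:basic-perturb-positive-image} back to $X$ via a biLipschitz parameterisation of a positive-measure piece of $S$, while keeping the Lipschitz constant at most $L$ by first shrinking $f$ slightly.

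By Lemma \ref{lem:kirchheim} there exist a compact $A\subset\R^n$ with $\mathcal{L}^n(A)>0$ and a $K$-biLipschitz map $h\colon A\to S$ for some $K\geq 1$. Using Lemma \ref{lem:remove-small-increase-in-lipschitz} I first replace $f$ by some $f'\in\Lip(X,V,L-\delta)$ with $\|f-f'\|<\eps/2$ for a suitable $\delta>0$. Identifying $V$ with $(\R^m,\|\cdot\|_V)$ for $m=\dim V\geq n$, the composition $f'\circ h\colon A\to V$ is Lipschitz, so Lemma \ref{lem:basic-perturb-positive-image} (whose proof rests only on Rademacher differentiability and the invertibility of a perturbed linear map, hence is unaffected by passing from the Euclidean norm to an equivalent one on $\R^m$) yields, for any $\eta>0$, a Lipschitz $T^*\colon\R^n\to V$ with $\Lip T^*<\eta$, $\|T^*\|_\infty<\eta$ and $\Hn\bigl((f'\circ h+T^*)(A)\bigr)>0$.

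Next I need to transfer $T^*$ to $X$. Set $u:=T^*\circ h^{-1}\colon h(A)\to V$, so that $\Lip u\leq K\eta$ and $\|u\|_\infty\leq\eta$. Since $V$ is finite dimensional, a standard Lipschitz extension (for instance: pass to $(\R^m,\|\cdot\|_\infty)$ via a linear isomorphism whose norm and inverse norm are bounded by a constant depending only on $V$, extend each coordinate by the McShane formula, truncate each coordinate in absolute value to preserve the sup norm, and map back) produces $\tilde u\colon X\to V$ with $\tilde u|_{h(A)}=u$, $\Lip\tilde u\leq C_V K\eta$ and $\|\tilde u\|_\infty\leq C_V\eta$ for some constant $C_V$ depending only on $V$. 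Choosing $\eta>0$ so small that $C_V K\eta\leq\delta$ and $C_V\eta<\eps/2$, the function $g:=f'+\tilde u$ satisfies $\Lip g\leq(L-\delta)+\delta=L$, so $g\in\Lip(X,V,L)$, and $\|g-f\|\leq\|\tilde u\|_\infty+\|f'-f\|<\eps$. For every $a\in A$ one has $g(h(a))=f'(h(a))+T^*(a)$, so $g(h(A))=(f'\circ h+T^*)(A)$ has positive $\Hn$ measure and a fortiori $\Hn(g(S))>0$.

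The main obstacle is the simultaneous control of $\Lip\tilde u$ and $\|\tilde u\|_\infty$ by constants depending only on $V$, so that both the Lipschitz budget $\delta$ freed by the preliminary scaling and the sup-norm budget $\eps/2$ can be respected at once by a single choice of $\eta$. Once this routine extension step is in place, the rest of the argument is a direct assembly of Kirchheim's parameterisation, the Euclidean differentiation-based perturbation, and the scaling lemma.
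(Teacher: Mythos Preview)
Your proof is correct and follows the same strategy as the paper: shrink $f$ via Lemma~\ref{lem:remove-small-increase-in-lipschitz}, take a biLipschitz chart $h\colon A\to S$ from Lemma~\ref{lem:kirchheim}, apply Lemma~\ref{lem:basic-perturb-positive-image} to $f'\circ h$, and transfer $T^*$ back to $X$. The only (cosmetic) difference is in the transfer step: the paper first extends $h^{-1}$ to a Lipschitz map $X\to\R^n$ and then sets $g=\tilde f+T^*\circ h^{-1}$, so the sup-norm bound $\|T^*\circ h^{-1}\|_\infty\leq\|T^*\|_\infty$ comes for free and no extension constant $C_V$ or truncation is needed.
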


\begin{proof}
  First note that it suffices to prove the result for $V=\R^m$ for some $m \geq n$, since the result is invariant under biLipschitz mappings of $V$.
  This allows us to apply the previous lemma.

  By Lemma \ref{lem:remove-small-increase-in-lipschitz}, there exists a $\delta>0$ and a $\tilde f \in \Lip(X,\R^m,L-\delta)$ with $\|f-\tilde f\|<\eps/2$.
  By Lemma \ref{lem:kirchheim}, there exists a biLipschitz $h\colon A\subset \R^n \to S$ with $\mathcal L^n(A)>0$.
  We extend $h^{-1}$ to a Lipschitz function $h^{-1} \colon X \to \R^n$.
  Finally, by applying Lemma \ref{lem:basic-perturb-positive-image} to $\tilde f\circ h^{-1} \colon A \to \R^m$, we see that there is a Lipschitz $T^* \colon \R^n \to V$ with $\Lip T^* <\delta/\Lip h^{-1}$ and $\|T^*\|_\infty <\eps/2$ such that $f^*:= \tilde f + T^*$ has $\Hn(f^*(A))>0$.

  We claim that $g:=f^*\circ h^{-1}$ is the required function.
  Certainly $g(S)\supset f^*(A)$, so that $\Hn(g(S))>0$.
  Also note that for any $x\in X$,
  \begin{equation*}
    \|g(x)-f(x)\| \leq \|g(x)-\tilde f(x)\| + \|\tilde f(x) -f(x)\| < \|T^*(x)\| + \eps/2 \leq \eps.
  \end{equation*}
  Therefore, $\|g-f\| <\eps$.
  Finally, $\Lip f^* \leq \Lip f + \Lip T^* \leq L -\delta + \delta$, so that $g\in \Lip(X,\R^m,L)$, as required.
\end{proof}

The previous proposition completes the proof of Theorem \ref{thm:positive-measure-is-open-dense}.

We may also deduce the following topological consequence of our perturbation results.
Note that in Euclidean space, this can be deduced using the Besicovitch-Federer projection theorem in place of our perturbation theorem.
Recall that $\B$ is the unit ball of $\R^{n}$.
\begin{theorem}
  \label{thm:topological-consequences}
  Let $f\colon \mathbb B\to X$ continuous and biLipschitz on $\partial \mathbb B$.
  Suppose that there exists a countable Borel decomposition $f(\mathbb B)= \cup_i X_i$ such that each $X_i$ satisfies \eqref{eq:lower-density} and $\Hn(X_i)<\infty$.
  Then $f(\mathbb B)$ contains an $n$-rectifiable subset of positive $\Hn$ measure.
  That is, $f(\mathbb B)$ is not purely $n$-unrectifiable.

  If $n=1$ then this is true for any $f(\B)$ with $\sigma$-finite $\Ho$ measure.
\end{theorem}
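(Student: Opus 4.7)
The plan is to proceed by contradiction. Suppose $f(\B)$ is purely $n$-unrectifiable. The decomposition hypothesis is exactly what Theorem \ref{thm:residual-euclidean-target} (in the first statement) or Theorem \ref{thm:residual-domain-1pu} (for the second, with $n=1$ and target $\R$) requires, so for every $L>0$ the set
\[R_L := \{g \in \Lip(X,\R^n,L) : \Hn(g(f(\B))) = 0\}\]
is residual, and hence dense by Baire, in the complete metric space $\Lip(X,\R^n,L)$.

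To oppose this, I would construct a ``reference'' Lipschitz function $g_0\colon X \to \R^n$ whose composition with $f$ is the identity on $\partial\B$. Because $f|_{\partial\B}$ is biLipschitz, the inverse $(f|_{\partial\B})^{-1}\colon f(\partial\B) \to \partial\B$ is Lipschitz, and I would extend it to $X$ via McShane's theorem coordinatewise, then post-compose with the $1$-Lipschitz radial retraction of $\R^n$ onto $\B$. Since $(f|_{\partial\B})^{-1}$ already takes values in $\partial\B$, the retraction does not disturb it on $f(\partial\B)$ but guarantees boundedness. Setting $L := \Lip g_0$ places $g_0 \in \Lip(X,\R^n,L)$ with $g_0 \circ f|_{\partial\B} = \id_{\partial\B}$.

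With $L$ fixed, I would invoke density of $R_L$ to choose $g\in R_L$ with $\|g-g_0\|_\infty < \delta$ for $\delta>0$ to be specified. The composition $h := g \circ f\colon \B \to \R^n$ is continuous, and for $x,y\in\partial\B$ the bounds
\[\|x-y\| - 2\delta \leq \|h(x)-h(y)\| \leq L\cdot\Lip(f|_{\partial\B}) \|x-y\|\]
follow from the triangle inequality together with $g_0\circ f|_{\partial\B}=\id$ and the Lipschitz property of $g$ and $f|_{\partial\B}$. These are precisely the hypotheses of Lemma \ref{lem:converse-defined-on-ball} with $K=1$ and $\eps=2\delta$. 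Choosing $\delta$ less than half the threshold $\eps_1$ supplied by that lemma (for the parameters $K=1$ and $L\cdot\Lip(f|_{\partial\B})$) yields $\Hn(h(\B)) \geq 1/(2\sqrt n) > 0$, contradicting $g\in R_L$.

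The main obstacle is conceptual rather than computational: the residual perturbation theorem forces the image measure down to zero, whereas the topological rigidity Lemma \ref{lem:converse-defined-on-ball} forces it to stay above a fixed positive threshold, and one must make both applicable to the same $g$. This requires the Banach space $\Lip(X,\R^n,L)$ to contain both the reference $g_0$ (which quantitatively pins the boundary behaviour to be biLipschitz) and the perturbation $g$ (whose closeness to $g_0$ preserves the biLipschitz bounds on $\partial\B$ up to an additive $O(\delta)$). Fixing $L=\Lip g_0$ before invoking density is what makes this bridge work. The $n=1$ case of the second statement proceeds identically once Theorem \ref{thm:residual-domain-1pu} replaces Theorem \ref{thm:residual-euclidean-target}.
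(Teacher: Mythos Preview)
Your proof is correct and follows essentially the same route as the paper's: extend $(f|_{\partial\B})^{-1}$ to a Lipschitz map, use density of the measure-zero functions (from Theorem~\ref{thm:residual-euclidean-target} or~\ref{thm:residual-domain-1pu}) to find a nearby perturbation, and derive a topological contradiction. The only cosmetic differences are that the paper works in $\Lip(f(\B),\R^n,L)$ rather than $\Lip(X,\R^n,L)$ (avoiding the need for the radial retraction, since $f(\B)$ is compact), and that it invokes Lemma~\ref{lem:degree} directly instead of passing through Lemma~\ref{lem:converse-defined-on-ball}; since the latter is proved via the former, this is merely a different packaging of the same topological obstruction.
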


\begin{proof}
  Consider $g:= f^{-1}|_{f(\partial \mathbb B)} : \partial \mathbb B \to \partial \mathbb B$.
  This is a Lipschitz function and so may be extended to a Lipschitz function $g \colon f(\mathbb B) \to \R^n$.
  Since $f(\mathbb B)$ is compact, $g$ is bounded and so $g\in \Lip (f(\mathbb B), \R^n,L)$ for some $L >0$.

  Suppose that $f(\mathbb B)$ is purely $n$-unrectifiable.
  Since each $X_i$ satisfies \eqref{eq:lower-density} and $\Hn(X_i)<\infty$, we may apply Theorem \ref{thm:residual-euclidean-target} to get a $h \in \Lip(f(\mathbb B), \R^n,L)$ with $\|g-h\|<1/4$ and $\mathcal L^n(h(f(\mathbb B)))=0$.
  In particular, $h(f(\mathbb B)) \not\supset B(0,1/10)$.
  However, for any $x\in \partial \mathbb B$,
  \begin{align*}
    \|h(f(x))-x\| &\leq \|h(f(x)) - g(f(x))\| + \|g(f(x))- x\|\\
    &\leq \|h-g\| + \|f^{-1}(f(x))-x\| < 1/4.
  \end{align*}
  Thus, we obtain a contradiction of Lemma \ref{lem:degree}.

  If $n=1$, then we may apply Theorem \ref{thm:residual-domain-1pu} instead of Theorem \ref{thm:residual-euclidean-target} to deduce the same conclusion without assuming each $X_i$ satisfies \eqref{eq:lower-density}.
\end{proof}

\begin{remark}
  As previously, by using the contents of Remark \ref{rmk:lower-density-unnec2}, we may remove the lower density assumption \eqref{eq:lower-density} from the hypotheses of the previous theorem.
\end{remark}

\bibliographystyle{abbrvurl}
\bibliography{references}
\end{document}